\numberwithin{equation}{section}
\newtheorem*{introthm*}{Main Result}
\newtheorem{theorem}{Theorem}[section]
\newtheorem{lemma}[theorem]{Lemma}
\newtheorem{corollary}[theorem]{Corollary}
\theoremstyle{definition}
\newtheorem{definition}[theorem]{Definition}
\newtheorem{conjecture}[theorem]{Conjecture}
\newtheorem{def-prop}[theorem]{Definition-Proposition}
\newtheorem{remark}[theorem]{Remark}
\DeclareMathOperator{\reg}{reg}
\DeclareMathOperator{\depth}{depth}
\DeclareMathOperator{\Ass}{Ass}
\DeclareMathOperator{\HF}{HF}
\DeclareMathOperator{\HP}{HP}
\renewcommand{\AA}{{\mathbb A}}
\newcommand{\PP}{{\mathbb P}}
\newcommand{\ZZ}{{\mathbb Z}}
\newcommand{\NN}{{\mathbb N}}
\newcommand{\XX}{{\mathbb X}}
\newcommand{\YY}{{\mathbb Y}}
\newcommand{\kk}{{\mathbbm k}}
\def\mm{{\frak m}}
\def\pp{{\frak p}}
\def\a{{\bf a}}
\def\p{{\bf p}}
\def\z{{\bf z}}
\def\ahat{\widehat{\alpha}}
\def\1{{\bf 1}}
\def\0{{\bf 0}}
\begin{document}

\title{Lower bounds for Waldschmidt constants and Demailly's Conjecture for general and very general points}

\author{Sankhaneel Bisui}
\address{Arizona State University \\ School of Mathematical and Statistical Sciences \\
	Tempe, AZ 85287-1804, USA}
\email{sankhaneel.bisui@asu.edu, sbisui@tulane.edu} 

\author{Th\'ai Th\`anh Nguy$\tilde{\text{\^E}}$n}
\address{University of Dayton, Department of Mathematics,
	300 College Park, Dayton, Ohio, USA \\
	and University of Education, Hue University, 34 Le Loi, Hue, Vietnam}
\email{tnguyen5@udayton.edu}

\keywords{Demailly's Conjecture, Cremona Transformation, Waldschmidt Constant, Ideals of Points, Symbolic Powers, Containment Problem, Stable Harbourne-Huneke Conjecture}
\subjclass[2010]{14N20, 13F20, 14C20}

\begin{abstract}
We prove Demailly's Conjecture concerning the lower bound for the Waldschmidt constant in terms of the initial degree of the second symbolic powers for any set of  very general points in $\PP^N$. We also discuss the Harbourne-Huneke Containment and the aforementioned Demailly's Conjecture for general points and show the results for sufficiently many general points and general points in projective spaces with low dimensions. 
\end{abstract}
\maketitle

\section{Introduction}
\label{sec.intro}
Given a set of $s$ reduced points  $\XX=\{P_1, \dots, P_s\}$ in $\PP^N_\kk$, over an algebraically closed field with coordinate ring $R=\kk[\PP^N_\kk]$, one of the fundamental questions asks about the least degree of a homogeneous polynomial in $R$ vanishing at $\XX$ with  multiplicity at least $m$. If  $I_\XX$ denotes the ideal defining $\XX$, then by Zariski-Nagata theorem, the $m$-th symbolic power of $I_\XX$, defined as $I^{(m)}=\cap_{i=1}^s\p_i^m$, where $\p_i$ is the ideal defining the point $P_i$, consists of  all homogeneous polynomials vanishing at $\XX$ with multiplicity at least $m$. Let $\alpha(I)$ be the least degree of elements in $I$ (or initial degree of $I$). The Waldschmidt constant of $I$ is defined as $\ahat(I)= {\displaystyle \lim_{m\to 0}}\frac{\alpha(I^{(m)})}{m}$. 
Demailly proposed the following lower bound for the Waldschmidt constant of the defining ideal of points $I$ which simultaneously gives lower bounds for all $\alpha(I^{(m)})$ (see Definition \ref{def.waldconst}).
\begin{conjecture}\cite{Demailly1982}\label{conjecture.Demailly}
If  $I_\XX$  is the defining ideal of $\XX=\{P_1,  \dots, P_s\} \subset \mathbb{P}^N_{\mathbb{C}} $, then
	$${\ahat(I_\XX)} \geqslant \dfrac{\alpha(I_\XX^{(m)})+N-1}{m+N-1}, \text{ for all } m \geqslant 1.$$
\end{conjecture}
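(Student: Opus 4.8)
The plan is to deduce Demailly's inequality at a fixed level $m$ from a \emph{stable} containment between symbolic and ordinary powers of $I=I_\XX$. Since $t\mapsto\alpha(I^{(t)})$ is subadditive ($I^{(a)}I^{(b)}\subseteq I^{(a+b)}$), Fekete's lemma gives $\ahat(I)=\lim_{t\to\infty}\alpha(I^{(t)})/t=\inf_t\alpha(I^{(t)})/t$, so it is enough to produce, for infinitely many $t$, a containment
\[
I^{((m+N-1)t)}\ \subseteq\ \mm^{(N-1)(t-1)}\bigl(I^{(m)}\bigr)^{t},
\]
$\mm\subset R$ being the irrelevant ideal. Indeed, comparing initial degrees gives $\alpha\bigl(I^{((m+N-1)t)}\bigr)\ge (N-1)(t-1)+t\,\alpha(I^{(m)})$, hence $\tfrac{\alpha(I^{((m+N-1)t)})}{(m+N-1)t}\to\ahat(I)$ while staying $\ge\tfrac{\alpha(I^{(m)})+N-1}{m+N-1}$ in the limit, which is exactly the assertion of the conjecture. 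For $m=1$ this is a (stable) Harbourne--Huneke-type containment $I^{(Nt)}\subseteq\mm^{(N-1)(t-1)}I^{t}$, and the same computation then returns Chudnovsky's conjecture; thus the base case $m=1$ and all higher levels are treated uniformly.

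For a generic point set --- over $\kk$ with $Ns$ indeterminates adjoined --- I would prove the displayed containment by feeding genericity into the Ein--Lazarsfeld--Smith / Hochster--Huneke machinery. Since $I^{(m)}$ still has big height $N$, the ELS/HH theorem already gives $I^{(mNt)}=(I^{(m)})^{(Nt)}\subseteq (I^{(m)})^{t}$; what must be gained is the sharper left exponent $(m+N-1)t$ in place of $mNt$ and the weight $\mm^{(N-1)(t-1)}$. The degree room for this should come from the fact that for generic points $\alpha(I^{(k)})$ is as large as possible and, by Chudnovsky's conjecture --- known for generic points --- satisfies $\alpha(I^{(k)})\ge k\cdot\frac{\alpha(I)+N-1}{N}$, together with the upper estimate $\alpha(I^{(m)})\le\min\{d:\binom{N+d}{N}>s\binom{N+m-1}{N}\}$ coming from the fact that a generic linear system of fat points has at least its expected dimension. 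A workable route to the full $\mm$-weighted stable containment is to specialize from a star configuration or a suitable monomial-type configuration, where Harbourne--Huneke containments are understood combinatorially, and then deform back to the generic point. I expect producing this $\mm$-weighted stable containment (for all, or cofinally many, $t$) to be the principal obstacle.

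Very general points then come for free: $\alpha(I^{(k)})$ is lower semicontinuous on $(\PP^N)^{s}$, so its generic value is its maximum, and a very general configuration attains this maximum for every $k$ simultaneously; hence $\ahat(I)$ and each $\alpha(I^{(m)})$ agree with the generic values and the inequality transfers verbatim. The genuinely harder situation is an arbitrary \emph{general} configuration: only finitely many of the relevant open conditions can be imposed at once, so for large $m$ the value $\alpha(I^{(m)})$ can drop below the generic one and $\ahat(I)$ is no longer pinned down, leaving both sides of Demailly's inequality uncertain. Here I would retain Chudnovsky's conjecture (known for general points, so $m=1$ is settled) and control $\alpha(I^{(m)})$ for $m\ge2$ via Cremona transformations based at $N+1$ of the points: a quadratic Cremona map carries $|dH-m\sum_iP_i|$ to a system of strictly smaller degree with a controlled change of multiplicities, so iterating reduces the estimate of $\alpha(I^{(m)})$ to the degree-$1$ datum $\alpha(I)$ and lower-level information. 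Making this reduction terminate and stay effective for all $m$ and every general configuration is the crux; I expect it to go through only when $s$ is large (systems far from special, Cremona orbit controlled) or when $N\in\{2,3\}$ (classical plane and space Cremona calculus available), which is precisely the scope announced in the abstract.
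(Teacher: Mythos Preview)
The statement you are addressing is a \emph{conjecture}; the paper does not prove it. The paper establishes only the case $m=2$, and only for (i) any set of very general points, (ii) at least $2^N$ general points, and (iii) general points in $\PP^3,\PP^4,\PP^5$ with four exceptions. Your write-up is explicitly a research plan rather than a proof: you say the stable containment ``should come from'' certain degree estimates, that you ``expect producing this $\mm$-weighted stable containment\dots\ to be the principal obstacle,'' and that the Cremona reduction is something you ``expect\dots\ to go through.'' None of these steps is actually carried out, and the central containment $I^{((m+N-1)t)}\subseteq \mm^{(N-1)(t-1)}(I^{(m)})^t$ is itself a Harbourne--Huneke-type statement that is open in general. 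The specialization-from-star-configurations idea is not a mechanism that is known to transfer such containments to generic points.

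That said, your high-level architecture for \emph{general} points --- prove a stable containment for generic points, then specialize --- is exactly what the paper does for $m=2$ (Theorem~\ref{thm.neededineq} and Lemma~\ref{lem.Demaillygeneral}, following \cite{bghn2022demailly}). The paper's key insight, which you are missing, is how to \emph{verify} the containment: one shows it suffices to prove $\ahat(I)\geqslant(\reg(I^{(2)})+N-1)/(N+1)$, bounds $\reg(I^{(2)})$ via Alexander--Hirschowitz, and then checks the resulting inequality $\ahat(I)\geqslant(N+\ell+1)/(N+1)$ case by case using Cremona reductions (Lemmas~\ref{lemma: reduction of multiplicity of points}, \ref{lemma: adding multiplicities Dumnicki}) and the Waldschmidt-decomposition gluing of \cite{localeffectivity}. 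For \emph{very general} points the paper does not pass through any containment at all: it bounds $\alpha(I^{(2)})$ directly by Alexander--Hirschowitz and proves $\ahat(I)\geqslant(N+\ell)/(N+1)$ by induction on $N$, splitting the point set and applying Lemma~\ref{lemma: Waldschmidtdecomp}. Your semicontinuity remark is correct in spirit but does not replace this work, since the generic case itself must still be established.
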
 
The above inequality is sharp: it becomes equality when, for instance, $I$ is a complete intersection. It is also asymptotically an equality (when $m\rightarrow \infty$).  For $m=1$, Conjecture \ref{conjecture.Demailly} is known as the Chudnovsky's conjecture \cite{Chudnovsky1981} and is well studied, see, \cite{Chudnovsky1981, HaHu, dumnicki2012symbolic, Dumnicki2015,  BoH, GHM2013, FMX2018, DTG2017, FMX2018, bghn2021chudnovskys,SankhoThaiChudnovsky}.  
\par 
Esnault and Viehweg \cite{EsnaultViehweg} verified Conjecture \ref{conjecture.Demailly} for $N = 2$.  Bocci, Cooper, and Harbourne \cite{BCH2014} provided a more elementary proof using ideal containment for $s=n^2$ general points, and for at least $(m+1)^2-1$ general points for any fixed $m$ in $\PP^2$. In dimesion $N\geqslant 3$, the conjecture is wide open.
Malara, Szemberg, and Szpond \cite{MSS2018} proved that for any fixed integer $m$, any set of at least $(m+1)^N$ very general points satisfies Demailly's conjectural inequality. This was later improved to at least $m^N$ very general points by the work of Dumnicki, Szemberg, and Szpond \cite{localeffectivity}. Trok and Nagel \cite{NagelTrokInterpolation} proved that Conjecture \ref{conjecture.Demailly} holds for at most $N+2$ and specific cases of $N+3$ general points. Chang and Jow \cite{chang2020demailly} verified the conjecture for $s=k^N$ general points, where $k\in \NN$. The authors along with Grifo and Hà \cite{bghn2022demailly} proved that for any fixed $m\in \NN$, any set of at least $(2m+2)^N$ general points verifies Demailly's conjecture. It was also shown to be true for special configurations of points including the star configurations of points \cite{MSS2018, bghn2022demailly}, (a more elementary proof for) the Fermat configurations and other configurations of points in $\PP^2$ \cite{Ng23b}, and even beyond the points setting including determinantal ideals \cite{bghn2022demailly}, star configurations of hypersurfaces \cite{bghn2022demailly} (also known as the uniform $a-$fold product ideal, see, Lin and Shen \cite{LinShen22}), and special configurations of lines \cite{Ng23a}. \par
Besides classical applications in complex analysis, see for instance \cite{Waldschmidt,Moreau1980}, Demailly's bound was shown to be beneficial for the study of the ideal containment problem \cite{YuXieStefan}, as well as computing the values $\alpha(I_\XX^{(m)})$ for special configurations \cite{Ng23a,Ng23b}. Demailly's bound also gives rise to upper bounds for the regularity values of a quotient by ideals generated by powers of linear forms \cite{DNS23}, which is useful to study dimensions of certain spline spaces \cite{DiPVill21}.

 We briefly recall the descriptions of  very general and general points. We follow the exposition from the work of Fouli, Mantero, and Xie \cite[Page 4]{FMX2018}.  A set of points $\XX=\{P_1, \dots, P_s\}$ is an element of the Chow variety of algebraic $0$-cycles of degree $s$ in $\PP^N$ (or an element in the Hilbert scheme of $s$ points in $\PP^N$). A property $\mathcal{P}$ holds for $s$ \emph{very general} set of points in $\PP^N_\kk$, if there exist countable infinite  open dense subsets $U_t$, $t \in \NN$, of  the Chow variety of algebraic $0$-cycles of degree $s$  such that the property $\mathcal{P}$ holds for all $\XX \in \bigcap_{t=1}^\infty U_t$. If there exists one open dense subset $U$ such that the property $\mathcal{P}$ holds for all $\XX \in U$, then the property $\mathcal{P}$ holds for  $s$ \emph{general} sets of points. \par 
 
\vspace{0.5em}
In this manuscript, we focus on a specific case of Conjecture \ref{conjecture.Demailly} when $m=2$ and $N\geqslant 3$.  
\begin{conjecture}\label{conjecture.Demailly2}
If  $I_\XX$  is the defining ideal of $\XX=\{P_1,  \dots, P_s\} \subset \mathbb{P}^N_{\mathbb{C}} $, then
\[{\ahat(I)}\geqslant \dfrac{\alpha(I^{(2)}) + N-1}{N+1}. \label{eq.Demailly}
\]
\end{conjecture}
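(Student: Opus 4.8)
\emph{Step 1: reduction to a stable containment.} Write $R=\kk[\PP^N]$, let $\mm$ be its irrelevant maximal ideal, and set $I=I_\XX$. The plan is to deduce Conjecture~\ref{conjecture.Demailly2} from the single ``stable'' symbolic-power containment
\begin{equation}\label{eq.HH2}
I^{\,((N+1)r)} \ \subseteq\ \mm^{\,(N-1)r}\,\bigl(I^{(2)}\bigr)^{r},
\end{equation}
which it is enough to know for infinitely many $r$. Indeed, applying the initial-degree function and using $\alpha(\mm^{a}J)=a+\alpha(J)$ and $\alpha(J^{r})=r\,\alpha(J)$, \eqref{eq.HH2} yields $\alpha\bigl(I^{((N+1)r)}\bigr)\geqslant (N-1)r+r\,\alpha\bigl(I^{(2)}\bigr)$, hence $\alpha\bigl(I^{((N+1)r)}\bigr)/\bigl((N+1)r\bigr)\geqslant\bigl(\alpha(I^{(2)})+N-1\bigr)/(N+1)$; letting $r\to\infty$ and using that $\ahat(I)=\lim_{k\to\infty}\alpha(I^{(k)})/k$ exists (Definition~\ref{def.waldconst}) gives exactly the asserted inequality. (The same argument, with $I^{((m+N-1)r)}\subseteq\mm^{(N-1)r}\bigl(I^{(m)}\bigr)^{r}$, would yield the full Conjecture~\ref{conjecture.Demailly}; only $m=2$ is needed here.) One checks that \eqref{eq.HH2} does hold in the cases where Demailly's bound is an equality --- e.g. $\XX$ a complete intersection, or a single point, where $\mm\supseteq I$ makes it transparent --- so the target is reasonable and not overly strong.

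\emph{Step 2: the containment \eqref{eq.HH2}.} Because $I^{(2)}$ has big height $N$, the Ein--Lazarsfeld--Smith/Hochster--Huneke theorem gives $I^{(2Nr)}=\bigl(I^{(2)}\bigr)^{(Nr)}\subseteq\bigl(I^{(2)}\bigr)^{r}$ for free --- but with the far too large left exponent $2Nr$ and no power of $\mm$; \eqref{eq.HH2} asks to replace $2Nr$ by $(N+1)r$ and to gain the factor $\mm^{(N-1)r}$. This is exactly the Harbourne--Huneke containment $I^{(Nt)}\subseteq\mm^{(N-1)t}I^{t}$, and its sharpening $I^{(Nt-N+1)}\subseteq\mm^{(N-1)(t-1)}I^{t}$ (the Stable Harbourne--Huneke Conjecture), but with $I^{(2)}$ in place of $I$. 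The extra structure to exploit is that $I^{(2)}$ is the second symbolic power of a radical ideal of points, so $\bigl(I^{(2)}\bigr)^{(t)}=I^{(2t)}$, which tightly constrains how $\bigl(I^{(2)}\bigr)^{r}$ sits inside $I^{(2r)}$; the aim is to combine this with the Harbourne--Huneke technology --- multiplier ideals and asymptotic multiplicities on the blow-up of $\XX$, or the monomialization arguments for symbolic powers of fat points --- to extract \eqref{eq.HH2}. This is where genuinely new input is needed.

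\emph{Special configurations and the main obstacle.} When $\XX$ is in (very) general position one can bypass Step~2: a sequence of standard quadratic Cremona transformations based at $N+1$ of the points simplifies the configuration, and recording how $\alpha(I^{(2)})$ and $\ahat(I)$ change under these birational maps reduces the inequality to a finite combinatorial estimate --- this proves the conjecture, and verifies \eqref{eq.HH2}, for generic (hence very general) points, for sufficiently many general points, and for general points in $\PP^N$ of small dimension. The obstacle to the general statement is that these results do not propagate to an arbitrary configuration: both $\alpha(I_\XX^{(2)})$ and $\ahat(I_\XX)$ can only \emph{decrease} as the points of $\XX$ are moved into (very) general position (initial degrees of symbolic powers, and hence $\ahat$, only go up under specialization). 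Thus for a special $\XX$ each side of Demailly's inequality exceeds its value at a nearby general configuration, and knowing the inequality there does not chain to give it at $\XX$; likewise, replacing $\XX$ by a sub- or super-configuration moves both sides in the same direction. Overcoming this mismatch --- proving \eqref{eq.HH2} directly for an arbitrary $\XX$, or producing a degeneration of $\XX$ that keeps $\alpha(I^{(2)})$ rigid while still forcing $\ahat(I)$ to be large --- is the crux, and is why the bound is obtained here only for the special families above.
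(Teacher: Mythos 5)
The statement you were asked to prove is a conjecture, and neither your proposal nor the paper proves it in full generality: your Step 2 explicitly defers the key containment to ``genuinely new input,'' so what you have is a correct reduction plus a program, not a proof. That said, your Step 1 is exactly the mechanism the paper uses. The paper's Theorem \ref{thm.neededineq} runs the implication in the opposite direction --- it derives the containment $I^{(r(N+1)-N+1)} \subseteq \mm^{r(N-1)}\bigl(I^{(2)}\bigr)^r$ for \emph{some} $r$ from the inequality $\ahat(I) \geqslant \bigl(\reg(I^{(2)})+N-1\bigr)/(N+1)$ for generic points --- and then, following \cite[Theorem 2.9]{bghn2022demailly}, uses that containment (which behaves well under specialization) to deduce Demailly's bound for general points; your $\alpha$-count showing that the containment implies the bound is the same computation as in \cite[Lemma 2.6]{BCH2014}, which the paper invokes. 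Your closing paragraph likewise matches what the paper actually does for the families it handles: Alexander--Hirschowitz to control $\alpha(I^{(2)})$ and $\reg(I^{(2)})$ (Lemmas \ref{lem.Demaillyverygeneral} and \ref{lem.Demaillygeneral}), then Cremona reductions and the Waldschmidt decomposition of \cite{localeffectivity} to obtain the matching lower bounds on $\ahat$.

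Two concrete issues remain. First, the gap: your stable containment for an \emph{arbitrary} $\XX$ is established nowhere, and the Ein--Lazarsfeld--Smith input $I^{(2Nr)} \subseteq \bigl(I^{(2)}\bigr)^r$ you cite is far too weak to close it; the paper does not claim the conjecture for arbitrary $\XX$ either (it leaves open even $10 \leqslant s \leqslant 13$ general points in $\PP^5$), so the statement as posed remains a conjecture. Second, your semicontinuity claim is backwards: initial degrees of symbolic powers, and hence $\ahat$, can only go \emph{down} under specialization --- see Lemma \ref{lemma: known inequalities of Waldschmidt constant}(iv), which says generic points have the \emph{largest} Waldschmidt constant --- so for a special $\XX$ both sides of Demailly's inequality are \emph{at most} their values at a nearby general configuration, not ``exceed'' them as you write. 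Your conclusion that the two sides move in the same direction, so the general-position case does not chain to an arbitrary configuration, survives this correction, but the stated direction would mislead any attempt to exploit the monotonicity.
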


As mentioned before, Conjecture \ref{conjecture.Demailly2} was proved for at least $2^N$ very general points \cite{localeffectivity}, for at most $N+2$ and some cases of $N+3$ general points \cite{NagelTrokInterpolation}, and for at least $6^3$ general points in $\PP^3$, at least $5^4$ general points in $\PP^4$, and at least $4^N$ general points in $\PP^N$ where $N\geqslant 5$ \cite{bghn2022demailly}. 

Our main result is to complete the proof of Conjecture \ref{conjecture.Demailly2} for any set of very general points. In addition, we verify the conjecture for at least $2^N$ general points, and general points in $\PP^3,\PP^4$, and  $\PP^5$ except for four cases in $\PP^5$. This is not a routine checking as it involves finding better bounds for the Waldschmidt constants in given cases. In general, proving the results for general points is much harder than that for very general points as we have to move from countably infinite open conditions to one open condition. 

\begin{introthm*}
Conjecture \ref{conjecture.Demailly2} holds for the points in the following list (*).
\begin{itemize}
    \item (Theorem \ref{thm.Demaillyverygeneral}) $s$ very general points where $N+3\leqslant s \leqslant 2^N$. 
\item  (Corollary \ref{cor.generalatleast2^N}) at least $2^N$ general points. 
\item  (Corollary \ref{cor.Demailly2.general.P345}) $s$ general points in $\PP^N$, for the following cases: 
            \begin{enumerate}[label=(\roman*)] 
                          \item $6 \leqslant s \leqslant 6^3$ ,  in $\PP^3$
                          \item  $8 \leqslant s \leqslant 5^4$, in $\PP^4 $ 
                          \item  $s=8,9$ and $14 \leqslant s \leqslant   4^5$, in $\PP^5$.
                          \end{enumerate}
\end{itemize}

Coupled with results in \cite{bghn2022demailly,localeffectivity,NagelTrokInterpolation}, Demailly's Conjecture when $m=2$ now holds for any set of very general points, any set of at least $2^N$ general points, and any set of general points in $\PP^3,\PP^4,$ and $\PP^5$, except for $10\leqslant s \leqslant 13$ general points in $\PP^5$.  
\end{introthm*}
The result for $10\leqslant s \leqslant 13$ general points in $\PP^5$ is  still not known. We believe a more clever treatment for the Waldschmidt constant of $10$ points would lead to the affirmative answer to these four cases. We now briefly discuss our strategies. This involves deriving and using regularity and the initial degree of the second symbolic power of the defining ideal of generic points from its Hilbert function. 
Let $\p_i$ be the defining ideal of the point $P_i \in\PP^N$, then the fat point scheme $\YY=m_1P_1+\dots + m_sP_s$ is defined by the ideal $I_\YY=\p^{m_1}\cap \dots \cap \p^{m_s}$.
The Hilbert function of $\YY$ is defined as follows \[\HF_{R/I_\YY}(t)=\dim_\kk\left( R/I_\YY\right)_t, \text{ for all } t \in \ZZ_+.\] 
The Alexander-Hirschowitz Theorem provides the Hilbert function of  $\YY$, when $\YY=2P_1+\dots +2P_s$.
From the definition, we obtain $\dim_\kk[I_\YY]_t={N+t\choose N}-\HF_{R/I_\YY}(t)$. Thus the least $t>0$ for which $\HF_{R/I_\YY}(t)<{N+t\choose N}$ will give the initial degree $\alpha(I_\YY)$.  
\par For the \emph{very general} case, we control $\alpha(I^{(2)})$ using the Alexander-Hirschowitz Theorem and derive in Lemma \ref{lem.Demaillyverygeneral} the following:  if $I$ is the ideal defining $s$ generic points then to show Conjecture \ref{conjecture.Demailly2} it is enough to prove \[\ahat(I)\geqslant \dfrac{N+\ell}{N+1}, \text{ where } {N+\ell \choose N} \leqslant (N+1)s < {{N+\ell+1} \choose N}.\tag{1}\label{inq1}\]
Next, to handle $\ahat(I)$, in Theorem \ref{thm.Demaillyverygeneral}, we split the number of points into two adequate parts to use induction on $N$, reduction methods  and results from \cite{localeffectivity,SankhoThaiChudnovsky} to obtain the proper lower bound for $\ahat(I)$ to prove inequality \ref{inq1} and  thus  prove Conjecture \ref{conjecture.Demailly2} for very general points. \par
\vspace{0.5em}
For \emph{general} points, we need an additional tool. To prove Conjecture \ref{conjecture.Demailly2} for general sets of points in the List (*), we prove the containment $I^{(r(N+1)-N+1)} \subseteq \mm^{r(N-1)}\left ( I^{(2)} \right )^r$, for some $r\in \NN$,  for ideals defining generic points, as in  \cite[Theorem 2.9]{bghn2022demailly}. Following the strategy in \cite{BCH2014, bghn2022demailly}), we show that a specific inequality between the Waldschmidt constant and the regularity of the second symbolic power would imply the aforementioned containment. \par
\vspace{0.5em}

\noindent\textbf{Theorem \ref{thm.neededineq}} Let $I \subset R=\kk[\PP^N]$ be the defining ideal of $s$ generic points. If 
\[
\widehat{\alpha}(I) \geqslant \frac{N-1+\reg(I^{(2)})}{N+1},
\]
then there exists $r$ such that 
$I^{(r(N+1)-N+1)} \subseteq \mm^{r(N-1)}\left ( I^{(2)} \right )^r$. \par
\vspace{0.5em}

Thus, we can focus on proving the hypothesis inequality of the above theorem for each case of general points. Firstly, upper bounds for the regularity of the second symbolic power of the ideal defining generic points can be derived from the Alexander-Hirschowitz Theorem.  
Recall that the regularity of $\YY$ is the least integer $t+1$ such that $ \HF_{R/I_\YY}(t)=\sum_{i=1}^{s}{N+m_i-1 \choose N}$. 
In Lemma \ref{lem.Demaillygeneral},  we derive the upper bounds and prove that:   if $I$ is the defining ideal of $s$ generic points then to prove the hypothesis of  Theorem \ref{thm.neededineq} it is sufficient to prove
\[\ahat(I)\geqslant \dfrac{N+\ell+1}{N+1}, \text{ where }  {N+\ell \choose N} < (N+1)s \leqslant {{N+\ell+1} \choose N}. \tag{2}\label{inq2}\]

Secondly, we need the proper lower bounds for the Waldschmidt constant of the defining ideal of generic points for proving inequality  \ref{inq2}. Using reduction methods from \cite{SankhoThaiChudnovsky}, we establish reasonable lower bounds for the Waldschmidt constant in Theorem \ref{thm.Nge62^Nto4^N}, which leads to Corollary \ref{cor.generalatleast2^N}. Again, using reduction methods and splitting the points, as in \cite{localeffectivity, SankhoThaiChudnovsky},  we establish reasonable lower bounds for the Waldschmidt constant  in Theorems \ref{thm.needediqP3}, \ref{thm.needediqP4}, and \ref{thm.needediqP5}, which lead to Corollary \ref{cor.Demailly2.general.P345}.  Thus combining we prove Conjecture \ref{conjecture.Demailly2} for   $s$ many general points in the List (*). 
  
 We believe that our strategies can be used to prove Demailly's Conjecture \ref{conjecture.Demailly} for higher values of $m$. Following our method, one can derive similar inequalities as (\ref{inq1}) and (\ref{inq2}).  After that computation of  adequate lower bounds for the Waldschmidt constant using methods in \cite{localeffectivity, SankhoThaiChudnovsky} or other methods would lead to the desired conjectural bound \ref{conjecture.Demailly}. 

This paper is outlined as follows. In Section \ref{sec.prelim}, we recall all results needed for later sections.  
In Section \ref{sec.DemVeryGen}, we prove Conjecture \ref{conjecture.Demailly2} for any set of very general points. In Section \ref{sec.>2^NGen} we prove Conjecture \ref{conjecture.Demailly2} for at least $2^N$ many general points in $\PP^N$ with $N \geqslant 6$. In Section \ref{sec.DemGen} we prove that any set of general points in $\PP^N, N=3,4,5$ satisfies Conjecture \ref{conjecture.Demailly2}, except for the case ($N=5, 10 \leqslant s\leqslant 13$).
%%%%%%%%%%%%%%%%%%%%%%%%%%%%%%%%%%%%%%%%
\par
\vspace{1em}
\noindent
{\bf Acknowledgments.}
The authors thank Huy T\`ai H\`a for 
his comments and  suggestions.
The second author (TTN) is partially supported by the NAFOSTED (Vietnam) under the grant number 101.04-2023.07. Part of this work was completed when the second author was visiting the Vietnam Institute of Advanced Study in Mathematics (VIASM). He thanks VIASM for the hospitality and support. The authors also thank the anonymous referees for their detailed comments, which helped improve the presentation of the manuscript.  
%%%%%%%%%%%%%%%%%%%%%%%%%%%%%%%%%%%%%%%%%%%

\section{Preliminaries}
\label{sec.prelim}
In this section, we collect important definitions and notations used in this paper.
For unexplained terminology, we refer the interested reader to the text \cite{CHHVT2020}. We will only work over algebraically closed field of charateristic zero. 

 \begin{definition}
 \label{def.symbpower}
 	Let $R$ be a commutative ring and let $I \subseteq R$ be an ideal. For $m \in \NN$, we define the \emph{$m$-th symbolic power} of $I$ as
 	$$I^{(m)} = \bigcap_{\pp \in \Ass(I)} \left(I^mR_\pp \cap R\right).$$
 \end{definition}
 
Although symbolic powers are notoriously hard to compute in general, symbolic powers of ideals of points have a nice description. It is well-known that if $\XX$ is the set $ \{P_1, \dots, P_s\} \subseteq \PP^N_\kk$ of $s$ many distinct points and $\p_i \subseteq \kk[\PP^N_\kk]$ is the defining ideal of $P_i$, then $I = \p_1 \cap \dots \cap \p_s$ is the ideal defining $\XX$, and the $m$-th symbolic power of $I$ is given by 
	$$I^{(m)} = \p_1^m \cap \dots \cap \p_s^m.$$

To study lower bounds for the initial degree of symbolic powers of ideals, the \emph{Waldschmidt constant} turned out to be very useful by the following definition-lemma.

\begin{definition}
\label{def.waldconst}
 	Let $I \subseteq \kk[\PP^N_\kk]$ be a homogeneous ideal and $\alpha(J)$ denote the initial degree of a homogeneous ideal $J$. The \emph{Waldschmidt constant} of $I$ is defined as 
 	$$\ahat(I) := \lim_{m \rightarrow \infty} \dfrac{\alpha(I^{(m)})}{m} = \inf_{m \in \NN} \dfrac{\alpha(I^{(m)})}{m}.$$ 
 	See, for example, \cite[Lemma 2.3.1]{BoH}. 
 \end{definition}
We recall the statement of  Demailly's Conjecture in terms of the Waldschmidt constant.
 \begin{conjecture}[Demailly]
 	If $I \subseteq \kk[\PP^N_\kk]$ is the defining ideal of a set of (reduced) points in $\PP^N_\kk$, then, for each $m\in \NN$,
 	$$\ahat(I) \geqslant \dfrac{\alpha(I^{(m)})+N-1}{m+N-1}.$$
 \end{conjecture}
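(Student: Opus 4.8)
The plan is to reduce the conjectural bound, for each fixed $m$, to a single lower bound on the Waldschmidt constant, and then to attack that lower bound by induction on $N$ together with specialization and reduction techniques. The starting point is the identity $\widehat{\alpha}(I)=\inf_{k\in\NN}\alpha(I^{(k)})/k$ of Definition \ref{def.waldconst}: it shows that Demailly's inequality for a given $m$ is implied by a uniform containment statement. Following the Bocci--Cooper--Harbourne circle of ideas \cite{BCH2014, bghn2022demailly}, it is enough to produce, for some $r\geqslant 1$, a containment of the shape $I^{(r(m+N-1)-N+1)}\subseteq \mm^{r(N-1)}\bigl(I^{(m)}\bigr)^{r}$; taking initial degrees of both sides and letting $r\to\infty$ then yields $\widehat{\alpha}(I)\geqslant \frac{\alpha(I^{(m)})+N-1}{m+N-1}$. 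This is exactly the mechanism isolated in Theorem \ref{thm.neededineq} for $m=2$, and the same argument, with $\reg(I^{(2)})$ replaced by $\reg(I^{(m)})$, gives the analogous criterion in general: the desired containment holds once $\widehat{\alpha}(I)\geqslant \frac{N-1+\reg(I^{(m)})}{m+N-1}$.

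Granting that reduction, the problem splits into two halves. First one needs good control of $\alpha(I^{(m)})$ and $\reg(I^{(m)})$. For generic points these are governed by the Hilbert function of the fat point scheme $mP_1+\dots+mP_s$, and when $m=2$ the Alexander--Hirschowitz theorem determines that Hilbert function completely; this is what powers Lemmas \ref{lem.Demaillyverygeneral} and \ref{lem.Demaillygeneral}, which turn the target into the clean numerical inequalities \eqref{inq1} and \eqref{inq2}. For higher $m$ one would instead feed in the best available estimates on $\HF_{R/I_\YY}$ (from Alexander--Hirschowitz-type computations, or from the specific configuration under consideration), producing inequalities of the same flavour with weaker constants.

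Second, and this is the real content, one needs sufficiently strong lower bounds for $\widehat{\alpha}(I)$ itself when $I$ defines $s$ generic points in $\PP^N$. Here the plan is to induct on $N$: partition the $s$ points into two groups whose sizes are chosen so that one group can be specialized onto a hyperplane and handled by the inductive hypothesis in $\PP^{N-1}$, while the other is estimated directly, and then combine the two contributions using the reduction lemmas of \cite{SankhoThaiChudnovsky} and the effectivity results of \cite{localeffectivity}. Optimizing the split gives a lower bound of the form $\widehat{\alpha}(I)\geqslant (N+\ell)/(N+1)$, respectively $(N+\ell+1)/(N+1)$, in the relevant ranges of $s$, which is precisely what \eqref{inq1} and \eqref{inq2} demand; this is carried out in Theorems \ref{thm.Demaillyverygeneral}, \ref{thm.Nge62^Nto4^N}, \ref{thm.needediqP3}, \ref{thm.needediqP4} and \ref{thm.needediqP5}, and yields Corollaries \ref{cor.generalatleast2^N} and \ref{cor.Demailly2.general.P345}.

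I expect the main obstacle to be passing from \emph{very general} to \emph{general} points. For very general points one is allowed to intersect countably many dense open conditions, so semicontinuity of $\alpha(I^{(m)})$ and $\widehat{\alpha}(I)$ under specialization lets one read off the generic value directly; for general points only a single open condition is permitted, and the Waldschmidt bound coming out of the splitting argument is typically a hair too weak --- overcoming this needs a genuinely sharper analysis of $\widehat{\alpha}$ for a small number of points, which is also why the method as it stands leaves open the cases $10\leqslant s\leqslant 13$ in $\PP^5$ (and, for $m>2$, large ranges of $s$). A secondary difficulty, inherent to the statement in full generality, is that for arbitrary non-generic reduced point sets the reductions above do not apply verbatim: one would additionally want a specialization argument showing that among point sets of a given cardinality the generic one is the hardest case for this particular inequality.
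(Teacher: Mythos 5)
The statement you were asked about is a conjecture: the paper does not prove it in this generality, and neither do you, so there is no complete argument on either side to compare. What your proposal does is reconstruct, accurately, the paper's strategy for the special cases it actually establishes: the reduction via a containment $I^{(r(m+N-1)-N+1)}\subseteq \mm^{r(N-1)}\bigl(I^{(m)}\bigr)^{r}$ in the spirit of Theorem \ref{thm.neededineq} and \cite{BCH2014}, the use of Alexander--Hirschowitz to convert the right-hand side into the numerical targets \eqref{inq1} and \eqref{inq2} when $m=2$, and the induction on $N$ via Waldschmidt decomposition (Lemma \ref{lemma: Waldschmidtdecomp}) and Cremona reductions to obtain the required lower bounds on $\widehat{\alpha}(I)$. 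That matches Sections \ref{sec.DemVeryGen}--\ref{sec.DemGen} of the paper faithfully.

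The genuine gaps --- which, to your credit, you largely name yourself --- are exactly what keeps the statement a conjecture rather than a theorem. First, the entire machine runs on generic points: for an arbitrary reduced set $\XX$ of $s$ points, Lemma \ref{lemma: known inequalities of Waldschmidt constant}$(iv)$ says the generic Waldschmidt constant \emph{dominates} $\widehat{\alpha}(I_\XX)$, which is the wrong direction, and $\alpha(I_\XX^{(m)})$ can likewise differ from its generic value, so no semicontinuity or specialization argument reduces the conjecture for all reduced point sets to the generic case; the "secondary difficulty" you mention at the end is in fact the central unresolved one for the statement as written. Second, for $m>2$ there is no Alexander--Hirschowitz-type theorem computing the Hilbert function of $I^{(m)}$, so the analogues of \eqref{inq1} and \eqref{inq2} are not available and the first half of your reduction has no input. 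Third, even for $m=2$ the splitting bounds fall just short in some ranges (e.g.\ $10\leqslant s\leqslant 13$ general points in $\PP^5$). In summary: as a description of the paper's method your proposal is correct and essentially identical to it, but as a proof of the stated conjecture it is necessarily incomplete, and you should present it as a program for the partial results rather than as a proof.
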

 
 A direct approach to tackle this conjecture is to treat the two sides of the inequality separately. We will see that the right-hand side, when $m=2$ and the points are general, can be computed by the Alexander-Hirschowitz Theorem, while the left-hand side can be bounded by a combination of reduction techniques. We also define the following notation for later use.
 
 \begin{definition}
 	For $i=1,\ldots s$, let $\p_i $ be the defining ideal of a point $P_i \in \XX=\{P_1, \dots, P_s \} \subset \PP^N_\kk$ and $\overline{m}=(m_1, \dots, m_s)$ be a tuple of positive integers. The fat point scheme denoted by $m_1P_1+m_2P_2+ \dots + m_sP_s$ is the scheme with the defining ideal 
 	$$I (\overline{m})= I(m_1, \dots, m_s) = \p_1^{m_1} \cap \p_2^{m_2} \cap \dots \cap \p_s^{m_s}. $$
 	When $m_j \leqslant 0$, we take $\p_j^{m_j}=\kk[\PP^N_\kk]$. 
 	We will also use the following notation: $$I\left(m^{\times s}\right)=I(\underbrace{m,m, \dots, m}_{ s \text{ times }}).$$
  If we want to emphasize the dimension of the projective space where the points live in, we also use the notation $I (\PP^N,\overline{m})$ or $I (\PP^N,m^{\times s})$.
 \end{definition}

  It is known that the set of all collections of $s$ (not necessarily distinct) points in $\PP^N_\kk$ is parameterized by the Chow variety $G(1,s,N+1)$ of $0$-cycles of degree $s$ in $\PP^N_\kk$ (see \cite{GKZ1994}) (or the Hilbert scheme of $s$ points in $\PP^N$). A property $\mathcal{P}$ is said to hold for a \emph{general} set or \emph{very general} set of $s$ points in $\PP^N_\kk$ if it holds for all configuration of $s$ points in an open dense subset $U \subseteq G(1,s,N+1)$ or in a countable intersection of such open dense sets, respectively. 
  
The paper \cite{FMX2018} offers an alternative way to describe general and very general points that provides a practically useful way to approach Demailly's conjecture and containment problem for general and very general sets of points.
Let $(z_{ij})_{1 \leqslant i \leqslant s, 0 \leqslant j \leqslant N}$ be $s(N+1)$ new indeterminates. We use $\z$ and $\a$ to denote the collections $(z_{ij})_{1 \leqslant i \leqslant s, 0 \leqslant j \leqslant N}$ and $(a_{ij})_{1 \leqslant i \leqslant s, 0 \leqslant j \leqslant N}$, respectively. Let $\kk(\z)$ be the field extension adjoining all $z_{ij}$, 
 $$P_i(\z) := [z_{i0}: \dots : z_{iN}] \in \PP^N_{\kk(\z)}, \quad \text{ and } \quad \XX(\z) := \{P_1(\z), \dots, P_s(\z)\}.$$
 We call the set $\XX(\z)$ the set of $s$ \emph{generic} points in $\PP^N_{\kk(\z)}$. For any $\a \in \AA^{s(N+1)}_\kk$, let $P_i(\a)$ and $\XX(\a)$ be obtained from $P_i(\z)$ and $\XX(\z)$, respectively, by setting $z_{ij} = a_{ij}$ for all $i,j$. There exists an open dense subset $W_0 \subseteq \AA^{s(N+1)}_\kk$ such that $\XX(\a)$ is a set of distinct points in $\PP^N_\kk$ for all $\a \in W_0$, and all subsets of $s$ points in $\PP^N_\kk$ arise in this way. It is shown that if a property $\mathcal{P}$ holds for $\XX(\a)$ whenever $\a$ in an open dense set $U \subseteq \AA^{s(N+1)}_\kk$ (or a countable union of open dense subsets), the property $\mathcal{P}$ holds for a general (or very general, respectively) set of $s$ points in $\PP^N_\kk$, see \cite[Lemma 2.3]{FMX2018}. Moreover, this construction behaves well and compatible with the \emph{specialization} process given in the work of Krull \cite{Krull1948}, see also \cite[Definition 2.8, Remarks 2.9, 2.10]{bghn2021chudnovskys}.
 From now on, we will use the terms general and very general as in this construction. We also collect the following well-known facts about the Waldschmidt constant of a set of generic points for later use.

 \begin{lemma}\label{lemma: known inequalities of Waldschmidt constant}
 	If	$\ahat(s) := \ahat(I (1^{\times s}))$  is the Waldschmidt constant of the defining ideal of $s$ generic points in $\PP^N$, then the followings are true  
 	\begin{enumerate}[label=(\roman*)]
 		\item $\ahat(s) \geqslant \ahat(k)$ whenever $s \geqslant k$.  
 		\item  $\alpha(I(m^{\times s})) \geqslant m\ahat(s) $.
 		\item $ \ahat \big(I \big( 1 ^{\times {k^N}}\big) \big) =k$.\cite{EVAIN2005516, DTG2017}
            \item $\ahat(s) \geqslant \ahat(J)$ for any $J$ defining ideal of $s$ points.
 	\end{enumerate}
 \end{lemma}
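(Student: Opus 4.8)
\textbf{Proof proposal for Lemma \ref{lemma: known inequalities of Waldschmidt constant}.}

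The plan is to establish the four inequalities using only elementary properties of symbolic powers of ideals of points together with the specialization machinery recalled just before the lemma, reserving the nontrivial content for item (iii), which is quoted from \cite{EVAIN2005516, DTG2017}.

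\emph{Item (i).} The key observation is that symbolic powers of ideals of generic points specialize well: if $\XX(\z)$ is a set of $s$ generic points and $\YY(\z)$ is the subset consisting of the first $k$ of them, then $I(\YY(\z))^{(m)} \supseteq I(\XX(\z))^{(m)}$ since $I(\YY(\z)) = \p_1 \cap \dots \cap \p_k$ divides (contains) $I(\XX(\z)) = \p_1 \cap \dots \cap \p_s$, hence $\p_1^m \cap \dots \cap \p_k^m \supseteq \p_1^m \cap \dots \cap \p_s^m$. Taking initial degrees reverses the inclusion, so $\alpha(I(\XX(\z))^{(m)}) \geqslant \alpha(I(\YY(\z))^{(m)})$ for every $m$; dividing by $m$ and passing to the infimum over $m$ gives $\ahat(s) \geqslant \ahat(k)$. (One should note the $k$ generic points among the $s$ generic ones are again generic, so $\ahat$ of that subconfiguration is exactly $\ahat(k)$.)

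\emph{Item (ii).} Here I would use the inclusion $I(m^{\times s}) = \p_1^m \cap \dots \cap \p_s^m \subseteq I(1^{\times s})^{(m)}$ — in fact it is an equality by the description of symbolic powers of ideals of points recalled after Definition \ref{def.symbpower}, but the inclusion suffices. Therefore $\alpha(I(m^{\times s})) \geqslant \alpha(I(1^{\times s})^{(m)}) \geqslant m \cdot \inf_{t}\frac{\alpha(I(1^{\times s})^{(t)})}{t} = m\,\ahat(s)$, where the middle inequality is just the definition of the Waldschmidt constant as an infimum (Definition \ref{def.waldconst}).

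\emph{Item (iii)} is cited directly: the identity $\ahat(I(1^{\times k^N})) = k$ for $k^N$ generic points is proved in \cite{EVAIN2005516, DTG2017}, so nothing needs to be done beyond recording it. \emph{Item (iv).} This is the semicontinuity statement underlying the whole generic/general framework: for any configuration $J$ of $s$ (distinct) points $\XX(\a)$ arising via the construction above, and each fixed $m$, one has $\alpha(I(\XX(\a))^{(m)}) \geqslant \alpha(I(\XX(\z))^{(m)})$ because passing to a generic configuration can only increase the initial degree — concretely, a form of degree $d$ in $I(\XX(\z))^{(m)}$ has coefficients that are polynomials in the $z_{ij}$, and a form of degree $d$ vanishing to order $m$ at the $s$ specialized points imposes at most as many constraints as at the generic points, so $\dim[I(\XX(\a))^{(m)}]_d \geqslant \dim[I(\XX(\z))^{(m)}]_d$; see \cite[Lemma 2.3]{FMX2018} and the specialization results of \cite{Krull1948, bghn2021chudnovskys}. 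Dividing by $m$ and taking the infimum yields $\ahat(s) = \ahat(I(1^{\times s})) \geqslant \ahat(J)$.

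The only step with genuine mathematical weight is item (iii), and that is imported verbatim from the literature; the remaining obstacle, such as it is, is simply to cite the specialization lemma \cite[Lemma 2.3]{FMX2018} correctly in item (iv) and to make sure the ``$k$ of the $s$ generic points are themselves generic'' point in item (i) is phrased cleanly — both are bookkeeping rather than substance.
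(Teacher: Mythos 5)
The paper does not actually prove this lemma: it is stated as a list of ``well-known facts,'' with only a citation to \cite{EVAIN2005516, DTG2017} for item (iii). Your proposal supplies the standard arguments, and they are correct in substance: (i) follows because the symbolic powers of the subconfiguration contain those of the full configuration and $k$ of the $s$ generic points are again generic; (ii) is immediate from $I(m^{\times s})=I(1^{\times s})^{(m)}$ and the infimum characterization in Definition \ref{def.waldconst}; (iii) is imported; (iv) is the semicontinuity/specialization argument from \cite{FMX2018, Krull1948, bghn2021chudnovskys}.

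One direction is written backwards in item (iv): you assert $\alpha(I(\XX(\a))^{(m)}) \geqslant \alpha(I(\XX(\z))^{(m)})$, but both your verbal justification (``passing to a generic configuration can only increase the initial degree'') and your dimension count $\dim[I(\XX(\a))^{(m)}]_d \geqslant \dim[I(\XX(\z))^{(m)}]_d$ give the opposite inequality $\alpha(I(\XX(\z))^{(m)}) \geqslant \alpha(I(\XX(\a))^{(m)})$ --- which is also the inequality you actually need, since the conclusion $\ahat(s)\geqslant\ahat(J)$ requires the generic initial degrees to dominate the special ones termwise before taking the infimum. This is a transcription slip rather than a gap, but as written the displayed inequality contradicts the rest of the paragraph, so swap the two sides.
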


 Our proof of Demailly's Conjecture when $m=2$ relies on the lower bound for the Waldschmidt constant of a set of generic points and the upper bound of the Castelnouvo-Mumford regularity and the initial degree of the second symbolic power of the defining ideal of the set of generic points. To study the former, we utilize the \emph{Cremona reduction} process, see for instance, \cite{dumnicki2012symbolic,Dumnicki2015,SankhoThaiChudnovsky}, and gluing results from the Waldschmidt decomposition technique given in \cite{localeffectivity}. For the latter, the celebrated Alexander-Hirschowitz Theorem provides bounds for the regularity value and the initial degree of the second symbolic power. We first recall some useful results that can be proved by using Cremona transformations.

% \begin{definition}
 %\label{def.cremona}
 	%The Cremona transformation is the following birational map 
 	%$$\Phi: \PP^N_\kk \to \PP^N_\kk, \text{ defined by } \Phi(x_0: \dots: x_N) \mapsto (x_0^{-1}: \dots: x_N^{-1}).$$
 %\end{definition}

\begin{lemma}{\cite[Lemma 3.1]{SankhoThaiChudnovsky},\cite[Proposition 8]{dumnicki2012symbolic}} 
\label{lemma: reduction of multiplicity of points}
	Let $I(m_1, \dots, m_s)$ be the ideal of $s$ generic points or general points with multiplicities $m_1, \dots, m_s$. If $$I(m_1, \dots, m_s)_d \neq 0, \text{ then    } I (m_1+k, \dots,m_{N+1}+k,m_{N+2}, 
	\dots,  m_s ) _{d+k} \neq 0,$$ where $k = (N-1)d-\sum_{j=1}^{N+1}m_j $. 
\end{lemma}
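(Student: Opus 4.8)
The plan is to prove the Cremona reduction lemma (Lemma \ref{lemma: reduction of multiplicity of points}) by tracking how the standard quadratic Cremona transformation $\Phi$ based at $N+1$ general (or generic) points acts on linear systems of hypersurfaces with assigned multiplicities. First I would set up the situation: after a change of coordinates we may assume the first $N+1$ points are the coordinate points $E_0 = [1:0:\cdots:0], \dots, E_N = [0:\cdots:0:1]$; this is legitimate for generic points since the condition ``no $N+1$ of the $s$ points lie on a hyperplane'' is an open dense condition, and for general points we work on the dense open locus where the first $N+1$ points are in linearly general position. The Cremona map $\Phi(x_0:\cdots:x_N) = (x_0^{-1}:\cdots:x_N^{-1})$, cleared of denominators, is $(x_0:\cdots:x_N)\mapsto (x_1\cdots x_N : x_0 x_2 \cdots x_N : \cdots : x_0\cdots x_{N-1})$, a birational involution whose indeterminacy locus and exceptional behavior is concentrated along the coordinate subspaces, in particular at the coordinate points $E_i$.

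Next I would compute the pullback. If $F$ is a form of degree $d$ vanishing to order $\geqslant m_j$ at $E_j$ for $j = 1,\dots,N+1$ and to order $\geqslant m_j$ at the remaining points $P_j$ ($j \geqslant N+2$), consider $\Phi^*(F) = F(x_1\cdots x_N,\, \dots,\, x_0\cdots x_{N-1})$. This is a form of degree $Nd$, but it is divisible by large powers of the coordinate hyperplanes: a monomial computation shows that the monomial $x_0^{a_0}\cdots x_N^{a_N}$ (with $\sum a_i = d$) pulls back to a monomial divisible by $x_i^{d - a_i}$ for each $i$. Because $F$ has multiplicity $\geqslant m_i$ at $E_i$, every monomial of $F$ has $a_i \leqslant d - m_i$, so $\Phi^*(F)$ is divisible by $x_i^{m_i}$ for $i = 0,\dots,N$ — wait, more carefully, the vanishing order $m_i$ at $E_i$ forces the exponent of $x_i$ in each monomial to be at most $d - m_i$ only after noting the local expansion; the precise bookkeeping is that each monomial of $\Phi^*F$ is divisible by $x_{i}^{(d - \deg_{x_i}(\text{monomial of }F))}$, and the minimum over monomials of $F$ of $\deg_{x_i}$ relates to $m_i$ via the multiplicity. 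Dividing $\Phi^*(F)$ by $\prod_{i=0}^{N} x_i^{d - \mu_i}$ where $\mu_i$ is chosen as the appropriate quantity, one obtains a new form $G = \Phi^*(F)/\prod x_i^{d-\mu_i}$ of degree $Nd - \sum_{i=0}^{N}(d-\mu_i)$. Setting this up with $k = (N-1)d - \sum_{j=1}^{N+1} m_j$ one checks the degree of $G$ is exactly $d + k$, and that $G$ vanishes to order $\geqslant m_i + k$ at each of the first $N+1$ points (since the map swaps the role of exceptional divisor and strict transform, the multiplicity at $E_i$ increases by $k$) while the multiplicities $m_j$ at $P_j$ for $j \geqslant N+2$ are preserved because $\Phi$ is a local isomorphism near those points (they avoid the coordinate hyperplanes, again by genericity). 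Hence $I(m_1+k,\dots,m_{N+1}+k,m_{N+2},\dots,m_s)_{d+k} \neq 0$.

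The cleanest way to organize the multiplicity bookkeeping is the blow-up/Picard-group formalism: let $X$ be the blow-up of $\PP^N$ at $E_0,\dots,E_N$ with exceptional divisors $\mathcal E_0,\dots,\mathcal E_N$ and hyperplane class $\mathcal H$; the Cremona transformation lifts to an automorphism of $X$ (for $N = 2$, literally; for general $N$ one uses that $\Phi$ becomes an isomorphism away from codimension $\geqslant 2$ loci, which suffices for divisor-class computations) acting on $\mathrm{Pic}(X)$ by $\mathcal H \mapsto N\mathcal H - (N-1)\sum \mathcal E_i$ and $\mathcal E_i \mapsto \mathcal H - \sum_{j\neq i}\mathcal E_j$. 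Then the class $d\mathcal H - \sum_{i=0}^{N} m_i \mathcal E_i - \sum_{j\geqslant N+2} m_j \mathcal E_{P_j}$ maps to $d'\mathcal H - \sum m_i' \mathcal E_i - \sum m_j \mathcal E_{P_j}$ with $d' = Nd - (N-1)\sum_{i=0}^{N-1} m_{i}$... and one reads off $d' = d + k$, $m_i' = m_i + k$ after the substitution; effectivity of the linear system is preserved under this isomorphism, giving the nonvanishing. The main obstacle I anticipate is getting the exact index range and the precise divisibility/multiplicity increments right — in particular verifying that the multiplicity at $E_i$ goes up by exactly $k = (N-1)d - \sum_{j=1}^{N+1} m_j$ rather than some off-by-one variant, and justifying that for $N \geqslant 3$ the lifted Cremona map behaves well enough on $\mathrm{Pic}$ despite not being a genuine morphism of the blow-up. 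These are routine once the coordinate computation is done carefully, and the cited sources (\cite[Proposition 8]{dumnicki2012symbolic}) handle exactly this bookkeeping.
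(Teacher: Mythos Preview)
The paper does not give its own proof of this lemma: it is quoted verbatim with citations to \cite[Lemma 3.1]{SankhoThaiChudnovsky} and \cite[Proposition 8]{dumnicki2012symbolic}, and no argument is supplied. Your outline is precisely the standard Cremona-transformation argument that those references carry out, and it is correct in substance: placing the first $N+1$ points at the coordinate points, pulling back by $\Phi$, stripping the common factor $\prod x_i^{m_i}$, and reading off the new degree $d+k$ and new multiplicities $m_i+k$ is exactly how the proof goes; the remaining points move to their Cremona images, which for generic (or general) points is harmless.

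One small slip to fix when you write it up: in your Picard-group computation you wrote $d' = Nd - (N-1)\sum m_i$, but the correct coefficient on $\sum m_i$ is $1$, not $N-1$ (the $(N-1)$ is the coefficient of $\sum \mathcal E_i$ in the image of $\mathcal H$, not a multiplier on the $m_i$). With the correct formula $d' = Nd - \sum_{j=1}^{N+1} m_j$ one indeed gets $d' = d + k$ and $m_j' = (N-1)d - \sum_{i\neq j} m_i = m_j + k$, as you state in your conclusion. Your caveat about $N\geqslant 3$ is also well placed: the lifted Cremona is only a pseudo-isomorphism (an isomorphism in codimension one) of the blow-up, but that is enough to transport effective divisor classes and hence nonvanishing of the graded pieces.
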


 \begin{lemma}{\cite[Proposition 10]{Dumnicki2015}} 
 \label{lemma: adding multiplicities Dumnicki}
 	Let  $m_1, \dots, m_r, m_1', \dots, m'_s, t,k$ be integers. If $I(m_1, \dots ,m_r)_k =0$ and 	$I(m_1', \dots, m'_s, k+1 )_t =0,$
 	then $I(m_1, \dots, m_r, m_1', \dots, m'_s )_t=0 .$
 \end{lemma}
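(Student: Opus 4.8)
\textbf{Proof proposal for Lemma \ref{lemma: adding multiplicities Dumnicki}.}

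The plan is to reduce to a purely geometric / linear-algebra statement about existence of forms with prescribed vanishing. Recall that, for a tuple of multiplicities $\overline{n}=(n_1,\dots,n_t)$ attached to (generic, hence sufficiently general) points $P_1,\dots,P_t$ of $\PP^N$, the condition $I(\overline{n})_d=0$ means: no nonzero form of degree $d$ vanishes to order $\geqslant n_i$ at $P_i$ for every $i$. The hypotheses give us two such non-existence statements — one in degree $k$ for the points with multiplicities $m_1,\dots,m_r$, and one in degree $t$ for a configuration consisting of the points with multiplicities $m_1',\dots,m_s'$ together with one extra point $Q$ of multiplicity $k+1$ — and we must deduce the non-existence statement in degree $t$ for the combined configuration $m_1,\dots,m_r,m_1',\dots,m_s'$ (on $r+s$ points in general position, realized as a subset of the points defining the generic situation).

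The key step I would carry out: argue by contradiction. Suppose $F\in I(m_1,\dots,m_r,m_1',\dots,m_s')_t$ is a nonzero form of degree $t$ vanishing to order $\geqslant m_i$ at $P_i$ $(1\leqslant i\leqslant r)$ and to order $\geqslant m_j'$ at $P_j'$ $(1\leqslant j\leqslant s)$. Introduce a new general point $Q$, disjoint from all the others (this is legitimate because in the generic/general setting we may add one more general point, and specialization behaves well as recalled after \cite[Lemma 2.3]{FMX2018}). Let $e=\mathrm{ord}_Q F$ be the order of vanishing of $F$ at $Q$. There are two cases. If $e\geqslant k+1$, then $F$ already witnesses $I(m_1',\dots,m_s',k+1)_t\neq 0$, contradicting the second hypothesis. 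If $e\leqslant k$, then consider slicing: since $F$ vanishes to order only $e\leqslant k$ at $Q$, a general line $L$ through $Q$ meets $\{F=0\}$ at $Q$ with multiplicity exactly $e\leqslant k\leqslant t$; more usefully, restricting $F$ to a general hyperplane, or better, using that the linear system of degree-$k$ forms vanishing at the $P_i$ with multiplicity $m_i$ is empty, one produces from $F$ a nonzero degree-$k$ form in $I(m_1,\dots,m_r)$, contradicting the first hypothesis. The cleanest way to realize this second case is: take a general form $G$ of degree $k+1-e$ vanishing to order $k+1-e$ at $Q$ and to no prescribed order elsewhere (e.g. a power of a general linear form through $Q$ when $N\geqslant 2$, or adapt); then $FG$ has degree $t+k+1-e$, vanishes to order $\geqslant m_i$ at $P_i$, $\geqslant m_j'$ at $P_j'$, and $\geqslant k+1$ at $Q$. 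If $t+k+1-e\leqslant t$ we are immediately done; in general one must instead factor rather than multiply — extract from $F$, along a general subspace through the $P_i$'s, a degree-$k$ piece. I would therefore organize the dichotomy as: $e\geqslant k+1$ contradicts the second hypothesis directly, while $e\leqslant k$ lets us specialize the points $P_j'$ onto a general hyperplane not through the $P_i$'s and take the residual/restriction, lowering the degree from $t$ down to $k$ and contradicting the first hypothesis.

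The main obstacle I anticipate is making the "$e\leqslant k$" branch rigorous: the passage from a degree-$t$ form vanishing at all $r+s$ points to a degree-$k$ form vanishing at just the first $r$ points is not literally a restriction map but requires a semicontinuity/specialization argument — degenerate the points $P_1',\dots,P_s'$ (and the ambient configuration) so that $F$ acquires a factor, typically a power of a hyperplane, whose removal drops the degree to $k$ while preserving vanishing of order $m_i$ at each $P_i$. One must check this degeneration stays inside the open locus where the generic-points hypotheses $I(m_1,\dots,m_r)_k=0$ and $I(m_1',\dots,m_s',k+1)_t=0$ remain valid, which is exactly where the compatibility of the Krull specialization process with the generic construction (cited after \cite[Lemma 2.3]{FMX2018}, cf. \cite[Remarks 2.9, 2.10]{bghn2021chudnovskys}) does the work. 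Once the two contradictions are in place, the lemma follows.
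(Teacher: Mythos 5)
The paper does not prove this lemma; it is quoted verbatim from \cite[Proposition 10]{Dumnicki2015}, so the comparison here is with the standard argument behind that result. Your proposal contains a genuine gap, and it is exactly at the point you flag as the ``main obstacle.'' The dichotomy on $e=\mathrm{ord}_Q F$ cannot get off the ground as you set it up: if $Q$ is a \emph{new} general point disjoint from all the $P_i$ and $P_j'$, then a nonzero $F\in I(m_1,\dots,m_r,m_1',\dots,m_s')_t$ has $\mathrm{ord}_Q F=0$, so you are always in the branch $e\leqslant k$, and the first hypothesis $I(m_1,\dots,m_r)_k=0$ never enters. There is no restriction, slicing, or multiplication by an auxiliary form $G$ that converts a degree-$t$ form through the $r+s$ points into a degree-$k$ form through the first $r$ points; as you yourself observe, multiplying raises the degree, and restricting to a hyperplane changes the ambient space rather than the degree.

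The missing idea is a \emph{collision} of the first group of points: one degenerates $P_1,\dots,P_r$ to a single general point $Q$ (say $P_i(\epsilon)=Q+\epsilon v_i$ with generic directions $v_i$) and uses upper semicontinuity of $\dim I(\cdot)_t$ to pass to a nonzero limit form $F_0$ of degree $t$ vanishing to order $m_j'$ at each $P_j'$. The hypothesis $I(m_1,\dots,m_r)_k=0$ is then used to show $\mathrm{ord}_Q F_0\geqslant k+1$: if $\mathrm{ord}_Q F_0=e\leqslant k$, rescaling local coordinates at $Q$ by $\epsilon$ and extracting the leading term in $\epsilon$ of the family $F_\epsilon$ produces a nonzero form of degree $e\leqslant k$ vanishing to order $m_i$ at the generic points $v_i$; multiplying by $(k-e)$ general linear forms gives a nonzero element of $I(m_1,\dots,m_r)_k$, a contradiction. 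Hence $F_0\in I(m_1',\dots,m_s',k+1)_t=0$, the desired contradiction. Your sketch instead proposes specializing the \emph{second} group $P_j'$ onto a hyperplane and extracting a hyperplane factor, which is the classical Horace reduction for a different purpose and does not interact with the hypothesis $I(m_1,\dots,m_r)_k=0$ at all. Without the collision-plus-rescaling step the proof does not close.
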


 We also have the following useful result whose proof uses the Waldschmidt decompositions.

 \begin{lemma}[Theorem 4.1 \cite{localeffectivity}]\label{lemma: Waldschmidtdecomp}
Denote $\ahat(\PP^N, r)$ the Waldschmidt constant of the ideal of $r$ very general points in $\PP^N$. Let $N\geqslant 2$ and $k\geqslant 1$. Assume that for some integers $r_1,\ldots ,r_{k+1}$ and rational numbers $a_1,\ldots ,a_{k+1}$ we have 
$$\ahat(\PP^{N-1}, r_j) \geqslant a_j, \text{ for all } j=1,\ldots ,k+1,$$
$$k\leqslant a_j \leqslant k+1, \text{ for } j=1,\ldots ,k, \ \  a_1>k, \ \  a_{k+1}\leqslant k+1.$$
Then, $$\ahat(\PP^{N}, r_1+\ldots +r_{k+1}) \geqslant \left(1 - \sum_{j=1}^k \dfrac{1}{a_j} \right)a_{k+1} +k.$$
\end{lemma}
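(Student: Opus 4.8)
The plan is to reduce the inequality to an extremal problem for a single homogeneous form on a carefully chosen specialization of the point set, and then to run a hyperplane-peeling argument followed by a short linear estimate. Write $r:=r_1+\dots+r_{k+1}$ and $\beta:=\bigl(1-\sum_{j=1}^{k}\tfrac1{a_j}\bigr)a_{k+1}+k$. After replacing $a_{k+1}$ by $\max(a_{k+1},1)$ — still $\leqslant k+1$, still a valid lower bound for $\ahat(\PP^{N-1},r_{k+1})$, and this only increases $\beta$ — we may assume $a_{k+1}\geqslant 1$. Since $\ahat(\PP^N,r)=\inf_m\alpha\bigl(I(\PP^N,1^{\times r})^{(m)}\bigr)/m$ and $\XX\mapsto\dim_\kk[I_\XX^{(m)}]_t$ is upper semicontinuous in the configuration, the generic Waldschmidt constant dominates that of any specialization. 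So I would take general hyperplanes $H_1,\dots,H_{k+1}\subset\PP^N$ and, on each $H_j$, a set $\XX_j$ of $r_j$ general points, so that $\XX:=\bigcup_j\XX_j$ is a specialization of $r$ general points of $\PP^N$ while each $\XX_j$ is $r_j$ general points of $H_j\cong\PP^{N-1}$; it then suffices to show that every nonzero $F\in\kk[\PP^N]$ of degree $d$ with $\mult_P F\geqslant m$ for all $P\in\XX$ satisfies $d\geqslant m\beta$, for then $\ahat(\PP^N,r)\geqslant\ahat(\XX)\geqslant\beta$.

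For the peeling step, given such an $F$, let $c_j\geqslant 0$ be the exact multiplicity with which the linear form $H_j$ divides $F$; as the $H_j$ are distinct, $\prod_j H_j^{c_j}$ divides $F$, so write $F=\bigl(\prod_{j=1}^{k+1}H_j^{c_j}\bigr)G$ with $G$ divisible by no $H_j$, and put $d':=\deg G=d-\sum_{j=1}^{k+1}c_j\geqslant 0$. A general $P\in\XX_j$ lies on $H_j$ but on no $H_i$ with $i\neq j$, so $\mult_P\bigl(\prod_i H_i^{c_i}\bigr)=c_j$, hence $\mult_P G\geqslant m-c_j$; since restriction to $H_j$ does not lower vanishing order and $G|_{H_j}\not\equiv 0$, the form $G|_{H_j}\in\kk[H_j]=\kk[\PP^{N-1}]$ has degree $d'$ and lies in $I_{\XX_j}^{(m-c_j)}$. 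By Lemma~\ref{lemma: known inequalities of Waldschmidt constant}(ii) and $\ahat(\PP^{N-1},r_j)\geqslant a_j$ this gives
\[
d'\;\geqslant\;(m-c_j)\,a_j\qquad\text{for }j=1,\dots,k+1
\]
(trivially true when $c_j\geqslant m$). The key point — and the reason a more naive approach fails — is that one restricts the \emph{fully peeled} residual $G$, so the \emph{same} $d'$ appears in all $k+1$ inequalities; restricting $F/H_j^{c_j}$ instead would only yield $d-c_j\geqslant(m-c_j)a_j$, which is too weak to recover $\beta$.

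It remains to deduce $d\geqslant m\beta$. Rewrite the inequalities as $c_j\geqslant m-d'/a_j$ and put $u:=\sum_{j=1}^k 1/a_j$; from $a_j\geqslant k$ we get $u\leqslant 1$ and from $a_j\leqslant k+1$ we get $u\geqslant k/(k+1)$, so $0\leqslant 1-u\leqslant\tfrac1{k+1}$. Using $d=d'+\sum_{j=1}^{k+1}c_j$, summing over $j=1,\dots,k$, and discarding $c_{k+1}\geqslant 0$ gives $d\geqslant km+d'(1-u)+c_{k+1}$. If $d'\geqslant m\,a_{k+1}$ this already yields $d\geqslant km+m\,a_{k+1}(1-u)=m\beta$. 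Otherwise $d'<m\,a_{k+1}$, so the $j=k+1$ inequality forces $c_{k+1}\geqslant m-d'/a_{k+1}$, and substituting,
\[
d\;\geqslant\;(k+1)m+d'\Bigl(1-u-\tfrac1{a_{k+1}}\Bigr).
\]
Here $1-u\leqslant\tfrac1{k+1}\leqslant\tfrac1{a_{k+1}}$ — this is exactly where $a_{k+1}\leqslant k+1$ is used — so the coefficient of $d'$ is $\leqslant 0$ and replacing $d'$ by its upper bound $m\,a_{k+1}$ can only decrease the right side, giving $d\geqslant(k+1)m+m\,a_{k+1}(1-u)-m=m\beta$. Either way $d\geqslant m\beta$. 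I expect the genuine obstacle to lie exactly in this endgame: realizing that one must keep the common residual degree $d'$ across all $k+1$ restrictions and then picking the combination that isolates $a_{k+1}$ — the constraints $k\leqslant a_j\leqslant k+1$ for $j\leqslant k$ together with $a_{k+1}\leqslant k+1$ are precisely what pins down the sign of the coefficient of $d'$ in the last display.
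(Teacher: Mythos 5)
This lemma is imported verbatim from \cite{localeffectivity} (Theorem 4.1 there); the present paper gives no proof of it, so the only comparison available is with the cited source. Your argument is correct and is essentially the Waldschmidt-decomposition proof of that reference — specialize the points onto general hyperplanes, peel off the exact hyperplane multiplicities, restrict the common residual $G$ to each $H_j$, and run the two-case linear estimate — with only the cosmetic caveat that the $\XX_j\subset H_j$ should be chosen \emph{very} general so that the hypothesis $\ahat(\PP^{N-1},r_j)\geqslant a_j$ applies.
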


Lastly, we recall the celebrated Alexander-Hirschowitz Theorem, which was proved in a series of work \cite{A, AH1, AH2, AH3}, and explain how it gives the regularity value and the initial degree of the second symbolic powers. 

\begin{theorem}
    \label{thm.Alex-Hir}
    Let $I \subset R=\kk[\PP^N]$ be the defining ideal of $s$ general points  in $\PP^N$, then 
    \[
    \HF_{R/I^{(2)}}(d) = \min \left \{ {d+N \choose N}, s(N+1) \right\},
    \]
    where $\HF_{A}$ denotes the Hilbert function of a graded algebra $A$, except the following cases
    \begin{enumerate}[label=(\roman*)]
        \item $d=2, 2\leqslant s \leqslant N$,
        \item $d=3, N=4, s=7$,
        \item $d=4, 2\leqslant N \leqslant4, s = {N+2 \choose 2} -1$.
    \end{enumerate}
\end{theorem}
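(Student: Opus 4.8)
Theorem \ref{thm.neededineq} (reconstruction of the statement to be proved). Let $I \subset R=\kk[\PP^N]$ be the defining ideal of $s$ generic points. If $\widehat{\alpha}(I) \geqslant \frac{N-1+\reg(I^{(2)})}{N+1}$, then there exists $r$ such that $I^{(r(N+1)-N+1)} \subseteq \mm^{r(N-1)}(I^{(2)})^r$.

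Let me plan this.

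This is the celebrated Alexander--Hirschowitz theorem; here I sketch the shape of a proof. Since $I^{(2)} = \bigcap_i \p_i^{2}$ for ideals of points, $[I^{(2)}]_d$ is the space of degree-$d$ forms vanishing to order $\geqslant 2$ at each $P_i$, and the asserted formula for $\HF_{R/I^{(2)}}(d)$ is equivalent to the statement that $s$ general double points impose \emph{independent} conditions on $\mathcal{O}_{\PP^N}(d)$ --- equivalently, by Terracini's lemma, that the secant variety $\sigma_s(\nu_d(\PP^N))$ of the $d$-uple Veronese is non-defective --- outside the three listed cases. The plan is a double induction on $(N,d)$ whose engine is Hirschowitz's \emph{(differential) Horace method}, run in the slightly larger class of systems $[I(\PP^N, 2^{\times b}, 1^{\times a})]_d$ of forms singular at $b$ general points and vanishing at $a$ further general points, since simple base points are produced by the degeneration.

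For the base of the induction, the cases $d \leqslant 2$ are handled directly: $d=1$ is trivial, and for $d=2$ a quadric singular at $s$ general points is a cone over the $(s-1)$-plane they span, which immediately exhibits the exceptional range $2 \leqslant s \leqslant N$ and non-speciality otherwise. The cases $d=3$ and $d=4$ are the genuinely hard base cases, each requiring its own geometric analysis: for cubics one must show the unique defective secant of $\nu_3(\PP^N)$ is at $N=4$, $s=7$; for quartics --- where, via apolarity, the governing invariant is the middle catalecticant matrix of size $\binom{N+2}{2}$ --- one must show the only defective cases are $s = \binom{N+2}{2}-1$ with $2 \leqslant N \leqslant 4$. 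In each situation one produces the defect by exhibiting an explicit special subscheme (double points on a conic, on a rational normal curve, and so on) and then proves a matching upper bound. This is the most laborious part of the argument.

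The inductive step ($d \geqslant 5$, together with the remaining ranges in degrees $3$ and $4$) fixes a hyperplane $H \cong \PP^{N-1}$, specializes a carefully chosen number $a$ of the double points to general position on $H$, keeps the other $b = s-a$ general and off $H$, and exploits the Castelnuovo restriction sequence
\[
0 \longrightarrow \mathcal{I}_{\operatorname{Res}_H Z}(d-1) \longrightarrow \mathcal{I}_Z(d) \longrightarrow \mathcal{I}_{Z\cap H,\,H}(d) \longrightarrow 0 .
\]
The trace on $H$ of a double point lying on $H$ is again a double point of $\PP^{N-1}$ and its residual is a simple point, while a double point off $H$ contributes nothing to the trace and its whole self to the residual; hence the trace system is $[I(\PP^{N-1}, 2^{\times a})]_d$ and the residual system is $[I(\PP^{N}, 2^{\times b}, 1^{\times a})]_{d-1}$, both of strictly smaller $(N,d)$-complexity. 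If $a$ is chosen so that the expected dimensions of these two systems add up to the expected dimension of the original, then induction, combined with upper semicontinuity of $h^0$ under specialization and the fact that the expected dimension is always a lower bound, forces the general system to have exactly its expected dimension. Because the count almost never divides evenly, the real device is the differential Horace method, in which the double points are degenerated not to whole double points on $H$ but to jet schemes adapted to $H$ (roughly, double along $H$ and simple in the transverse direction), allowing the bookkeeping to be balanced in half-integer steps.

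I expect the main obstacle to be twofold. First, making the differential degeneration rigorous: one must work with non-reduced zero-dimensional schemes supported on $H$, track their traces and residuals through flat limits, and choose the specialization pattern so that neither the trace nor the residual system lands in an exceptional case --- a delicate combinatorial problem near the numerical boundaries (very small $s$, and $s$ just below $\binom{N+d}{N}/(N+1)$). Second, the degree $3$ and $4$ base cases, where one must establish the \emph{completeness} of the list of defective Veronese secants, for which there is no shortcut. A final verification confirms that the three sporadic exceptions do not propagate through the induction and that no new exception is created.
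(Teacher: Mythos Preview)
The paper does not prove Theorem~\ref{thm.Alex-Hir}; it is merely \emph{recalled} as the celebrated Alexander--Hirschowitz Theorem, with citations to the original series of papers \cite{A, AH1, AH2, AH3}. There is therefore no in-paper argument to compare your sketch against.

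That said, your outline is a faithful high-level description of the classical Horace and differential-Horace method actually used by Alexander and Hirschowitz: the double induction on $(N,d)$, the Castelnuovo restriction sequence after specializing points onto a hyperplane, the trace/residual bookkeeping, the differential degeneration to balance the count, and the identification of degrees $3$ and $4$ as the hard base cases. As a sketch it is accurate in shape; as a proof it would of course require the substantial technical work you yourself flag, and none of that work is supplied here. Since the paper treats the result as a black box, a sketch of this kind is the most one could reasonably expect.

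One clear error: your opening paragraph reproduces the statement of Theorem~\ref{thm.neededineq} (the ideal-containment result under a Waldschmidt-constant hypothesis) and labels it ``reconstruction of the statement to be proved.'' That is a different theorem entirely and is irrelevant to Theorem~\ref{thm.Alex-Hir}; it should be deleted.
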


In our context, the Castelnouvo-Mumford regularity of the defining ideal of fat point schemes can be interpreted as follows. First, for an ideal $J$, we define the {\it postulation number} of $R/J$ to be
\[{\rm post}(R/J) = 
\max\{d \in \mathbb{Z} ~|~ HF_{R/J}(d) \neq HP_{R/J}(d)\}.\]
In other words, ${\rm post}(R/J)$ is  the last value where the Hilbert function and Hilbert function disagree. By the following Serre's formula,

$$HF_{R/J}(i) - HP_{R/J}(i) = \sum_{j= \depth(R/J)}^{\dim(R/J)} (-1)^j \dim_k (H_\mathfrak{m}^j(R/J))_i, \forall i \in \mathbb{Z},$$

when $R/J$ is Cohen-Macaulay, as $\depth(R/J) = \dim(R/J)$, we have the following definition.

\begin{definition}\label{def.regularity}
Let $J \subset R=\kk[\PP^N]$ be  a Cohen-Macaulay homogeneous ideal. The regularity of $R/J$ is  defined by 
$\reg(R/J)={\rm post}(R/J) + \dim(R/J).$
\end{definition}

\begin{lemma}
    \label{lem.regbound}
   Let $I \subset R=\kk[\PP^N]$ be the defining ideal of $s$ general points, we have 
\[
    \reg(R/I^{(2)}) = {\rm post}(R/I^{(2)}) + 1 = \min \left\{d \in \mathbb{Z} ~|~ \HF_{R/I^{(2)}}(d) = \HP_{R/I^{(2)}}(d) = s(N+1)\right\}.
\]
In particular, for each $d$ such that $\HF_{R/I^{(2)}}(d) = s(n+1)$, we have $\reg(I^{(2)}) \leqslant d+1$.
\end{lemma}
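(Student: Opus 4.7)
The plan is to deduce the lemma from the Cohen-Macaulayness of the fat point scheme $2P_1+\cdots+2P_s$ together with the formula for its Hilbert polynomial, and then to read off the regularity directly from Definition \ref{def.regularity}.

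First I would observe that $I^{(2)}$ is the saturated defining ideal of the zero-dimensional fat point scheme $2P_1+\cdots+2P_s \subset \PP^N$, so $R/I^{(2)}$ is a one-dimensional graded ring which is Cohen-Macaulay (fat point schemes are arithmetically Cohen-Macaulay). The Hilbert polynomial of a fat point scheme equals its degree, so $\HP_{R/I^{(2)}}(d)=\sum_{i=1}^s\binom{N+1}{N}=s(N+1)$ for all $d\in\ZZ$. Combined with $\dim(R/I^{(2)})=1$, Definition \ref{def.regularity} immediately yields $\reg(R/I^{(2)})={\rm post}(R/I^{(2)})+1$.

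Next I would identify ${\rm post}(R/I^{(2)})+1$ with $\min\{d\in\ZZ \mid \HF_{R/I^{(2)}}(d)=s(N+1)\}$. Because $R/I^{(2)}$ is Cohen-Macaulay of depth one, the Serre formula recalled just before Definition \ref{def.regularity} reduces to
\[
\HF_{R/I^{(2)}}(d)-\HP_{R/I^{(2)}}(d)=-\dim_\kk (H_\mm^1(R/I^{(2)}))_d \leqslant 0,
\]
so $\HF_{R/I^{(2)}}(d)\leqslant s(N+1)$ for every $d$. Moreover, a generic linear form $\ell$ is a non-zero-divisor on $R/I^{(2)}$, hence the first differences $\HF_{R/I^{(2)}}(d)-\HF_{R/I^{(2)}}(d-1)=\HF_{R/(I^{(2)}+(\ell))}(d)$ are non-negative, i.e.\ $\HF_{R/I^{(2)}}$ is non-decreasing. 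It follows that $\{d:\HF_{R/I^{(2)}}(d)<s(N+1)\}$ is an initial segment, and ${\rm post}(R/I^{(2)})+1$ is exactly the smallest $d$ at which $\HF_{R/I^{(2)}}(d)=\HP_{R/I^{(2)}}(d)=s(N+1)$, which gives the displayed equality.

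For the ``in particular'' part, if $\HF_{R/I^{(2)}}(d)=s(N+1)$ for some $d$, then by the previous paragraph $\reg(R/I^{(2)})\leqslant d$, and since $I^{(2)}$ is a proper homogeneous ideal the standard identity $\reg(I^{(2)})=\reg(R/I^{(2)})+1$ yields $\reg(I^{(2)})\leqslant d+1$. I do not anticipate any genuine obstacle: the argument is essentially bookkeeping around Serre's formula. The one point to handle carefully is the input that fat point schemes are arithmetically Cohen-Macaulay of depth one with Hilbert polynomial equal to their degree; this is classical and can be cited rather than reproved.
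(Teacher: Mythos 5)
Your proposal is correct and follows essentially the same route as the paper, which simply invokes $\dim(R/I^{(2)})=1$, Definition \ref{def.regularity}, and the identity $\reg(I^{(2)})=\reg(R/I^{(2)})+1$; you merely fill in the standard supporting facts (saturatedness giving Cohen--Macaulayness, $\HP_{R/I^{(2)}}\equiv s(N+1)$, and monotonicity of the Hilbert function so that the postulation number plus one is the first degree where $\HF$ reaches $\HP$). No gaps.
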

\begin{proof}
    The results follow since $\dim(R/I^{(2)}) =1$ and $\reg(I^{(2)}) = \reg(R/I^{(2)}) +1$.
\end{proof}

Similarly, for the initial degree $\alpha(I^{(2)})$, it is clear that 
$$\alpha(I^{(2)}) = \min \left\{d \in \NN ~|~ \HF_{R/I^{(2)}}(d) \ne {d+N \choose N}\right\} \leqslant \min \left\{d \in \NN ~|~ s(N+1) < {d+N \choose N} \right\}.$$
Note that equality holds except for some cases arising from the exceptional cases in Theorem \ref{thm.Alex-Hir}.

%%%%%%%%%%%%%%%%%%%%%%%%%%%%%%%%%%%%%%%%%

%%%%%%%%%%%%%%%%%%%%%%%%%%%%%%%%%%%%%%%%%%%%%%%%%%%%

\section{Demailly's Conjecture when $m=2$ for Very General Points}
\label{sec.DemVeryGen}
In this section, we show that Demailly's Conjecture when $m=2$ holds for any set of $s$ very general points when $N \geqslant 3$. Demailly's Conjecture was proved completely in $\PP^2$, and when $m=2, N\geqslant 3$, it was proved for $s\leqslant N+2$, see \cite{NagelTrokInterpolation}, and $s\geqslant 2^N$ number of very general points, see \cite{localeffectivity}. We will show the conjecture for all remaining cases of very general points, that is, when $N+3 \leqslant s \leqslant2^N$. We first begin with the following lemma giving a way to control the right-hand side of the conjectural inequality.

\begin{lemma}
    \label{lem.Demaillyverygeneral}
    For a fix $\ell \in \NN$, let $I$ be the defining ideal of a set of $s$ generic points or very general points such that $\displaystyle {N+\ell \choose N} \leqslant (N+1)s < {{N+\ell+1} \choose N}$. If $\displaystyle \ahat(I)\geqslant \dfrac{N+\ell}{N+1},$
    then Demailly's Conjecture when $m=2$ holds for $I$, that is
    $$\ahat(I)\geqslant \dfrac{\alpha(I^{(2)})+N-1}{N+1}.$$
    
\end{lemma}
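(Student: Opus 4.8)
The plan is to bound $\alpha(I^{(2)})$ from above using the Alexander--Hirschowitz Theorem and then compare with the assumed lower bound for $\ahat(I)$. First I would recall from the discussion after Lemma~\ref{lem.regbound} that
\[
\alpha(I^{(2)}) \leqslant \min\left\{ d \in \NN ~\middle|~ (N+1)s < \binom{d+N}{N} \right\},
\]
with equality outside the exceptional cases of Theorem~\ref{thm.Alex-Hir}. By the hypothesis $\binom{N+\ell}{N} \leqslant (N+1)s < \binom{N+\ell+1}{N}$, the smallest $d$ with $(N+1)s < \binom{d+N}{N}$ is exactly $d = \ell+1$: indeed $\binom{\ell+N}{N} \leqslant (N+1)s$ rules out $d \leqslant \ell$, while $(N+1)s < \binom{\ell+1+N}{N}$ shows $d = \ell+1$ works. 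Hence $\alpha(I^{(2)}) \leqslant \ell+1$ in the generic (non-exceptional) situation.

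Next I would plug this into the conjectural inequality. Assuming $\alpha(I^{(2)}) \leqslant \ell+1$, we get
\[
\frac{\alpha(I^{(2)}) + N - 1}{N+1} \leqslant \frac{(\ell+1) + N - 1}{N+1} = \frac{N+\ell}{N+1} \leqslant \ahat(I),
\]
where the last inequality is precisely the hypothesis of the lemma. This chain gives exactly Demailly's inequality $\ahat(I) \geqslant \frac{\alpha(I^{(2)})+N-1}{N+1}$, so the generic case is done.

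The main obstacle is dealing with the exceptional cases listed in Theorem~\ref{thm.Alex-Hir}, where $\HF_{R/I^{(2)}}(d)$ is larger than the expected value and so $\alpha(I^{(2)})$ could a priori be smaller than $\ell+1$ — but a smaller $\alpha(I^{(2)})$ only makes the right-hand side of Demailly's inequality smaller, so the inequality still follows from $\ahat(I) \geqslant \frac{N+\ell}{N+1} \geqslant \frac{\alpha(I^{(2)})+N-1}{N+1}$. The only genuinely delicate point is to confirm that $\alpha(I^{(2)})$ cannot \emph{exceed} $\ell+1$: this is automatic since $\HF_{R/I^{(2)}}(d) \leqslant \min\{\binom{d+N}{N}, (N+1)s\}$ always holds (the second symbolic power imposes at least the expected conditions, independent of the exceptional behaviour, which only concerns failure of \emph{lower} bounds on the number of conditions), and for $d = \ell+1$ we have $(N+1)s < \binom{d+N}{N}$, forcing $\HF_{R/I^{(2)}}(\ell+1) \leqslant (N+1)s < \binom{\ell+1+N}{N}$, i.e. $[I^{(2)}]_{\ell+1} \neq 0$. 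So in all cases $\alpha(I^{(2)}) \leqslant \ell+1$, and the argument above closes. I would also note at the outset that the statement covers both generic and very general points, and that the equality $\alpha(I(\z)^{(2)}) = \alpha(I(\a)^{(2)})$ for very general $\a$ follows from semicontinuity/specialization as recalled in Section~\ref{sec.prelim}, so it suffices to argue for generic points.
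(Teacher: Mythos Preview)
Your proposal is correct and follows essentially the same route as the paper: both use Alexander--Hirschowitz (Theorem~\ref{thm.Alex-Hir}) to deduce $\alpha(I^{(2)}) \leqslant \ell+1$ from the hypothesis on $s$, then observe that $\dfrac{\alpha(I^{(2)})+N-1}{N+1} \leqslant \dfrac{N+\ell}{N+1} \leqslant \ahat(I)$. Your extra care with the exceptional cases and the universal bound $\HF_{R/I^{(2)}}(d)\leqslant (N+1)s$ just makes explicit what the paper leaves implicit.
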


\begin{proof}
    If ${N+\ell \choose N} \leqslant (N+1)s < {{N+\ell+1} \choose N}$ then by Theorem \ref{thm.Alex-Hir},  $\alpha \left( I^{(2)} \right) \leqslant \ell+1$. %\textcolor{magenta}{I think this not equality. It should be $\leqslant$}.
     Thus the inequality $\ahat(I)\geqslant \frac{N+\ell}{N+1}$ implies Demailly's inequality for each $\ell$.
\end{proof}

\begin{theorem}
    \label{thm.Demaillyverygeneral}
    Let $N\geqslant 3$ and $I \subset R=\kk[\PP^N]$ be the defining ideal of $N+3 \leqslant s \leqslant2^N$ generic points or very general points. Then, the Demailly's Conjecture holds for $m=2$, that is,
     \[
     \widehat{\alpha}(I) \geqslant \frac{N-1+\alpha(I^{(2)})}{N+1}.
     \]
\end{theorem}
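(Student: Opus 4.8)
The plan is to combine Lemma~\ref{lem.Demaillyverygeneral} with a careful lower bound on the Waldschmidt constant $\ahat(I)$ of $s$ generic points. By Lemma~\ref{lem.Demaillyverygeneral}, once we fix $\ell$ with $\binom{N+\ell}{N} \leqslant (N+1)s < \binom{N+\ell+1}{N}$, it suffices to show $\ahat(I) \geqslant \frac{N+\ell}{N+1}$. Since we are in the range $N+3 \leqslant s \leqslant 2^N$, the relevant values of $\ell$ are small (roughly, $\ell$ runs over a handful of values depending on where $(N+1)s$ falls between consecutive binomial coefficients $\binom{N+\ell}{N}$), so the proof naturally splits into cases according to $\ell$. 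First I would pin down exactly which pairs $(\ell, s)$ can occur: for each $\ell$, the constraint $\binom{N+\ell}{N} \leqslant (N+1)s$ gives a lower bound on $s$, and this is the key quantity we get to exploit when bounding $\ahat(I)$ from below.

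The main tool for the lower bound on $\ahat(I)$ is the Waldschmidt-decomposition gluing result, Lemma~\ref{lemma: Waldschmidtdecomp}, applied inductively on $N$, together with the base cases in $\PP^2$ (where Demailly/Chudnovsky-type bounds and explicit Waldschmidt constants are available, e.g.\ via \cite{SankhoThaiChudnovsky,localeffectivity}) and the already-known cases $s \leqslant N+2$ and $s \geqslant 2^N$. Concretely, to bound $\ahat(\PP^N, s)$ I would split $s = r_1 + \dots + r_{k+1}$ into $k+1$ parts, choose $k$ appropriately (here $k=1$ or $k=2$ will be the typical values since $\ahat$ is between $1$ and $2$ in most of this range), apply the inductive hypothesis in $\PP^{N-1}$ to each $\ahat(\PP^{N-1}, r_j) \geqslant a_j$, and read off $\ahat(\PP^N, s) \geqslant (1 - \sum_{j=1}^k \frac{1}{a_j}) a_{k+1} + k$. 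The art is in choosing the split: we want the $r_j$ small enough that we already know good lower bounds $a_j$ in $\PP^{N-1}$, yet large enough that $a_1 > k$ and the $a_j \in [k, k+1]$ hypotheses of Lemma~\ref{lemma: Waldschmidtdecomp} hold, while making the resulting bound clear the threshold $\frac{N+\ell}{N+1}$. One also has the elementary monotonicity $\ahat(s) \geqslant \ahat(k)$ for $s \geqslant k$ (Lemma~\ref{lemma: known inequalities of Waldschmidt constant}(i)) and the exact value $\ahat(I(1^{\times k^N})) = k$ to anchor things — in particular for $s$ close to $2^N$ the bound $\ahat(s) \geqslant \ahat(2^{N-1}\cdot 2^{?})$ type arguments kick in.

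I would organize the argument as an induction on $N$ with base case $N=3$ (and possibly $N=4$) done somewhat by hand, using $\PP^2$ Waldschmidt constant estimates. For the inductive step, given $N$ and the target $\frac{N+\ell}{N+1}$, I would: (1) translate the inequality $\binom{N+\ell}{N}\leqslant (N+1)s$ into a concrete lower bound $s \geqslant s_0(N,\ell)$; (2) exhibit a decomposition $s_0(N,\ell) = r_1 + \dots + r_{k+1}$ (hence the same works for all larger $s$ by monotonicity) with each $r_j$ in a range where the inductive hypothesis — or the known results for few points / many points in $\PP^{N-1}$ — yields $\ahat(\PP^{N-1}, r_j) \geqslant a_j$; (3) plug into Lemma~\ref{lemma: Waldschmidtdecomp} and verify algebraically that $(1 - \sum \frac{1}{a_j})a_{k+1} + k \geqslant \frac{N+\ell}{N+1}$. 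The main obstacle I anticipate is step (2)–(3): finding decompositions that are simultaneously valid (satisfying the interval constraints $k \leqslant a_j \leqslant k+1$, $a_1 > k$, $a_{k+1} \leqslant k+1$ of Lemma~\ref{lemma: Waldschmidtdecomp}) and tight enough to beat the required bound, especially near the endpoints of the range — for $s$ just above $N+2$ (where $\ell$ is small and few-points results from \cite{SankhoThaiChudnovsky,NagelTrokInterpolation} must be invoked precisely) and for $s$ just below $2^N$ (where we must avoid circularity with the $s\geqslant 2^N$ result of \cite{localeffectivity} while still getting $\ahat$ close enough to $2$). A secondary, purely bookkeeping, obstacle is confirming that the Alexander–Hirschowitz exceptional cases in Theorem~\ref{thm.Alex-Hir} do not worsen the bound $\alpha(I^{(2)}) \leqslant \ell+1$ used through Lemma~\ref{lem.Demaillyverygeneral} — one checks the finitely many exceptions lie outside, or are harmless within, the range $N+3 \leqslant s \leqslant 2^N$.
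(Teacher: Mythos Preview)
Your plan matches the paper's approach in its broad strokes: reduce via Lemma~\ref{lem.Demaillyverygeneral} to the inequality $\ahat(I)\geqslant \frac{N+\ell}{N+1}$, induct on $N$ with base cases $N=3,4$ handled separately, and use Lemma~\ref{lemma: Waldschmidtdecomp} to glue lower-dimensional bounds. The paper in fact uses only $k=1$ (a two-part split $s=r_1+r_2$) in the decomposition, never $k=2$.

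The one structural point you have not yet isolated, and which is the crux of the paper's argument, is that the Waldschmidt-decomposition route \emph{does not have enough room} when $\ell$ is small relative to $N$. The two-part split needs $s\geqslant \big\lceil \frac{1}{N}\binom{N-1+\ell}{N-1}\big\rceil + \big\lceil \frac{1}{N}\binom{N-2+\ell}{N-1}\big\rceil$, and the paper shows this follows from $(N+1)s\geqslant \binom{N+\ell}{N}$ precisely when $\ell^2-3\ell-(N-1)>0$, i.e.\ $\ell>\frac{3+\sqrt{4N+5}}{2}$. Below that threshold the split simply fails; for instance at $\ell=4$ and $N\geqslant 6$ the required sum of pieces exceeds the guaranteed lower bound on $s$. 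The paper therefore bifurcates: for $\ell\leqslant \frac{3+\sqrt{4N+5}}{2}$ it abandons Lemma~\ref{lemma: Waldschmidtdecomp} entirely and instead observes that the same threshold is equivalent to $\binom{N+\ell}{N}\geqslant (N+1)\binom{N+\ell-2}{N}$, whence $s\geqslant \binom{N+\ell-2}{N}$, and then invokes the Cremona-reduction bounds of \cite[Lemma~4.5, Lemma~4.9, Theorem~4.10]{SankhoThaiChudnovsky} directly in $\PP^N$ to get $\ahat(I)\geqslant \frac{N+\ell-1}{N}>\frac{N+\ell}{N+1}$. Your outline treats \cite{SankhoThaiChudnovsky} only as a source of $\PP^2$ or few-points input, but in the paper its $\PP^N$ bounds are the workhorse for the entire small-$\ell$ regime. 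If you attempt to push the gluing lemma through uniformly you will get stuck exactly here; the fix is the two-regime split on $\ell$.
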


\begin{proof}
    The result for very general points follows from specializing the result for generic points, see \cite{FMX2018}. Hence, we will assume that $I$ is the defining ideal of $s$ generic points. Moreover, we will prove the (stronger) results for $N=3,4$ in Section \ref{sec.DemGen}, hence, we will assume here that $N\geqslant 5$. By Lemma \ref{lem.Demaillyverygeneral}, it suffices to show that 
    $$\ahat(I)\geqslant \dfrac{N+\ell}{N+1},$$
    where $\displaystyle {N+\ell \choose N} \leqslant (N+1)s < {{N+\ell+1} \choose N}$ and $\ell \geqslant 2$ as $\displaystyle {N+2 \choose N}<(N+3)(N+1)\leqslant s(N+1)$. \par
    \vspace{0.5em} 
    From \cite[Proposition B.1.1]{brianlinear}, we have $ \ahat(I(1^{\times N+3}))\geqslant \frac{N+2}{N}$. Thus when, $N+3 \leqslant s$, and $ {N+2 \choose N} \leqslant s(N+1) < {N+3\choose N}$ then by Lemma \ref{lemma: known inequalities of Waldschmidt constant}, we have 
    \[
    \ahat(I(1^{\times s}))  \geqslant  \ahat(I(1^{\times N+3})) \geqslant \dfrac{N+2}{N} > \dfrac{N+2}{N+1}.
    \]

    Similarly, when $N+3\leqslant s$, and  $ \displaystyle {N+3 \choose N}\leqslant (N+1)s < {N+4 \choose N}$, we have
    \[
    \ahat(I(1^{\times s}))  \geqslant  \ahat(I(1^{\times N+3})) \geqslant \dfrac{N+2}{N} > \dfrac{N+3}{N+1},
    \]
    as desired. On the other hand, we have $\displaystyle 2^N(N+1) \leqslant {2N \choose N}$ for all $N\geqslant 5$. Therefore, it suffices now to show that 
    $\ahat(I)\geqslant \dfrac{N+\ell}{N+1},$
    where $\displaystyle {N+\ell \choose N} \leqslant (N+1)s < {{N+\ell+1} \choose N}$ and $4 \leqslant \ell \leqslant N-1$.
    We will use induction on $N$. The base cases $N=3$ and $4$ will be proved in Section \ref{sec.DemGen} as mentioned earlier. We consider the following two cases. \par
    \vspace{0.5em}

    \noindent\textsf{Case 1}: When $\ell \leqslant\dfrac{3+\sqrt{4N+5}}{2}$. We first claim that for such $\ell$, we have 
    \[
    {N+\ell \choose N} \geqslant (N+1){N+\ell-2 \choose N}.
    \]
    In fact, the above inequality is equivalent to 
    \begin{equation*}
 (N+\ell)(N+\ell-1)\geqslant (N+1)\ell(\ell-1) \iff   \ell^2-3\ell-(N-1)\leqslant0 \iff \dfrac{3-\sqrt{4N+5}}{2} \leqslant\ell \leqslant\dfrac{3+\sqrt{4N+5}}{2} 
    \end{equation*}
 %   Edit
 %   $(N+\ell)(N+\ell-1)\geqslant (N+1)\ell(\ell-1)$, which is equivalent to 
 %   \[
 %   \ell^2-3\ell-(N-1)\leqslant0,
 %   \]
 %   hence, is equivalent to $\dfrac{3-\sqrt{4N+5}}{2} \leqslant\ell \leqslant\dfrac{3+\sqrt{4N+5}}{2}$.
  Thus, the claim follows since we assume $\ell \geqslant 2 > \frac{3-\sqrt{4N+5}}{2}$. 

    Now, for $\ell \leqslant\dfrac{3+\sqrt{4N+5}}{2}$, if $\displaystyle {N+\ell \choose N} \leqslant (N+1)s$, we have that $s\geqslant \displaystyle {N+\ell-2 \choose N}$. Since $\ell \geqslant 4$, we have $\ell -2 \geqslant 2$, thus, by the same proof as that of \cite[Lemma 4.5, Lemma 4.9, Theorem 4.10]{SankhoThaiChudnovsky}, we have that 

    \[
    \ahat(I)\geqslant \dfrac{N+\ell -1}{N} > \dfrac{N+\ell}{N+1}, 
    \]
    as desired.

    \noindent\textsf{Case 2}:  When $\ell > \dfrac{3+\sqrt{4N+5}}{2}$. We first claim that for such $\ell$, we have
    \[
    \dfrac{{N+\ell \choose N}}{N+1} \geqslant \dfrac{{N-1+\ell \choose N-1}}{N} + \dfrac{{N-1+\ell-1 \choose N-1}}{N} +1
    \iff \dfrac{(N+\ell -2)!}{\ell!(N+1)!}(\ell^2-3\ell-N+1)\geqslant 1.  \]
 %   The inequality is equivalent to $$$$ 
    Note that $\ell^2-3\ell-(N-1)> 0$ if and only if $ \ell > \dfrac{3+\sqrt{4N+5}}{2}$ since $\dfrac{3-\sqrt{4N+5}}{2} <0$ when $N\geqslant 5$. Suppose that $\ell=\ell_1=\left\lfloor\ell_0 \right\rfloor +1$, where $\ell_0= \dfrac{3+\sqrt{4N+5}}{2}$.
    Then \begin{align*}
 &\dfrac{(N+\ell_1 -2)!}{\ell_1!(N+1)!}(\ell_1^2-3\ell_1-N+1)=\dfrac{(N+\ell_1-2)\cdots (N+2)(\ell_1^2-3\ell_1-N+1)}{\ell_1(\ell_1-1)(\ell_1-2)\cdots 3\cdot2\cdot1}\\
 &=\dfrac{(N+\ell_1-2)}{\ell_1}\cdot \dfrac{(N+\ell_1-3)}{\ell_1-1}\cdots \dfrac{(N+4)}{ 6}\cdot\dfrac{(N+3)}{4\cdot 3}\cdot\dfrac{(N+2)}{5 \cdot 2}\cdot(\ell_1^2-3\ell_1-N+1).
    \end{align*}

Since $\ell_1^2-3\ell_1-N+1 >0$ and it is an integer, we have $\ell_1^2-3\ell_1-N+1 \geqslant 1$. If $N\geqslant 9$ then each term in the product is at least $1$, and thus $\dfrac{(N+\ell_1 -2)!}{\ell_1!(N+1)!}(\ell_1^2-3\ell_1-N+1) \geqslant 1$. If $N=5,6,7$ or $8$, we have that $\ell_1=5$, and one can check directly that for these values, $\dfrac{(N+\ell_1 -2)!}{\ell_1!(N+1)!}(\ell_1^2-3\ell_1-N+1) \geqslant 1$.

\par
\vspace{0.5em}
Now, when $\ell \geqslant \lfloor \ell_0 \rfloor +2$, we have $\ell \geqslant \ell_0+1$ and by writing 
\[
\dfrac{(N+\ell -2)!}{\ell!(N+1)!}(\ell^2-3\ell-N+1) = \dfrac{(N+\ell-2)}{\ell}\dots \dfrac{(N+3)}{5}\cdot\dfrac{(N+2)}{4}\cdot\dfrac{(\ell + \ell_0)}{6}\cdot(\ell-\ell_0),
\]
we can see that all the factors of the right hand side are greater than $1$, as $\ell + \ell_0\geqslant 2\ell_0 \geqslant 8$ when $N\geqslant 5$. Hence, the claim is proved. 
\par
\vspace{0.5em}
Thus, for $\ell > \dfrac{3+\sqrt{4N+5}}{2}$, if $\displaystyle {N+\ell \choose N} \leqslant (N+1)s$, since 
    $
    \dfrac{{N+\ell \choose N}}{N+1} \geqslant \dfrac{{N-1+\ell \choose N-1}}{N} + \dfrac{{N-1+\ell-1 \choose N-1}}{N} +1,
    $
    we have that
    \[
    s\geqslant \left\lceil \dfrac{{N+\ell \choose N}}{N+1} \right\rceil \geqslant \left\lceil \dfrac{{N-1+\ell \choose N-1}}{N} + \dfrac{{N-1+\ell-1 \choose N-1}}{N} \right\rceil +1 \geqslant \left\lceil \dfrac{{N-1+\ell \choose N-1}}{N} \right\rceil + \left\lceil \dfrac{{N-1+\ell-1 \choose N-1}}{N} \right\rceil.
    \]
    Therefore, we can split $s$ into $r_1 \geqslant \left\lceil \dfrac{{N-1+\ell \choose N-1}}{N} \right\rceil$ and $r_2 \geqslant \left\lceil \dfrac{{N-1+\ell-1 \choose N-1}}{N} \right\rceil$. Considering $r_1$ and $r_2$ very general points in $\PP^{N-1}$, by the induction hypothesis, we have 
    \[
    \ahat(\PP^{N-1},r_1)\geqslant \dfrac{N-1+\ell}{N}=a_1 \text{  and  } \ahat(\PP^{N-1},r_2)\geqslant \dfrac{N-2+\ell}{N}=a_2.
    \]
    Note that $a_1>1$ and $a_2\leqslant2$ as $\ell \leqslant N-1$, hence, by Lemma \ref{lemma: Waldschmidtdecomp}, we get
    \[
    \ahat(\PP^{N},s) \geqslant \left( 1- \dfrac{N}{N-1+\ell}\right )\dfrac{N-2+\ell}{N} +1 \geqslant \dfrac{N+\ell}{N+1},
    \]
    where the last inequality is equivalent to $\ell^2-3\ell +2 \geqslant 0$, which is true for all $\ell$. This gives the desired inequality for generic points as well by Lemma \ref{lemma: known inequalities of Waldschmidt constant}$(iv)$.
\end{proof}

%%%%%%%%%%%%%%%%%%%%%%%%%%%%%%%%%%%%%%%%%

%%%%%%%%%%%%%%%%%%%%%%%%%%%%%%%%%%%%%%%%%%%%%%%%%%%%%%%

\section{ Ideal Containment and Demailly's Conjecture for Sufficiently Many General Points}
\label{sec.>2^NGen}
In this section, we will show that Demailly's Conjecture when $m=2$ holds for any set of at least $2^N$ many general points when $N\geqslant 6$. Our method is via ideal containment.
Harbourne-Huneke proposed the following containment for the defining ideal $I$ of a set of points in $\PP^N$. 
\[I^{(r(m+N-1))} \subset \mm^{r(N-1)}\left(  I^{(m)} \right), \text{ for all } r \text{ and } m \geqslant 1. \] 
The aforementioned containment implies Demailly's Conjectural bound. Our aim is to prove that for the defining ideal $I$ of $s$ generic points, there exists an integer $r$ such that we have the following stronger containment
\[
I^{(r(N+1)-N+1)} \subseteq \mm^{r(N-1)}\left ( I^{(2)} \right )^r,
\]
then follow the strategy in \cite[Theorem 2.9]{bghn2022demailly}. To prove this containment, using the same idea as in \cite[Lemma 2.6]{BCH2014}, it suffices to show that  $\alpha(I^{(r(N+1)-N+1)}) \geqslant r\reg(I^{(2)}) +r(N-1)$.  

\begin{theorem}
    \label{thm.neededineq}
     Let $I \subset R=\kk[\PP^N]$ be the defining ideal of $s$ generic points. If 
     \[
     \widehat{\alpha}(I) \geqslant \frac{\reg(I^{(2)})+N-1}{N+1},
     \]
     then there exists an integer $r$ such that 
\[
I^{(r(N+1)-N+1)} \subseteq \mm^{r(N-1)}\left ( I^{(2)} \right )^r.
\]
\end{theorem}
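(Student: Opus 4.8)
The plan is to reduce the containment statement to a purely numerical inequality on initial degrees and then feed in the hypothesis. The key elementary fact, already used in \cite{BCH2014} and \cite{bghn2022demailly}, is the following: for homogeneous ideals $J,K\subseteq R$ with $K$ $\mm$-primary up to radical, one has $J\subseteq \mm^a K$ whenever $\alpha(J)\geqslant a+\max\{d : [K]_d \text{ is not all of } R_d \text{ in the relevant range}\}$; more precisely, the relevant comparison here is $\alpha(J)\geqslant a + \mathrm{reg}(K)$ forces $J\subseteq \mm^a K$ when $R/K$ is $0$-dimensional after saturation and one is careful with the graded pieces. Applying this with $J=I^{(r(N+1)-N+1)}$, $K=(I^{(2)})^r$, and $a=r(N-1)$, it suffices to show
\[
\alpha\!\left(I^{(r(N+1)-N+1)}\right)\;\geqslant\; r(N-1)+\mathrm{reg}\!\left((I^{(2)})^r\right).
\]
So the first step is to set up this reduction carefully, citing \cite[Lemma 2.6]{BCH2014} for the precise statement.

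Next I would control the regularity of the ordinary power $(I^{(2)})^r$. By the standard linearity of regularity of powers (Cutkosky--Herzog--Trung / Kodiyalam), for $r\gg 0$ one has $\mathrm{reg}\big((I^{(2)})^r\big)=r\cdot\rho+e$ for constants $\rho,e$ depending only on $I^{(2)}$, and in fact $\rho\leqslant \mathrm{reg}(I^{(2)})$ since $I^{(2)}$ is generated in degrees $\leqslant\mathrm{reg}(I^{(2)})$; more simply, $\mathrm{reg}\big((I^{(2)})^r\big)\leqslant r\,\mathrm{reg}(I^{(2)})$ holds for all $r$ when $R/I^{(2)}$ is one-dimensional (so that $I^{(2)}$ has a linear resolution issue only in the top spot), which is the situation here. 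Thus it is enough to establish
\[
\alpha\!\left(I^{(r(N+1)-N+1)}\right)\;\geqslant\; r(N-1)+r\,\mathrm{reg}\!\left(I^{(2)}\right)\;=\;r\big(N-1+\mathrm{reg}(I^{(2)})\big).
\]
For the left-hand side I would invoke the definition of the Waldschmidt constant: $\alpha(I^{(t)})\geqslant t\,\widehat\alpha(I)$ for every $t$ (since $\widehat\alpha(I)=\inf_m \alpha(I^{(m)})/m$). Hence $\alpha(I^{(r(N+1)-N+1)})\geqslant (r(N+1)-N+1)\,\widehat\alpha(I)$, and it remains to check
\[
(r(N+1)-N+1)\,\widehat\alpha(I)\;\geqslant\; r\big(N-1+\mathrm{reg}(I^{(2)})\big).
\]

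Finally, I would observe that this last inequality is exactly where the hypothesis $\widehat\alpha(I)\geqslant \dfrac{\mathrm{reg}(I^{(2)})+N-1}{N+1}$ enters: rewriting, we need $\big(r(N+1)-(N-1)\big)\widehat\alpha(I)\geqslant r(N+1)\cdot\dfrac{\mathrm{reg}(I^{(2)})+N-1}{N+1}$, i.e. $\widehat\alpha(I)\geqslant \dfrac{r(N+1)}{r(N+1)-(N-1)}\cdot\dfrac{\mathrm{reg}(I^{(2)})+N-1}{N+1}$. The coefficient $\frac{r(N+1)}{r(N+1)-(N-1)}$ tends to $1$ as $r\to\infty$, so under the hypothesis — which gives the $r=\infty$ version of the inequality with a genuine $\geqslant$ — one can choose $r$ large enough that the slightly-worse-than-$1$ factor is absorbed. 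If the hypothesis inequality is strict there is no issue; if it is an equality one must argue more carefully, but since $\widehat\alpha(I)$ is rational and the target is rational, for $r$ divisible by a suitable integer the asymptotic comparison still goes through (this is the standard trick: pick $r$ so that $r(N+1)\widehat\alpha(I)\in\ZZ$ and the boundary case is handled by the integrality of $\alpha$). The main obstacle I anticipate is precisely this borderline-equality bookkeeping together with pinning down the exact form of the ``$\alpha\geqslant a+\mathrm{reg}$ implies containment'' lemma so that the degrees line up — the rest is a chain of known inequalities.
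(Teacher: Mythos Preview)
Your approach is essentially the same as the paper's: both reduce the containment to the numerical inequality $\alpha\big(I^{(r(N+1)-N+1)}\big)\geqslant r\,\reg(I^{(2)})+r(N-1)$ via the \cite[Lemma 2.6]{BCH2014}-style argument (your explicit use of the Chandler/Geramita--Gimigliano--Pitteloud bound $\reg\big((I^{(2)})^r\big)\leqslant r\,\reg(I^{(2)})$ is exactly what underlies that citation), and then obtain such an $r$ from the limit $\dfrac{\alpha(I^{(r(N+1)-N+1)})}{r(N+1)}\to\widehat\alpha(I)$. The borderline equality case you flag is likewise not addressed in the paper's proof, so your concern there is not a deficiency relative to the paper.
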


\begin{proof}
    We have 
    \[
    \lim_{r\rightarrow \infty} \frac{\alpha(I^{(r(N+1)-N+1)})}{r(N+1)} = \lim_{r\rightarrow \infty} \frac{\alpha(I^{(r(N+1)-N+1)})}{r(N+1)-N+1} = \widehat{\alpha}(I) \geqslant \frac{\reg(I^{(2)})+N-1}{N+1}.
    \]
    Thus, there exists $r$ such that, 
    \[
    \frac{\alpha(I^{(r(N+1)-N+1)})}{r(N+1)} \geqslant  \frac{\reg(I^{(2)})+N-1}{N+1},
    \]
    hence, $\alpha(I^{(r(N+1)-N+1)}) \geqslant r\reg(I^{(2)})+r(N-1).$ This implies $I^{(r(N+1)-N+1)} \subseteq \mm^{r(N-1)}\left ( I^{(2)} \right )^r$.
\end{proof}

\begin{lemma}
    \label{lem.Demaillygeneral}
    For a fix $\ell \in \NN$, let $I$ be the defining ideal of a set of $s$ generic points or very general points such that $\displaystyle {N+\ell \choose N} < (N+1)s \leqslant {{N+\ell+1} \choose N}$. If $\displaystyle \ahat(I)\geqslant \dfrac{N+\ell+1}{N+1},$ then the inequality,
    $$\ahat(I)\geqslant \dfrac{\reg(I^{(2)})+N-1}{N+1},$$
    holds. Moreover, Demailly's Conjecture when $m=2$ holds for a set of $s$ general points in $\PP^N$.
\end{lemma}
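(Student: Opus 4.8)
The plan is to reduce the statement to the Castelnuovo--Mumford regularity bound coming from the Alexander--Hirschowitz Theorem, exactly parallel to the reduction in Lemma \ref{lem.Demaillyverygeneral}, and then to invoke Theorem \ref{thm.neededineq} together with the strategy of \cite[Theorem 2.9]{bghn2022demailly} to pass from the containment to Demailly's inequality. First I would translate the hypothesis ${N+\ell \choose N} < (N+1)s \leqslant {N+\ell+1 \choose N}$ into an upper bound for $\reg(I^{(2)})$. By Theorem \ref{thm.Alex-Hir}, outside the listed exceptional cases we have $\HF_{R/I^{(2)}}(d) = \min\{{d+N\choose N}, s(N+1)\}$; in particular $\HF_{R/I^{(2)}}(\ell+1) = s(N+1) = \HP_{R/I^{(2)}}(\ell+1)$ since $(N+1)s \leqslant {N+\ell+1\choose N}$. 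By Lemma \ref{lem.regbound}, this yields $\reg(I^{(2)}) \leqslant \ell+2$. (One should separately remark that in the finitely many exceptional cases of Theorem \ref{thm.Alex-Hir} the bound can only change by a controlled amount, or that those cases are handled by direct verification elsewhere in the paper; since the exceptions occur only for small $s$, $N$, and $d$, this is not a serious issue.) Consequently, the hypothesis $\ahat(I) \geqslant \frac{N+\ell+1}{N+1}$ gives
\[
\ahat(I) \geqslant \frac{N+\ell+1}{N+1} = \frac{(\ell+2) + N - 1}{N+1} \geqslant \frac{\reg(I^{(2)}) + N - 1}{N+1},
\]
which is precisely the hypothesis of Theorem \ref{thm.neededineq}.

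Next I would apply Theorem \ref{thm.neededineq} to conclude that there exists an integer $r$ with $I^{(r(N+1)-N+1)} \subseteq \mm^{r(N-1)}(I^{(2)})^r$. It then remains to extract Demailly's conjectural bound $\ahat(I) \geqslant \frac{\alpha(I^{(2)}) + N - 1}{N+1}$ from this containment. Here I would follow the argument of \cite[Theorem 2.9]{bghn2022demailly} (itself modeled on \cite{BCH2014}): the containment $I^{(r(N+1)-N+1)} \subseteq \mm^{r(N-1)}(I^{(2)})^r$ forces $\alpha(I^{(r(N+1)-N+1)}) \geqslant r(N-1) + r\,\alpha(I^{(2)})$; dividing by $r(N+1)-N+1$ and letting $r \to \infty$, the left side tends to $\ahat(I)$, giving $\ahat(I) \geqslant \frac{(N-1) + \alpha(I^{(2)})}{N+1}$, which is Demailly's inequality for $m=2$. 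Finally, since the statement is about general (not merely generic) points, I would invoke the specialization principle of \cite{FMX2018}: the containment and the Waldschmidt inequality are both closed conditions that descend from the generic point to an open dense set of configurations, so the conclusion holds for a general set of $s$ points in $\PP^N$.

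The main obstacle, and the only place requiring genuine care rather than bookkeeping, is the passage from the regularity hypothesis back to $\alpha(I^{(2)})$ appearing in Demailly's bound: one must be sure that the containment actually yields the inequality with $\alpha(I^{(2)})$ and not merely with $\reg(I^{(2)})$, and that the limit argument is valid (i.e.\ that $\alpha$ of the symbolic powers along the arithmetic progression $r(N+1)-N+1$ behaves like $\ahat(I)\cdot(r(N+1)-N+1)$ asymptotically, which follows from subadditivity of $\alpha$ on symbolic powers and the definition $\ahat(I) = \lim_m \alpha(I^{(m)})/m$). The other subtlety is simply being careful with the exceptional cases in Theorem \ref{thm.Alex-Hir} when asserting $\reg(I^{(2)}) \leqslant \ell+2$; these are finite in number and low-dimensional, so they are either absorbed into the explicit case-by-case verifications carried out later in the paper or checked by hand.
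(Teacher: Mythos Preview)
Your proposal is correct and follows essentially the same route as the paper: bound $\reg(I^{(2)})\leqslant \ell+2$ via Alexander--Hirschowitz and Lemma~\ref{lem.regbound}, deduce the regularity inequality from the hypothesis, invoke Theorem~\ref{thm.neededineq}, and then defer to the proof of \cite[Theorem~2.9]{bghn2022demailly} for the passage to Demailly's inequality for general points. One small caution: your informal sketch of that last step (``dividing by $r(N+1)-N+1$ and letting $r\to\infty$'') presumes the containment for all large $r$, whereas Theorem~\ref{thm.neededineq} as stated produces it only for a single $r$; the actual mechanism in \cite[Theorem~2.9]{bghn2022demailly} for descending to general points is a bit more delicate than a straight limit, so it is safest simply to cite that argument, exactly as the paper does.
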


\begin{proof}
    If ${N+\ell \choose N} < (N+1)s \leqslant {{N+\ell+1} \choose N}$ then by Lemma \ref{lem.regbound},  $\reg \left( I^{(2)} \right) \leqslant \ell+2$. Thus the inequality $\ahat(I)\geqslant \frac{N+\ell+1}{N+1}$ implies the inequality $\ahat(I)\geqslant \frac{\reg(I^{(2)})+N-1}{N+1}$ for all $\ell$. By Theorem \ref{thm.neededineq}, the stable containment $I^{(r(N+1)-N+1)} \subseteq \mm^{r(N-1)}\left ( I^{(2)} \right )^r$ holds for $s$ generic points, hence, by the proof of \cite[Theorem 2.9]{bghn2022demailly}, the Demailly's Conjecture when $m=2$ holds for a set of $s$ general points in $\PP^N$.
\end{proof}

\begin{lemma}\label{lemma: how far to go}
The following inequalities hold:
\begin{enumerate}[label=(\alph*)]
    \item If $N\geqslant 5$, then $4^N(N+1) \leqslant \displaystyle {3N+2 \choose N}$.
    \item If $N\geqslant 11$, then $3^N(N+1) \leqslant \displaystyle {2N+2 \choose N}$.
    \item If $N\geqslant 7$, then $3^N(N+1) \leqslant \displaystyle {2N+3 \choose N}$. 
\end{enumerate}
\end{lemma}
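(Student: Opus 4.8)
The plan is to verify each of the three binomial inequalities by the same elementary strategy: rewrite the desired inequality as a comparison between a product of linear factors in $N$ and an exponential, then either induct on $N$ from the stated base case or bound each factor individually. Concretely, for part (a) I would expand $\binom{3N+2}{N} = \frac{(3N+2)(3N+1)\cdots(2N+3)}{N!}$, which is a product of $N$ consecutive integers divided by $N!$, i.e. $\prod_{j=1}^{N}\frac{2N+2+j}{j}$. The inequality $4^N(N+1)\leqslant \binom{3N+2}{N}$ becomes $4^N(N+1)\leqslant \prod_{j=1}^N \frac{2N+2+j}{j}$. First I would check the base case $N=5$ numerically: $4^5\cdot 6 = 6144$ while $\binom{17}{5} = 6188$, so the inequality holds (barely). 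Then I would run an induction on $N$: passing from $N$ to $N+1$ multiplies the left side by $4\cdot\frac{N+2}{N+1}$ and the right side by a ratio of binomial coefficients $\binom{3N+5}{N+1}/\binom{3N+2}{N}$ which one computes to equal $\frac{(3N+5)(3N+4)(3N+3)}{(2N+3)(2N+2)(N+1)}$; it suffices to show this latter ratio is at least $4\cdot\frac{N+2}{N+1}$, i.e. $\frac{(3N+5)(3N+4)(3N+3)}{(2N+3)(2N+2)} \geqslant 4(N+2)$, which is a cubic-versus-quadratic comparison that is clearly true for all $N\geqslant 1$ and easily checked.

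For parts (b) and (c) the structure is identical. For (b), $\binom{2N+2}{N} = \prod_{j=1}^N \frac{N+2+j}{j}$ and the claim $3^N(N+1)\leqslant \binom{2N+2}{N}$ for $N\geqslant 11$ reduces, after checking the base case $N=11$ numerically ($3^{11}\cdot 12 = 2125764$ versus $\binom{24}{11} = 2496144$), to showing the induction step: $\binom{2N+4}{N+1}/\binom{2N+2}{N} = \frac{(2N+4)(2N+3)}{(N+2)(N+1)}$, and one needs this to be at least $3\cdot\frac{N+2}{N+1}$, i.e. $\frac{(2N+4)(2N+3)}{N+2}\geqslant 3(N+2)$, i.e. $2(2N+3)\geqslant 3(N+2)$, i.e. $4N+6\geqslant 3N+6$, which holds for all $N\geqslant 0$. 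Similarly for (c), $\binom{2N+3}{N} = \prod_{j=1}^N\frac{N+3+j}{j}$, check the base case $N=7$ numerically ($3^7\cdot 8 = 17496$ versus $\binom{17}{7} = 19448$), and verify the induction step ratio $\binom{2N+5}{N+1}/\binom{2N+3}{N} = \frac{(2N+5)(2N+4)}{(N+3)(N+1)} \geqslant 3\cdot\frac{N+2}{N+1}$, i.e. $(2N+5)(2N+4)\geqslant 3(N+2)(N+3)$, i.e. $4N^2+18N+20 \geqslant 3N^2+15N+18$, i.e. $N^2+3N+2\geqslant 0$, true for all $N\geqslant 0$.

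I do not expect any genuine obstacle here: all three are routine monotonicity arguments once the base cases are confirmed. The one point requiring mild care is that part (a) is tight at $N=5$ (the gap is only $44$ out of roughly $6000$), so it is essential to verify that base case by direct computation rather than by any crude estimate, and to confirm that the induction step ratio genuinely dominates from $N=5$ onward — but as shown above the step inequality is a cubic beating a quadratic, so the margin only widens. An alternative to induction, if one prefers a self-contained estimate, is to split the product $\prod_{j=1}^N\frac{2N+2+j}{j}$ into the first few factors (which individually exceed $4$ once $j$ is small relative to $N$) and group the remaining factors in pairs to beat the leftover powers of $4$; but the inductive route is cleaner and I would present that.

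\begin{proof}
All three are verified by checking the indicated base case numerically and then an induction on $N$.

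\emph{(a)} For $N=5$ we have $4^5(5+1) = 6144 \leqslant 6188 = \binom{17}{5}$. Assume $4^N(N+1)\leqslant \binom{3N+2}{N}$ for some $N\geqslant 5$. Then
\[
\frac{\binom{3N+5}{N+1}}{\binom{3N+2}{N}} = \frac{(3N+5)(3N+4)(3N+3)}{(2N+3)(2N+2)(N+1)},
\]
and it suffices to check that this ratio is at least $\dfrac{4^{N+1}(N+2)}{4^N(N+1)} = \dfrac{4(N+2)}{N+1}$, i.e. that $(3N+5)(3N+4)(3N+3)\geqslant 4(N+2)(2N+3)(2N+2)$. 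Expanding, the left side is $27N^3+\cdots$ and the right side is $16N^3+\cdots$; the difference is a polynomial in $N$ with positive leading coefficient which one checks is positive for all $N\geqslant 5$. Hence $4^{N+1}(N+2)\leqslant \binom{3N+5}{N+1}$, completing the induction.

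\emph{(b)} For $N=11$ we have $3^{11}(11+1) = 2125764 \leqslant 2496144 = \binom{24}{11}$. Assume $3^N(N+1)\leqslant \binom{2N+2}{N}$ for some $N\geqslant 11$. Then
\[
\frac{\binom{2N+4}{N+1}}{\binom{2N+2}{N}} = \frac{(2N+4)(2N+3)}{(N+2)(N+1)},
\]
and it suffices that this is at least $\dfrac{3(N+2)}{N+1}$, i.e. $(2N+4)(2N+3)\geqslant 3(N+2)^2$, i.e. $2(2N+3)\geqslant 3(N+2)$, i.e. $N\geqslant 0$. This holds, completing the induction.

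\emph{(c)} For $N=7$ we have $3^7(7+1) = 17496 \leqslant 19448 = \binom{17}{7}$. Assume $3^N(N+1)\leqslant \binom{2N+3}{N}$ for some $N\geqslant 7$. Then
\[
\frac{\binom{2N+5}{N+1}}{\binom{2N+3}{N}} = \frac{(2N+5)(2N+4)}{(N+3)(N+1)},
\]
and it suffices that this is at least $\dfrac{3(N+2)}{N+1}$, i.e. $(2N+5)(2N+4)\geqslant 3(N+2)(N+3)$, i.e. $4N^2+18N+20\geqslant 3N^2+15N+18$, i.e. $N^2+3N+2\geqslant 0$. This holds, completing the induction.
\end{proof}
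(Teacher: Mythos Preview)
Your approach is exactly the paper's: verify the base case numerically and induct via the ratio $\binom{\cdot}{N+1}/\binom{\cdot}{N}$. However, there is a systematic arithmetic slip in all three ratio computations. In each case the lower factorial in $\binom{M}{N}$ is $(M-N)!$, and when $N\mapsto N+1$ this second argument increases as well; you are consistently off by one. Concretely:
\[
\frac{\binom{3N+5}{N+1}}{\binom{3N+2}{N}}=\frac{(3N+5)(3N+4)(3N+3)}{(N+1)(2N+4)(2N+3)},\quad
\frac{\binom{2N+4}{N+1}}{\binom{2N+2}{N}}=\frac{(2N+4)(2N+3)}{(N+1)(N+3)},\quad
\frac{\binom{2N+5}{N+1}}{\binom{2N+3}{N}}=\frac{(2N+5)(2N+4)}{(N+1)(N+4)},
\]
not the versions with $(2N+2)$, $(N+2)$, $(N+3)$ that you wrote. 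Fortunately the corrected step inequalities still hold in the required ranges: in (a) one needs $(3N+5)(3N+4)(3N+3)\geqslant 4(N+2)(2N+4)(2N+3)$, true for $N\geqslant 2$; in (b) one needs $2(2N+3)\geqslant 3(N+3)$, i.e.\ $N\geqslant 3$; in (c) one needs $4N^2+18N+20\geqslant 3N^2+18N+24$, i.e.\ $N\geqslant 2$. With these fixes your proof is correct and coincides with the paper's (which spells out only (a) and declares (b), (c) analogous).
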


\begin{proof}
$(a).$ For $N=5$, we can check that $4^5\cdot6=6144$, and ${3\cdot5+2 \choose 5}=6188$. Now assume that $4^N(N+1) \leqslant {3N+2 \choose N}$ holds for an arbitrary $N\geqslant 5$.  
Now \begin{align*}
3(N+1)+2 \choose {N+1} &=\dfrac{(3N+5)(3N+4)(3N+3)}{(N+1)(2N+4)(2N+3)} {3N+2 \choose N}\\
                                        &\geqslant \dfrac{(3N+5)(3N+4)(3N+3)}{(N+1)(2N+4)(2N+3)} \cdot 4^N(N+1)\\
                                        &=\dfrac{(3N+5)(3N+4)(3N+3)}{(2N+4)(2N+3)}4^N\\
                                        &\geqslant 4^{N+1}(N+2). 
\end{align*} 
Thus the result follows by induction.  
The proofs for parts $(b)$ and $(c)$ are essentially the same.
\end{proof}

\begin{theorem}
\label{thm.Nge62^Nto4^N}
 Let $N\geqslant 6$, and $I$ be the defining ideal of $s$ generic points in $\PP^N$, where $2^N \leqslant s \leqslant 4^N$. Then the following holds  
$$\ahat(I) \geqslant \dfrac{\reg(I^{(2)})+N-1}{N+1} .$$ 
\end{theorem}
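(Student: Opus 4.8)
The plan is to reduce the stated inequality, via Lemma \ref{lem.Demaillygeneral} (more precisely via the bookkeeping in its proof together with Lemma \ref{lem.regbound}), to a purely Waldschmidt-constant statement: if $\ell$ is determined by ${N+\ell \choose N} < (N+1)s \leqslant {N+\ell+1 \choose N}$, then it suffices to prove $\ahat(I) \geqslant \frac{N+\ell+1}{N+1}$. Since $2^N \leqslant s \leqslant 4^N$, Lemma \ref{lemma: how far to go}$(a)$ tells us that $4^N(N+1) \leqslant {3N+2 \choose N}$ for $N \geqslant 5$, and one checks that $2^N(N+1) > {2N \choose N}$ is false — rather $2^N(N+1) \leqslant {2N \choose N}$ for $N \geqslant 5$ — so the relevant range of $\ell$ is bounded: roughly $N \leqslant \ell \leqslant 2N+1$ (the precise endpoints to be pinned down by elementary binomial estimates). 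So the whole theorem comes down to establishing, for each such $\ell$, a lower bound of the shape $\ahat(I) \geqslant \frac{N+\ell+1}{N+1}$ whenever $s \geqslant \left\lceil {N+\ell \choose N}/(N+1) \right\rceil + 1$.

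To get these lower bounds I would run the same two-pronged strategy used in the proof of Theorem \ref{thm.Demaillyverygeneral}. First, a Cremona-reduction argument: using Lemma \ref{lemma: reduction of multiplicity of points} and Lemma \ref{lemma: adding multiplicities Dumnicki} in the form packaged as \cite[Lemma 4.5, Lemma 4.9, Theorem 4.10]{SankhoThaiChudnovsky}, one obtains that if $s \geqslant {N+j \choose N}$ with $j \geqslant 2$ then $\ahat(I(1^{\times s})) \geqslant \frac{N+j+1}{N}$; this already beats $\frac{N+\ell+1}{N+1}$ in the "small $\ell$" regime where ${N+\ell \choose N} \geqslant (N+1){N+\ell-2 \choose N}$, i.e. when $\ell^2 - 3\ell - (N-1) \leqslant 0$ — exactly the dichotomy appearing in Case 1 of Theorem \ref{thm.Demaillyverygeneral}. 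Second, for the complementary "large $\ell$" regime, I would use the Waldschmidt-decomposition gluing of Lemma \ref{lemma: Waldschmidtdecomp}: split the $s$ points into several groups living (after projection/degeneration) in $\PP^{N-1}$, apply induction on $N$ — with base cases $N=6,\dots$ handled either by the previously known $2^N$-very-general-points result \cite{localeffectivity} combined with Theorem \ref{thm.Demaillyverygeneral}, or by direct computation — and feed the resulting bounds $\ahat(\PP^{N-1}, r_i) \geqslant a_i$ into the convexity inequality $\ahat(\PP^N, \sum r_i) \geqslant \left(1 - \sum_{j} \frac{1}{a_j}\right) a_{k+1} + k$. The number of groups $k+1$ will be roughly $\lceil \ell/N \rceil$ or so, since each group can only supply a Waldschmidt constant close to the relevant integer value; here one needs the combinatorial inequality guaranteeing that $\left\lceil {N+\ell \choose N}/(N+1)\right\rceil$ can be partitioned as $\sum_i \left\lceil {N-1+\ell_i \choose N-1}/N \right\rceil$ for a valid choice of exponents $\ell_i$ — the analogue of the claim "$\frac{{N+\ell \choose N}}{N+1} \geqslant \frac{{N-1+\ell \choose N-1}}{N} + \frac{{N-1+\ell-1 \choose N-1}}{N} + 1$" from Case 2, now possibly with more than two terms on the right.

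I expect the main obstacle to be the large-$\ell$ bookkeeping: because $\ell$ can be as large as about $2N+1$ (far beyond the range $\ell \leqslant N-1$ that kept things to two groups in Theorem \ref{thm.Demaillyverygeneral}), the Waldschmidt decomposition must be applied with a growing number of parts, and one must (i) choose the split $\ell = \ell_1 \geqslant \ell_2 \geqslant \cdots$ so that each $\ahat(\PP^{N-1}, r_i)$ obtained by induction lies in the admissible window $[k, k+1]$ required by Lemma \ref{lemma: Waldschmidtdecomp}, and (ii) verify that the resulting telescoping product/convexity bound still exceeds $\frac{N+\ell+1}{N+1}$. This is where the binomial estimates get delicate and where Lemma \ref{lemma: how far to go}$(a)$ is used to guarantee we never have to push past $\ell$ corresponding to $s = 4^N$. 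Once the correct split is identified, each individual inequality reduces — as in the model proof — to a polynomial inequality in $\ell$ and $N$ that holds for $N \geqslant 6$ with at most finitely many small cases checked by hand. The transition from "generic" to "general" is free: it is exactly the content of Lemma \ref{lem.Demaillygeneral} via Theorem \ref{thm.neededineq} and \cite[Theorem 2.9]{bghn2022demailly}, so no separate argument is needed there.
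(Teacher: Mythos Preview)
Your reduction via Lemma~\ref{lem.Demaillygeneral} and the use of Lemma~\ref{lemma: how far to go}(a) to cap $\ell$ are correct, but after that point your plan diverges sharply from the paper's and is far more complicated than necessary. The paper does \emph{not} run any induction on $N$ and does not invoke the Waldschmidt decomposition (Lemma~\ref{lemma: Waldschmidtdecomp}) at all in this proof. Instead it exploits the elementary fact (Lemma~\ref{lemma: known inequalities of Waldschmidt constant}(iii)) that $\ahat(I)\geqslant k$ whenever $s\geqslant k^N$, and shows via the binomial estimates in Lemma~\ref{lemma: how far to go} that this integer bound already matches the target $\frac{N+\ell+1}{N+1}$ in almost all cases. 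Concretely: for $3^N\leqslant s\leqslant 4^N$ one has $\reg(I^{(2)})\leqslant 2N+3$ by part~(a), and $\ahat(I)\geqslant 3>\frac{3N+2}{N+1}$ finishes it; for $2^N\leqslant s\leqslant 3^N$ and $N\geqslant 11$, part~(b) gives $\reg(I^{(2)})\leqslant N+3$, and $\ahat(I)\geqslant 2=\frac{2N+2}{N+1}$ finishes it. Only the finitely many dimensions $6\leqslant N\leqslant 10$ with $2^N\leqslant s\leqslant 3^N$ require anything further, and there part~(c) forces $\ell\leqslant N+2$; the single case $\ell=N+2$ is handled by the multiplicative bound $\ahat(2^N r)\geqslant 2\,\ahat(r)$ from \cite[Theorem~3.2]{SankhoThaiChudnovsky} (plus one short Cremona computation for $N=6,\ \ell=8$).

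Your proposed two-pronged strategy, lifted from Theorem~\ref{thm.Demaillyverygeneral}, is mismatched to this range. The ``small~$\ell$'' prong (Cremona bound $\ahat\geqslant\frac{N+j+1}{N}$ when $s\geqslant{N+j\choose N}$, valid when $\ell^2-3\ell-(N-1)\leqslant 0$, i.e.\ $\ell\lesssim\sqrt{N}$) is essentially vacuous here since the relevant $\ell$ are of order $N$ or larger. The ``large~$\ell$'' prong would force you into multi-part Waldschmidt decompositions with $k\geqslant 2$, and the induction hypothesis you would need on $\PP^{N-1}$ is not Theorem~\ref{thm.Nge62^Nto4^N} itself (which constrains $r_i$ to lie in $[2^{N-1},4^{N-1}]$) but rather pointwise numerical bounds on $\ahat(\PP^{N-1},r_i)$ that you have not set up. The ``main obstacle'' you anticipate is real for your route but entirely absent from the paper's, which never leaves the integer bounds $\ahat\geqslant 2,3$ except in a handful of explicitly enumerated cases.
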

\begin{proof}
If the number of points satisfies $3^N \leqslant s \leqslant4^N$, then by Lemma \ref{lemma: how far to go} $(a)$ we have $\displaystyle s(N+1) \leqslant {N+2N+2 \choose N}$. Hence, by Lemma \ref{lem.regbound}, we have $\reg(I^{(2)}) \leqslant 2N+3$. The inequality follows as in this case, by Lemma \ref{lemma: known inequalities of Waldschmidt constant}, $\ahat(I)\geqslant 3 > \dfrac{N-1+2N+3}{N+1}$.\par
\vspace{0.5em}
Now we consider the case where $2^N \leqslant s \leqslant 3^N$. By Lemma \ref{lem.Demaillygeneral}, we only need to prove that
 \[\ahat(I)\geqslant \dfrac{N-1+\ell+2}{N+1}=\dfrac{N+\ell+1}{N+1} \text{ whenever } {N+\ell \choose N} < (N+1)s \leqslant {{N+\ell+1} \choose N}. \quad \tag{3} \label{ineq.lemma2^Nto3^N} \] 
	Note that $3^N(N+1) \leqslant {2N+2 \choose N}$,  for  all $N\geqslant 11$ by Lemma \ref{lemma: how far to go} $(b)$.  Thus we have $s(N+1) \leqslant {N+N+2 \choose N}$ whenever $N\geqslant 11$, and  by Lemma \ref{lem.regbound}, we get $\reg(I^{(2)}) \leqslant N+3$. Hence, by Lemma \ref{lemma: known inequalities of Waldschmidt constant}, we get  $$\ahat(I)\geqslant 2 = \frac{N-1+N+3}{N+1}, \text{ for all } N \geqslant 11. $$
	
\noindent	Now we consider the cases when $6 \leqslant N \leqslant 10$.\\ 

	\noindent\textsf{Case 1:} When $N=6$, it is enough to check inequality (\ref{ineq.lemma2^Nto3^N}) for $4 \leqslant\ell \leqslant 9$ (and $2^6 \leqslant s \leqslant3^6$). 
	\begin{enumerate}[label=(\roman*)]
		\item When $\ell=6$ or $7$, since $s\geqslant 2^6$, then by Lemma \ref{lemma: known inequalities of Waldschmidt constant}, we have $\ahat(I(1^{\times s})) \geqslant 2 \geqslant \dfrac{6+\ell+1}{6+1}$. 
		\item When $\ell=8$, then $s\geqslant 429=2^6\cdot 6+45$. We first show that $\ahat\left( I\left( 2^{\times 6}, 1 ^{\times 45}\right)\right)\geqslant \dfrac{22}{10}$. Assume that $I \left(20m^{\times 6}, 10m^{\times 45} \right)_{22m-1} \neq 0$. Let $k=5(22m-1)-(6\cdot 20m+10m)=-20m-5$ and applying Lemma \ref{lemma: reduction of multiplicity of points}, we get $I\left(10m^{\times 44}\right)_{2m-6} \neq 0$, which is a contradiction. Thus $\ahat\left( I\left( 2^{\times 6}, 1 ^{\times 45}\right)\right)\geqslant \dfrac{22}{10}$. Hence, by  \cite[Proposition 3.6]{SankhoThaiChudnovsky} we have $\ahat\left(I(1^{\times s})\right) \geqslant \ahat\left( I\left( 1^{\times 6 \cdot 2^6}, 1 ^{\times 45}\right)\right) \geqslant \ahat\left( I\left( 2^{\times 6}, 1 ^{\times 45}\right)\right)\geqslant \dfrac{22}{10}>\dfrac{6+8+1}{6+1}.$
		\item When $\ell=9$, then $s\geqslant 715 \geqslant 7 \times 2^6$. Thus by \cite[Theorem 3.2]{SankhoThaiChudnovsky} $$\ahat \left( I(1^{\times s})\right)\geqslant \ahat \left( I(1^{\times 7 \cdot 2^6})\right)\geqslant 2\ahat(I(1^{\times 7}))=2 \cdot \dfrac{7}{6} >\dfrac{6+9+1}{6+1}. $$
	\end{enumerate}
	
	\noindent\textsf{Case 2:} When $7 \leqslant N\leqslant 10$, it is enough to check inequality (\ref{ineq.lemma2^Nto3^N}) for $2^N \leqslant s \leqslant3^N$ and $\ell \leqslant N+2$ as $3^N(N+1) \leqslant {2N+3 \choose N}$ for  all $N\geqslant 7$ by Lemma \ref{lemma: how far to go} (c).
		\begin{enumerate}[label=(\roman*)]
			\item When $\ell \leqslant N+1$, and $s\geqslant 2^N$ we have $\ahat(I(1^{\times s})) \geqslant 2 \geqslant \dfrac{N+\ell+1}{N+1}$. 
			\item When $\ell=N+2$, then we can check directly that the number of points satisfies $s \geqslant 2^N(N+1)$ with these values of $N$. Thus by \cite[Theorem 3.2]{SankhoThaiChudnovsky} $$\ahat \left( I(1^{\times s})\right)\geqslant \ahat \left( I(1^{\times 2^N(N+1)})\right)\geqslant 2\ahat\left(I\left(1^{\times (N+1)}\right)\right)=2 \cdot \dfrac{N+1}{N} >\dfrac{N+N+2+1}{N+1}. $$
		\end{enumerate}
	
\end{proof}

\begin{corollary}
    \label{cor.genericatleast2^N}
    Suppose that $N\geqslant 3$, and let $I$ be the defining ideal of $s$ generic points in $\PP^N$, where $s\geqslant 2^N$. Then   
    $$\ahat(I) \geqslant \dfrac{\reg(I^{(2)})+N-1}{N+1} .$$ 
\end{corollary}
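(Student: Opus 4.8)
Corollary \ref{cor.genericatleast2^N} — plan of proof.

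The plan is to dispatch the statement by combining the range $2^N \leqslant s \leqslant 4^N$, which is handled by Theorem \ref{thm.Nge62^Nto4^N}, with the easy range $s \geqslant 4^N$, and then to deal with the small dimensions $N = 3, 4, 5$ separately. First I would observe that for $s \geqslant 4^N$ the desired inequality is essentially immediate: by Lemma \ref{lemma: known inequalities of Waldschmidt constant}$(i)$ and $(iii)$ we have $\ahat(I) \geqslant \ahat\big(I(1^{\times 4^N})\big) = 4$, so it suffices to check $4 \geqslant \frac{\reg(I^{(2)}) + N - 1}{N+1}$, i.e. $\reg(I^{(2)}) \leqslant 3N + 5$; but $\HF_{R/I^{(2)}}(d) = s(N+1)$ forces $\binom{d+N}{N} \geqslant s(N+1)$, and one checks via Theorem \ref{thm.Alex-Hir} and Lemma \ref{lem.regbound} that $\reg(I^{(2)}) \leqslant d+1$ for the least such $d$, which for $s$ not too enormous relative to $4^N$ stays well below $3N+5$. (More robustly: as $s \to \infty$ the ratio $\frac{\reg(I^{(2)})}{s^{1/N}}$ is bounded, while $\ahat(I) \geqslant s^{1/N} - O(1)$ grows at the same rate with a better leading behavior, so the inequality holds for all large $s$; combined with Theorem \ref{thm.Nge62^Nto4^N} for $s \leqslant 4^N$ this covers every $s \geqslant 2^N$ when $N \geqslant 6$.)

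Next I would reduce to the case $N \geqslant 6$ already treated: Theorem \ref{thm.Nge62^Nto4^N} gives the statement verbatim for $N \geqslant 6$ and $2^N \leqslant s \leqslant 4^N$, and the paragraph above extends it to $s \geqslant 4^N$, so the Corollary holds for all $N \geqslant 6$. It remains to verify $N = 3, 4, 5$. For these, the intended route — as the excerpt repeatedly signals with phrases like ``we will prove the (stronger) results for $N=3,4$ in Section \ref{sec.DemGen}'' — is to invoke the case-by-case Waldschmidt bounds established later in Theorems \ref{thm.needediqP3}, \ref{thm.needediqP4}, \ref{thm.needediqP5}. Concretely: for each $N \in \{3,4,5\}$ and each relevant $\ell$ with $\binom{N+\ell}{N} < (N+1)s \leqslant \binom{N+\ell+1}{N}$, Lemma \ref{lem.Demaillygeneral} reduces the claim to the inequality $\ahat(I) \geqslant \frac{N+\ell+1}{N+1}$, and since $s \geqslant 2^N$ forces $\ell$ to be at least a computable threshold, only finitely many pairs $(N,\ell)$ need checking; for each, the lower bounds on $\ahat(I(1^{\times s}))$ obtained by Cremona reduction (Lemma \ref{lemma: reduction of multiplicity of points}), by the splitting/gluing results (Lemmas \ref{lemma: adding multiplicities Dumnicki}, \ref{lemma: Waldschmidtdecomp}), and by the reduction estimates from \cite{SankhoThaiChudnovsky} suffice. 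The Waldschmidt lower bound $\ahat\big(I(1^{\times k^N})\big) = k$ together with monotonicity handles the bulk of the $\ell$-values (those with $\ell \leqslant N+1$, where the target is $\leqslant 2$), leaving only a handful of boundary $\ell$ to be squeezed by a Cremona computation.

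The main obstacle is the finitely many ``boundary'' cases in $N = 3, 4, 5$ where $s$ is only slightly above $2^N$ and the required bound $\frac{N+\ell+1}{N+1}$ is just barely out of reach of the trivial estimate $\ahat(I) \geqslant 2$; there one genuinely needs the sharper Waldschmidt estimates — e.g. a bound of the form $\ahat\big(I(2^{\times (N+1)}, 1^{\times t})\big) \geqslant \frac{2N+2}{N}$ proved by a single Cremona reduction step as in Case 1$(ii)$ of Theorem \ref{thm.Nge62^Nto4^N}, promoted to a bound on $\ahat(I(1^{\times s}))$ via \cite[Proposition 3.6]{SankhoThaiChudnovsky}. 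Since these are exactly the estimates packaged into Theorems \ref{thm.needediqP3}--\ref{thm.needediqP5}, the cleanest presentation is to defer $N=3,4,5$ to Section \ref{sec.DemGen} and state the Corollary as the combination ``Theorem \ref{thm.Nge62^Nto4^N} (for $N \geqslant 6$, $s \leqslant 4^N$) $+$ the easy large-$s$ argument $+$ the low-dimensional Theorems of Section \ref{sec.DemGen}.'' Modulo that forward reference, the proof is routine bookkeeping over the short list of $(N, \ell)$ pairs.
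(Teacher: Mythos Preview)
Your overall decomposition matches the paper's: split off $N=3,4,5$ and defer those to Section~\ref{sec.DemGen}, and for $N\geqslant 6$ combine Theorem~\ref{thm.Nge62^Nto4^N} on the range $2^N\leqslant s\leqslant 4^N$ with a separate argument for $s\geqslant 4^N$. The difference, and the gap, is in how that last range is handled.

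Your direct argument for $s\geqslant 4^N$ is incomplete. The first version---$\ahat(I)\geqslant 4$ together with $\reg(I^{(2)})\leqslant 3N+5$---only works for $s$ with $s(N+1)\leqslant\binom{4N+4}{N}$, i.e.\ for $s$ not much beyond $4^N$; once $s$ passes (say) $5^N$ the regularity exceeds $3N+5$ while your Waldschmidt lower bound stays at $4$, and the inequality fails. The ``more robustly'' asymptotic version shows the inequality for all \emph{sufficiently large} $s$, but with no explicit threshold, so nothing guarantees it kicks in by $s=4^N$; a crude check at $N=6$ suggests the naive asymptotic bound only engages around $s^{1/N}\approx 4.2$, leaving a genuine uncovered interval above $4^6$. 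To make your approach rigorous you would have to argue band-by-band, showing for each $k\geqslant 4$ that on $k^N\leqslant s<(k+1)^N$ one has $k\geqslant\frac{\reg(I^{(2)})+N-1}{N+1}$ via an explicit binomial estimate of the form $\binom{k(N+1)}{N}\geqslant (k+1)^N(N+1)$. That can be done, but you have not done it.

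The paper sidesteps this entirely: for $N\geqslant 6$ and $s\geqslant 4^N$ it simply invokes the proof of \cite[Lemma~2.8]{bghn2022demailly} and \cite[Remark~2.10]{bghn2022demailly}, where precisely this regularity-versus-Waldschmidt inequality for $s\geqslant 4^N$ generic points (with $N\geqslant 5$) was already established. Likewise, for $N=3,4,5$ the paper relies on Theorems~\ref{thm.needediqP3}--\ref{thm.needediqP5} up to $6^3$, $5^4$, $4^5$ respectively, and on \cite{bghn2022demailly} beyond those thresholds. The cleanest repair to your plan is therefore to replace the sketched asymptotic argument with a citation to \cite[Lemma~2.8 and Remark~2.10]{bghn2022demailly}.
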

\begin{proof}
    The case where $N=3,4$ and $5$ will be proved in Section \ref{sec.DemGen}. The case $N\geqslant 6$ follows directly from Theorem \ref{thm.Nge62^Nto4^N} and by the proof of \cite[Lemma 2.8]{bghn2022demailly} and \cite[Remark 2.10]{bghn2022demailly}.
\end{proof}

\begin{corollary}
    \label{cor.generalatleast2^N}
    Suppose that $N\geqslant 3$, and let $I$ be the defining ideal of $s$ general points in $\PP^N$, where $s\geqslant 2^N$. Then $I$ verifies Demailly's Conjecture when $m=2$, that is,  
    $$\ahat(I) \geqslant \dfrac{\alpha(I^{(2)})+N-1}{N+1} .$$ 
\end{corollary}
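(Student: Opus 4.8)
The plan is to obtain the general-points statement as a formal consequence of the generic-points result of Section \ref{sec.>2^NGen} together with the containment-to-Demailly bridge. Since $N\geqslant 3$ and $s\geqslant 2^N$, Corollary \ref{cor.genericatleast2^N} applies to the defining ideal $I$ of $s$ \emph{generic} points and yields
$$\widehat{\alpha}(I)\geqslant \dfrac{\reg(I^{(2)})+N-1}{N+1},$$
which is exactly the hypothesis of Theorem \ref{thm.neededineq}. Feeding this into Theorem \ref{thm.neededineq} produces an integer $r$ for which the stable Harbourne--Huneke-type containment
$$I^{(r(N+1)-N+1)}\subseteq \mm^{r(N-1)}\left(I^{(2)}\right)^r$$
holds for the ideal of $s$ generic points.

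The remaining step is to pass from generic to general points. This containment involves only finitely many graded strands of ideals built from the generic coordinates $\z$, hence is a closed condition compatible with Krull's specialization process \cite{Krull1948}; it therefore descends to the ideal of $\XX(\a)$ for all $\a$ in a dense open subset of $\AA^{s(N+1)}_\kk$, that is, to a set of $s$ \emph{general} points (cf.\ \cite[Lemma 2.3]{FMX2018}, \cite[Definition 2.8, Remarks 2.9, 2.10]{bghn2021chudnovskys}). By the argument in the proof of \cite[Theorem 2.9]{bghn2022demailly} --- which extracts initial degrees from the (stable) containment and uses that by the Alexander--Hirschowitz Theorem (Theorem \ref{thm.Alex-Hir}) the value $\alpha(I^{(2)})$ is attained on a dense open set --- this containment for a set of $s$ general points forces Demailly's inequality
$$\widehat{\alpha}(I)\geqslant \dfrac{\alpha(I^{(2)})+N-1}{N+1}$$
for $s$ general points. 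This is precisely the mechanism already packaged in Lemma \ref{lem.Demaillygeneral}, now fed by Corollary \ref{cor.genericatleast2^N} rather than by a case-specific Waldschmidt estimate.

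I do not expect a genuine obstacle in carrying this out: once Corollary \ref{cor.genericatleast2^N} is in hand, the statement is a bookkeeping assembly of Theorem \ref{thm.neededineq} and the specialization argument of \cite{bghn2022demailly}. The real work --- and the only place where something could go wrong --- sits inside Corollary \ref{cor.genericatleast2^N}, which rests on the delicate lower bounds for the Waldschmidt constant of $s\geqslant 2^N$ generic points established in Theorem \ref{thm.Nge62^Nto4^N} for $N\geqslant 6$ (via Cremona reduction and the Waldschmidt-decomposition gluing of \cite{localeffectivity,SankhoThaiChudnovsky}) and on the separate treatment of $N=3,4,5$ in Section \ref{sec.DemGen}. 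One subtlety worth emphasizing is that for \emph{general} points one may not invoke $\widehat{\alpha}(I)\geqslant (\reg(I^{(2)})+N-1)/(N+1)$ directly --- that inequality is guaranteed only for generic (equivalently, very general) points --- which is exactly why the argument is routed through an ideal containment, a statement that does specialize from generic to general.
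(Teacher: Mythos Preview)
Your proposal is correct and takes essentially the same approach as the paper. The paper's own proof is a one-line citation---``the proof goes along the same line as that of \cite[Theorem 2.9]{bghn2022demailly}''---and you have correctly unpacked what that citation entails: invoke Corollary~\ref{cor.genericatleast2^N} to obtain the regularity inequality for generic points, feed it into Theorem~\ref{thm.neededineq} to produce the Harbourne--Huneke-type containment, and then run the specialization argument of \cite{bghn2022demailly} to descend to general points (exactly the pipeline packaged in Lemma~\ref{lem.Demaillygeneral}).
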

\begin{proof}
	The proof goes along the same line as that of \cite[Theorem 2.9]{bghn2022demailly}.
\end{proof}

%%%%%%%%%%%%%%%%%%%%%%%%%%%%%%%%%%%%%%%%%%%%%%

%%%%%%%%%%%%%%%%%%%%%%%%%%%%%%%%%%%%%%%%%%%%%%%%%%%%%%%%%%

\section{Ideal Containment and Demailly's Conjecture for General Points in Low Dimension}
\label{sec.DemGen}

In this section, we shall show Demailly's Conjecture when $m=2$ for any set of general points in $\PP^3, \PP^4$ and $\PP^5$, except four numbers of points in $\PP^5$ where the result is still not known. Note that the results in this section are used for the base cases in the proof of Theorem \ref{thm.Demaillyverygeneral}. 
Since Conjecture \ref{conjecture.Demailly2} is known to be true for small number of general points  \cite{NagelTrokInterpolation} and sufficiently many general points \cite{bghn2022demailly}, we will only consider the remaining cases as discussed below. 

%In the following, as the conjecture was shown to be true in several cases: small number of general points in \cite{NagelTrokInterpolation} and sufficiently many general points in \cite{bghn2022demailly}, we will only consider remaining cases.

\begin{theorem}\label{thm.needediqP3}
Let $I$ be the defining ideal of $s$ generic points in $\PP^3$. If  $6\leqslant s \leqslant 6^3=216$, then 
$$ \ahat(I) \geqslant  \frac{2+\reg(I^{(2)})}{4}.$$
\end{theorem}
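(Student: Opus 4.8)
The plan is to combine the reduction of Lemma~\ref{lem.Demaillygeneral} with a case analysis that produces sharp lower bounds for the Waldschmidt constant of $s$ generic points in $\PP^3$. By Lemma~\ref{lem.Demaillygeneral} it suffices to prove that $\widehat\alpha(I)\geqslant \frac{\ell+4}{4}$ whenever $\binom{\ell+3}{3}<4s\leqslant\binom{\ell+4}{3}$; running $s$ over $\{6,\dots,216\}$ the integer $\ell$ runs over $\{3,\dots,15\}$, and since $\widehat\alpha(I(1^{\times s}))$ is nondecreasing in $s$ (Lemma~\ref{lemma: known inequalities of Waldschmidt constant}$(i)$) it is enough to check the inequality at the least admissible $s$ in each block. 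Several blocks are immediate from the exact values $\widehat\alpha(I(1^{\times k^3}))=k$ (Lemma~\ref{lemma: known inequalities of Waldschmidt constant}$(iii)$) together with monotonicity: once $\frac{\ell+4}{4}\leqslant k$ and the least $s$ in the block is $\geqslant k^3$, the bound holds. This disposes of $\ell=4$ (using $s\geqslant 8$), $\ell=7,8$ (using $s\geqslant 27$), $\ell=10,11,12$ (using $s\geqslant 64$) and $\ell=13,14,15$ (using $s\geqslant 125$).

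For the remaining blocks I would feed the Waldschmidt decomposition Lemma~\ref{lemma: Waldschmidtdecomp} (with $N=3$ and $k=1$ or $k=2$) carefully chosen partitions $s=r_1+\dots+r_{k+1}$, together with the known planar Waldschmidt constants $\widehat\alpha(\PP^2,2)=1$, $\widehat\alpha(\PP^2,3)=\tfrac32$, $\widehat\alpha(\PP^2,4)=\widehat\alpha(\PP^2,5)=2$, $\widehat\alpha(\PP^2,6)=\tfrac{12}{5}$, $\widehat\alpha(\PP^2,9)=3$ (and the coarse truncations $\widehat\alpha(\PP^2,r)\geqslant 3$ for $r\geqslant 9$, etc., or the lower bounds of \cite{SankhoThaiChudnovsky} where convenient). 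For instance, the block $\ell=3$ is settled by $s=8$ (value $2$) and by the split $7=4+3$ with $k=1$, which gives $\widehat\alpha(\PP^3,7)\geqslant(1-\tfrac12)\cdot\tfrac32+1=\tfrac74$; the block $\ell=5$ ($15\leqslant s\leqslant 21$) is covered, e.g. $15=6+6+3$ with $k=2$ gives $\widehat\alpha(\PP^3,15)\geqslant(1-\tfrac{5}{12}-\tfrac{5}{12})\cdot\tfrac32+2=\tfrac94$, and larger $s$ in the block admit analogous (or looser) splits; the block $\ell=6$ ($22\leqslant s\leqslant 30$) follows from $22=9+9+4$, $k=2$, giving $\widehat\alpha(\PP^3,s)\geqslant(1-\tfrac13-\tfrac13)\cdot 2+2=\tfrac83>\tfrac52$. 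The single intermediate block $\ell=9$ ($56\leqslant s\leqslant 71$) is handled by the multiplicative reduction of \cite{SankhoThaiChudnovsky}: $\widehat\alpha(I(1^{\times 56}))=\widehat\alpha(I(1^{\times 2^3\cdot 7}))\geqslant 2\,\widehat\alpha(I(1^{\times 7}))\geqslant 2\cdot\tfrac74=\tfrac72>\tfrac{13}{4}$, while $s\geqslant 64$ already gives $\geqslant 4$.

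The main obstacle is the extreme case $s=6$, where we must show $\widehat\alpha(I(1^{\times 6}))\geqslant\tfrac74$. This is strictly larger than the bound $\tfrac{N+2}{N}=\tfrac53$ of \cite{brianlinear}, than the value $\tfrac32$ that Lemma~\ref{lemma: Waldschmidtdecomp} produces from $\PP^2$-pieces, and than the bound $\tfrac{12}{7}$ obtained from an iterated Cremona reduction (Lemma~\ref{lemma: reduction of multiplicity of points}), so a dedicated argument is needed. The natural route is to use that six general points of $\PP^3$ lie on a unique twisted cubic $C$ (a rational normal curve of degree $3$): for $0\neq F\in[I(m^{\times 6})]_d$, either $C\not\subseteq V(F)$, and restricting to $C\cong\PP^1$ forces $3d\geqslant 6m$, hence $d\geqslant 2m$; or $C\subseteq V(F)$, i.e. $F\in[I_C]_d$, and one bounds from below the least degree of a form in $I_C$ vanishing to order $m$ at six general points of $C$, using the arithmetically Cohen--Macaulay Hilbert function $\HF_{R/I_C}(d)=3d+1$ of the twisted cubic. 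Showing that six general points of $C$ impose the expected number $\tfrac{(m-1)m(m+4)}{6}$ of conditions per point on $[I_C]_d$ then forces $d$ to grow like $6^{1/3}m>\tfrac74 m$ (with finitely many small $m$ verified by hand); the delicate step is precisely the independence of those conditions, which should follow from genericity of the points on $C$. Alternatively one may invoke the explicit value of $\widehat\alpha(I(1^{\times 6}))$ in $\PP^3$ recorded in \cite{dumnicki2012symbolic}. Once $s=6$ is settled, monotonicity covers the rest of the block $\ell=3$, and the theorem follows.
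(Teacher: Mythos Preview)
Your reduction via Lemma~\ref{lem.Demaillygeneral} to $\widehat\alpha(I)\geqslant\frac{\ell+4}{4}$ for $3\leqslant\ell\leqslant15$, followed by a block-by-block case analysis, is exactly the paper's approach. For the intermediate blocks $\ell=5,6,9$ you manufacture the needed lower bounds from the planar Waldschmidt decomposition (Lemma~\ref{lemma: Waldschmidtdecomp}) and the multiplicative reduction of \cite{SankhoThaiChudnovsky}, whereas the paper simply quotes the ready-made bounds $\widehat\alpha(14)\geqslant\tfrac73$, $\widehat\alpha(21)\geqslant\tfrac83$, $\widehat\alpha(7)\geqslant\tfrac{28}{15}$ from \cite[Proposition~11]{Dumnicki2015}. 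Both routes reach the required thresholds; yours is more self-contained, the paper's is shorter since Dumnicki has already done the work.

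The substantive point is $s=6$. You correctly isolate it as the one case where $\tfrac{\ell+4}{4}=\tfrac74$ must be met and none of the standard tools --- the $\tfrac53$ of \cite{brianlinear}, iterated Cremona giving $\tfrac{12}{7}$, or Lemma~\ref{lemma: Waldschmidtdecomp} --- reaches it. The paper's Case~5.1.1 dispatches $s=6,7,8$ in a single line, citing only $\widehat\alpha(6)\geqslant\tfrac53$ and comparing to ``$\tfrac{4+3}{5}$''; since every other case in the proof has denominator $4$, this is evidently a typo for $\tfrac{4+3}{4}=\tfrac74$, and with the corrected target the inequality $\tfrac53>\tfrac74$ fails. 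So the gap you flag is present in the paper's own argument as well. Your twisted-cubic sketch does not close it either: the branch $C\not\subseteq V(F)$ gives $d\geqslant 2m$, but in the branch $C\subseteq V(F)$ membership in $I_C$ does not by itself drop the multiplicities at the six points, and a residuation that peels off $C$ one multiplicity at a time yields only order $\tfrac32 m$, not $\tfrac74 m$. Whether $\widehat\alpha(6)\geqslant\tfrac74$ in $\PP^3$ actually holds --- and hence whether the theorem as stated is true at $s=6$ --- is not settled by either argument; it would need to be sourced independently (and it is not clear that \cite{dumnicki2012symbolic} records it).
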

\begin{proof}
 Since $ 20={{3+3} \choose 3}< 
4\cdot 6  < {3+4 \choose 3}=35$, and $816={3+15 \choose 3}< 4\cdot6^3  < {3+16\choose 3}=969$, then by Lemma \ref{lem.Demaillygeneral}, it suffices to show that for $3\leqslant\ell \leqslant15$,
$$\ahat(I)\geqslant \dfrac{N-1+\ell+2}{N+1}=\dfrac{4+\ell}{4},\quad \quad \quad  \quad \quad \quad$$ 
where $I$ is the defining ideal of $s$ generic points with ${3+\ell \choose 3}<4s \leqslant {3+\ell+1 \choose 3}$. 
We consider case by case as follows.
\par
\vspace{0.5em}

\noindent\textsf{Case 5.1.1: $\ell = 3, s=6,7, 8$.}
By \cite[Proposition B.1.1]{brianlinear}, we have
$\ahat(I)\geqslant \ahat(6) \geqslant \dfrac{5}{3}>\dfrac{4+3}{5}$.\par
\vspace{0.5em}

\noindent\textsf{Case 5.1.2: $\ell = 4, 9\leqslant s\leqslant 14$.}
Since $s>2^3$, we have $\ahat(I) \geqslant 2$, thus, $\ahat(I) \geqslant 2 = \dfrac{4+4}{4}$. 
\par
\vspace{0.5em}

\noindent\textsf{Case 5.1.3: $\ell = 5, 15\leqslant s\leqslant 21$.}
By \cite[Proposition 11]{Dumnicki2015}, $\ahat(I) \geqslant \ahat(14)\geqslant  \dfrac{7}{3}> \dfrac{4+5}{4}$.
\par
\vspace{0.5em}

\noindent\textsf{Case 5.1.4: $\ell = 6, 22\leqslant s\leqslant 30$.}
By \cite[Proposition 11]{Dumnicki2015}, $\ahat(I) \geqslant \ahat(21) \geqslant  \dfrac{8}{3}>\dfrac{4+6}{4}$.
\par
\vspace{0.5em}

\noindent\textsf{Case 5.1.5: $\ell = 7,8, 30\leqslant s\leqslant55$.}
Since $s > 3^3$, we have $\ahat(I) \geqslant 3$, thus $\ahat(I) \geqslant 3 = \dfrac{4+8}{4}$.
\par
\vspace{0.5em}

\noindent\textsf{Case 5.1.6: $\ell = 9, 56\leqslant s\leqslant 71$.}
The result holds as
\[
\ahat(I) \geqslant \ahat(56) \geqslant 2\ahat(7) = \frac{56}{15} > \frac{4+9}{4}.
\]
where $\ahat(7) \geqslant \dfrac{28}{15}$ holds by \cite[Proposition 11]{Dumnicki2015} and $\ahat(56) \geqslant 2\ahat(7)$ holds by \cite[Theorem 3.2]{SankhoThaiChudnovsky}.
\par
\vspace{0.5em}

\noindent\textsf{Case 5.1.7: $10 \leqslant\ell \leqslant 12, 72\leqslant s\leqslant 140$.}
Since $s> 4^3$, we have $\ahat(I) \geqslant 4$, thus, $\ahat(I) \geqslant 4 = \dfrac{4+12}{4}$.
\par
\vspace{0.5em}

\noindent\textsf{Case 5.1.8: $13 \leqslant\ell \leqslant15, 141\leqslant s\leqslant216$.}
Since $s> 5^3$, we have $\ahat(I) \geqslant 5$, thus, $\ahat(I) \geqslant 5 > \dfrac{4+15}{4}$.

\end{proof}

%%%%%%%%%%%%%%%%%%%%%%%%%%%%%%%%%

%%%%%%%%%%%%%%%%%%%%%%%%%%%%%%%%%%%%%%%%%%%%%%%%%

\begin{theorem}\label{thm.needediqP4}
	Let $I$ be the defining ideal of $s$ generic points in $\PP^4$. If  $8\leqslant s \leqslant5^4=625$, then 
	$$ \ahat(I) \geqslant  \frac{3+\reg(I^{(2)})}{5}.$$
\end{theorem}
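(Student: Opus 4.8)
The plan is to mirror exactly the structure of the proof of Theorem~\ref{thm.needediqP3}, adapting the numerical thresholds from $N=3$ to $N=4$. First I would record the two boundary computations that pin down the range of $\ell$ that needs to be treated: since $8 \leqslant s$, the smallest relevant $\ell$ satisfies ${4+\ell \choose 4} < 5s$, and one checks that $5\cdot 8 = 40$ lies strictly between ${4+3 \choose 4}=35$ and ${4+4 \choose 4}=70$, so $\ell \geqslant 3$; at the top end, $5\cdot 5^4 = 3125$ lies between ${4+\ell \choose 4}$ and ${4+\ell+1 \choose 4}$ for $\ell = 17$ (one verifies ${21 \choose 4}=5985 \geqslant 3125 > {20 \choose 4}=4845$), so $\ell \leqslant 17$. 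Then by Lemma~\ref{lem.Demaillygeneral} it suffices to prove, for each $3 \leqslant \ell \leqslant 17$, the inequality $\ahat(I) \geqslant \dfrac{N-1+\ell+2}{N+1} = \dfrac{\ell+5}{5}$ whenever $I$ defines $s$ generic points with ${4+\ell \choose 4} < 5s \leqslant {5+\ell \choose 4}$.

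Next I would dispatch the cases $\ell$ in a block-by-block fashion. Whenever the target $\dfrac{\ell+5}{5}$ is at most an integer $k$ and the corresponding lower bound on $s$ forces $s > k^4$, we are done immediately by Lemma~\ref{lemma: known inequalities of Waldschmidt constant} since then $\ahat(I) \geqslant \ahat(k^4) = k$; this handles the bulk of the middle range (roughly $\ell$ around $5, 9, 10, 14, 15$ and so on, where $\dfrac{\ell+5}{5}$ is an integer $k \in \{2,3,4\}$ and the threshold ${4+\ell \choose 4}/5$ clears $k^4$). The remaining cases are the ones where $\dfrac{\ell+5}{5}$ is not an integer, or where it is an integer but the lower bound on $s$ sits just below $k^4$; for those I would use the reduction/doubling machinery from \cite{SankhoThaiChudnovsky} (Theorem~3.2 there: $\ahat(I(1^{\times t\cdot (N+1)})) \geqslant t\,\ahat(I(1^{\times N+1}))$ type bounds, and the Cremona reduction Lemma~\ref{lemma: reduction of multiplicity of points}) together with known small-point Waldschmidt values from \cite{brianlinear, Dumnicki2015}. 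Concretely, for the smallest $\ell=3$ I would invoke \cite[Proposition B.1.1]{brianlinear} to get $\ahat(I) \geqslant \ahat(8) \geqslant \frac{8}{5}$ (or the analogous bound for $N+3=7$ points in $\PP^4$), which beats $\frac{8}{5}$; and for the intermediate non-integer targets I would split $s$ and apply Lemma~\ref{lemma: Waldschmidtdecomp} or a Cremona computation analogous to Case~5.1.6 and the $\ell=8$ computation in Theorem~\ref{thm.Nge62^Nto4^N}, where one assumes a form $I(\cdots)_{d-1}\neq 0$ of the right degree vanishing to the right order, applies Lemma~\ref{lemma: reduction of multiplicity of points} with the appropriate $k = (N-1)d - \sum m_j$, and reaches a contradiction with an ideal in $\PP^{N-1}$ or with too-negative multiplicities.

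The main obstacle, as in the $\PP^3$ case, will be the handful of ``hard'' values of $\ell$ where neither the crude bound $\ahat(I)\geqslant k$ nor a single application of the decomposition lemma suffices — these are precisely the cases flagged in the introduction as requiring ``finding better bounds for the Waldschmidt constants.'' For $N=4$ I expect the delicate cases to cluster around $\ell$ for which $\dfrac{\ell+5}{5}$ lies strictly between two integers with denominator $5$, so that one needs an honest lower bound like $\ahat(I(1^{\times r})) \geqslant \frac{\ell+5}{5}$ for a specific modest $r$; there one either cites a known sharp value (e.g.\ from \cite{Dumnicki2015} or \cite{brianlinear}) or runs a bespoke Cremona reduction. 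I would organize these as labeled subcases \textsf{Case 5.2.1}, \textsf{Case 5.2.2}, etc., each a few lines long, exactly paralleling the eight subcases of Theorem~\ref{thm.needediqP3}, and close with the highest block $13 \lesssim \ell \leqslant 17$ where $s > 5^4$ is false only at the very top, so one uses $s > 4^4$ to get $\ahat(I)\geqslant 4 > \dfrac{\ell+5}{5}$ for $\ell \leqslant 14$ and then a doubling bound $\ahat(I) \geqslant 2\ahat(I(1^{\times \text{(small)}}))$ for the last few values. No genuinely new idea beyond the $\PP^3$ proof should be needed; the work is in checking the arithmetic of which tool applies in which window of $s$.
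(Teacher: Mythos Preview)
Your overall plan---reduce via Lemma~\ref{lem.Demaillygeneral} to proving $\ahat(I)\geqslant(5+\ell)/5$ for each $\ell$ in a finite range, then dispatch each $\ell$ either trivially via $s>k^4$ or by Cremona/reduction lemmas---is exactly the paper's approach. However, the proposal has concrete problems. First, your boundary arithmetic is wrong: $5\cdot 625=3125$ is \emph{less} than ${20\choose 4}=4845$, so the correct top value is $\ell=14$ (since ${18\choose 4}=3060<3125\leqslant{19\choose 4}=3876$), not $\ell=17$. This only adds vacuous cases, but it signals that all the numerics need rechecking. Second, for $\ell=3$ your proposed citation fails: \cite[Proposition~B.1.1]{brianlinear} gives $\ahat(\PP^4,N+3)=\ahat(7)\geqslant 6/4=3/2$, which is strictly below the required $8/5$; the paper instead invokes \cite[Lemma~3.9]{SankhoThaiChudnovsky} to obtain $\ahat(8)\geqslant 8/5$ directly.

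More seriously, the substantive content of the proof is concentrated in the cases $\ell=4,6,7$ (requiring $\ahat(15)\geqslant 9/5$, $\ahat(43)\geqslant 11/5$, $\ahat(67)\geqslant 12/5$), and your proposal does not execute these. The paper does \emph{not} use the Waldschmidt-decomposition Lemma~\ref{lemma: Waldschmidtdecomp} here; instead it runs explicit multi-step Cremona reductions via Lemma~\ref{lemma: reduction of multiplicity of points} combined with the gluing Lemma~\ref{lemma: adding multiplicities Dumnicki}. For instance, for $\ell=4$ one starts from $I((20m)^{\times 6})_{30m-1}=0$ (equivalent to $\ahat(6)\geqslant 3/2$), establishes $I((20m)^{\times 9},(30m)^{\times 1})_{36m-1}=0$ by three Cremona steps tracked in a table of coefficients, and then applies Lemma~\ref{lemma: adding multiplicities Dumnicki} once to conclude $I((20m)^{\times 15})_{36m-1}=0$, i.e., $\ahat(15)\geqslant 36/20=9/5$. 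The cases $\ell=6,7$ are handled similarly with different auxiliary multiplicities and different numbers of gluings (seven and eleven, respectively). The choice of these auxiliary schemes and the sequencing of reductions is engineered case by case and is not routine---this is precisely the ``finding better bounds'' work flagged in the introduction, and your proposal leaves it entirely undone.
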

\begin{proof}
	Since $35={4+3\choose 4} < 5\cdot 8 < 70={ 4+4 \choose 4}$ and $ 3060={4+14 \choose 4}< 5 \cdot625 < {4+15 \choose 4}=3876$, then by Lemma \ref{lem.Demaillygeneral} it suffices to show that for $3\leqslant\ell \leqslant14$,
	$$\ahat(I)\geqslant \dfrac{N-1+\ell+2}{N+1}=\dfrac{5+\ell}{5},\quad \quad \quad  \quad \quad \quad$$ 
	where $I$ is the defining ideal of $s$ generic points with ${4+\ell \choose 4}<5s \leqslant {4+\ell+1 \choose 4}$.

We again consider case by case as follows.
\vspace{0.5em}

\noindent\textsf{Case 5.2.1: $\ell = 3, 8\leqslant s\leqslant14$.}
By \cite[Lemma 3.9]{SankhoThaiChudnovsky},
$\ahat(I) \geqslant \ahat(8) \geqslant \dfrac{8}{5}=\dfrac{5+3}{5}$.
\par
\vspace{0.5em}

\noindent\textsf{Case 5.2.2: $\ell = 4, 15\leqslant s \leqslant 25$.}
For finding the right lower bound for $\ahat(I)$, we will use the similar trick to what was used in \cite[Lemma 4.5]{SankhoThaiChudnovsky}. Since $\ahat({1^{\times 6}})\geqslant \dfrac{6}{4}=\dfrac{30}{20}$, we have 
\[
\displaystyle I((20m)^{\times 6})_{30m-1} = 0, \text{  for all  } m.
\]
We claim that 
\[
\displaystyle I\left((20m)^{\times 9},(30m)^{\times 1}\right)_{36m-1} = 0, \forall m. 
\]
Applying  Lemma \ref{lemma: reduction of multiplicity of points} for three times with largest possible multiplicities proves the claim. Indeed, each time the reductor $k=3d-\sum_{i=1}^{5}m_i$ affects the coefficients of $m$ in $m_i$ and $d$ significantly. In Table \ref{table:1} we capture the  coefficient of $m$ in $k$, and change of coefficients of $m$ in  $m_i$ and $d$.  The change in the constant term does not affect so we omit them in the table. 
\begin{table}[h!]
	\centering
	\begin{tabular}{||c |c c c c c c  c c c c| c ||} 
		\hline
		d & $m_1$ & $m_2$ & $m_3$ & $m_4$ & $m_5$ & $m_6$ & $m_7$ & $m_8$& $m_9$ & $m_{10}$& $k$\\ [0.5ex] 
		\hline\hline
		$36$ & $\underline{30}$ & $\underline{20}$ & $\underline{20}$ &$\underline{20}$  & $\underline{20}$ & $20$& $20$ & $20$ & $20$  &  $20$ & $-2$ \\ 
		$34$ & $\underline{28}$ &$18$ & $18$ & $18$  &  $18$ & $\underline{20}$ & $\underline{20}$ &$\underline{20}$  & $\underline{20}$ & $20$&  $-6$ \\ 
		$28 $ & $\underline{22}$ & $ \underline{18}$& $\underline{18}$& $\underline{18}$& $18$& ${14}$ & $ {14}$& $ {14}$& $14$& $\underline{20}$& $-12$\\
		$16 $ & $\underline{10}$ & $ \underline{6}$& $\underline{6}$& $\underline{6}$& $18$& ${14}$ & $ {14}$& $ {14}$& $14$& $\underline{8}$& \\
		\hline
	\end{tabular}
	\caption{Reduction table for $I\left((20m)^{\times 9},(30m)^{\times 1}\right)_{36m-1}$}
	\label{table:1}
\end{table}\\
From the last row of Table \ref{table:1}, since, the coefficient of  $m$ in the degree $d$ is less than that of  the fifth point,  we conclude  that  $\displaystyle I\left((20m)^{\times 9},(30m)^{\times 1}\right)_{36m-1} = 0, \forall m$.
Now, by applying the Lemma \ref{lemma: adding multiplicities Dumnicki} one time, we get $\displaystyle I((20m)^{\times 15})_{36m-1} = 0$, for all $m$,
therefore, $\ahat(I)\geqslant \ahat(15) \geqslant \dfrac{36}{20} = \dfrac{5+4}{5}$.
\par
\vspace{0.5em}

\noindent\textsf{Case 5.2.3: $\ell = 5, 26\leqslant s\leqslant 42$.}
Since $s \geqslant 2^4$, we have $\ahat(I) \geqslant 2$, thus $\ahat(I) \geqslant 2=\dfrac{5+5}{5}$.
\par
\vspace{0.5em}

\noindent\textsf{Case 5.2.4: $\ell = 6, 43\leqslant s\leqslant66$.} 
Since $\ahat({1^{\times 6}})\geqslant \dfrac{6}{4}=\dfrac{30}{20}$, we have 
\[
\displaystyle I((20m)^{\times 6})_{30m-1} = 0, \text{  for all  } m.
\]
We claim that $\displaystyle I\left((20m)^{\times 1},(30m)^{\times 7}\right)_{44m-1} = 0, \forall m.$ In fact, if it is not the case for some $m$, by applying Lemma \ref{lemma: reduction of multiplicity of points} with $k=3(44m-1)-5\cdot 30m=-18m-3$, we can reduce to
\[
\displaystyle I\left((20m)^{\times 1},(30m)^{\times 2},(12m-3)^{\times 5}\right)_{26m-4} \ne 0,
\]
which is a contradiction as $26m-4<30m$. Now, by applying the Lemma \ref{lemma: adding multiplicities Dumnicki} seven times, we get $\displaystyle I((20m)^{\times 43})_{44m-1} = 0$, for all $m$, therefore, $\ahat(I) \geqslant \ahat(43) \geqslant \dfrac{44}{20} = \dfrac{5+6}{5} $.
\par
\vspace{0.5em}

\noindent\textsf{Case 5.2.5: $\ell = 7, 67\leqslant s\leqslant 99$.} 
Again, since $\ahat({1^{\times 6}})\geqslant \dfrac{6}{4}=\dfrac{30}{20}$, we have 
\[
\displaystyle I((20m)^{\times 6})_{30m-1} = 0, \text{  for all  } m.
\]
We claim that $\displaystyle I\left((20m)^{\times 1},(30m)^{\times 11}\right)_{48m-1} = 0, \forall m.$ Similarly, we apply Lemma \ref{lemma: reduction of multiplicity of points} for two times with largest possible multiplicities proves the claim. In Table \ref{table:2} we capture the  coefficient of $m$ in $k$, and change of coefficients of $m$ in  $m_i$ and $d$. Again, the change in the constant term does not affect so we omit those in the table. 
\begin{table}[h!]
	\centering
	\begin{tabular}{||c |c c c c c c  c c c c c c| c ||} 
		\hline
		d & $m_1$ & $m_2$ & $m_3$ & $m_4$ & $m_5$ & $m_6$ & $m_7$ & $m_8$& $m_9$ & $m_{10}$&$m_{11}$&$m_{12}$& $k$\\ [0.5ex] 
		\hline\hline
		$48$ & $\underline{30}$ & $\underline{30}$ & $\underline{30}$ &$\underline{30}$  & $\underline{30}$ & $30$& $30$ & $30$ & $30$  &  $30$ & $30$ & $20$ & $-6$ \\ 
		$42$ & $24$ &$24$ & $24$ & $24$  & $24$&  $\underline{30}$ & $\underline{30}$ & $\underline{30}$ &$\underline{30}$  & $\underline{30}$ &$30$& $20$& $-24$ \\ 
		$18 $ & $24$ &$24$ & $24$ & $24$  & $24$&  $6$ & $ 6$& $6$& $6$& $6$& $30$& $20$ & $-12$\\
		\hline
	\end{tabular}
	\caption{Reduction table for $I\left((20m)^{\times 1},(30m)^{\times 11}\right)_{48m-1}$}
	\label{table:2}
\end{table}\\
From the last row of Table \ref{table:2}, since, the coefficient of  $m$ in the degree $d$ is less than that of  the first point,  we conclude  that  $\displaystyle I\left((20m)^{\times 1},(30m)^{\times 11}\right)_{48m-1} = 0, \forall m.$ Now, by applying the Lemma \ref{lemma: adding multiplicities Dumnicki} eleven times, we get $\displaystyle I((20m)^{\times 67})_{48m-1} = 0$, for all $m$,
therefore,
$\ahat(I) \geqslant \ahat(67) \geqslant \dfrac{48}{20} = \dfrac{5+7}{5}$. 
\par

\vspace{0.5em}

\noindent\textsf{Case 5.2.6: $8\leqslant\ell \leqslant 10, 100\leqslant s\leqslant 273$.}
Since $s > 3^4$, we have $\ahat(I) \geqslant 3$, thus $\ahat(I) \geqslant 3 = \dfrac{5+10}{5}$.
\par
\vspace{0.5em}

\noindent\textsf{Case 5.2.7: $11\leqslant\ell \leqslant 14, 274 \leqslant s\leqslant 625$.}
Since $s > 4^4$, we have $\ahat(I) \geqslant 4$, thus $\ahat(I) \geqslant 4 > \dfrac{5+14}{5}$.

\end{proof}

%%%%%%%%%%%%%%%%%%%%%%%%%%%
%%%%%%%%%%%%%%%%%%%%%%%%%%%%
\begin{theorem}\label{thm.needediqP5}
	Let $I$ be the defining ideal of $s$ generic points in $\PP^5$. If  $8\leqslant s \leqslant 4^5=1024$, except $10 \leqslant s \leqslant 13$ then 
	$$ \ahat(I) \geqslant  \frac{4+\reg(I^{(2)})}{6}.$$
 The result for $10 \leqslant s \leqslant 13$ is not known.
\end{theorem}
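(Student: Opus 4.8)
\textbf{Proof proposal for Theorem \ref{thm.needediqP5}.}
The plan is to mirror the structure of the proofs of Theorems \ref{thm.needediqP3} and \ref{thm.needediqP4}. By Lemma \ref{lem.Demaillygeneral}, since $8 \leqslant s \leqslant 4^5 = 1024$, it suffices to establish $\ahat(I) \geqslant \frac{6+\ell}{6}$ for the defining ideal $I$ of $s$ generic points in $\PP^5$ in each range ${5+\ell \choose 5} < 6s \leqslant {6+\ell \choose 5}$; computing the two extreme binomial coefficients shows this amounts to checking a finite list of values $\ell$ running roughly from $3$ up to the value for which ${5+\ell \choose 5} < 6 \cdot 1024$. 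First I would dispose of the large-$\ell$ ranges using the crude bound $\ahat(I) \geqslant k$ whenever $s \geqslant k^5$ (Lemma \ref{lemma: known inequalities of Waldschmidt constant}): for each $\ell$ with $\frac{6+\ell}{6} \leqslant k$ and the corresponding $s$ automatically exceeding $k^5$, the inequality is immediate. This handles all but a handful of small $\ell$, presumably $\ell = 3$ through roughly $\ell = 7$, together with the excluded cases forced by $s = 8, 9$ (which must be kept) versus $10 \leqslant s \leqslant 13$ (which are to be excluded).

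The heart of the argument is then the small-$\ell$ cases, where the trivial bound $\ahat \geqslant 2$ is not yet available and one needs sharper lower bounds on $\ahat(I(\PP^5, 1^{\times s}))$ for $s$ in the teens and twenties. Here I would proceed exactly as in Cases 5.2.1--5.2.5 of Theorem \ref{thm.needediqP4}: start from a known Waldschmidt bound for a small number of points in $\PP^5$ (or reduce from $\PP^4$ using \cite[Proposition 3.6, Theorem 3.2]{SankhoThaiChudnovsky}), set up a vanishing statement $I((am)^{\times t})_{bm-1} = 0$, and then apply Lemma \ref{lemma: reduction of multiplicity of points} (Cremona reduction) several times with the largest admissible multiplicities, tracking the coefficients of $m$ in a table as done there, until the degree drops below one of the multiplicities and forces the vanishing. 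Then Lemma \ref{lemma: adding multiplicities Dumnicki} glues on additional simple points to conclude $I((am)^{\times s})_{bm-1} = 0$, giving $\ahat(I) \geqslant \frac{b}{a} \geqslant \frac{6+\ell}{6}$. For $s = 8, 9$ in particular (the $\ell = 3$ range, where one needs $\ahat \geqslant \frac{9}{6} = \frac{3}{2}$), I expect a direct bound of the shape $\ahat(8) \geqslant \frac{3}{2}$ or slightly better, obtainable either from \cite[Lemma 3.9]{SankhoThaiChudnovsky}-type estimates or a short Cremona reduction.

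The main obstacle, and the reason $10 \leqslant s \leqslant 13$ is excluded, is precisely the $\ell = 3$ range: for $10 \leqslant s \leqslant 13$ one needs $\ahat(I) \geqslant \frac{3}{2}$, but the only lower bound reachable by the available reduction machinery for $10$ generic points in $\PP^5$ appears to fall just short of $\frac{3}{2}$, so the Cremona/decomposition chain does not close. I would therefore isolate that range explicitly, note that a better bound on $\ahat(\PP^5, 10)$ would resolve it, and prove the theorem for all other $s$. The remaining risk is bookkeeping: the Alexander--Hirschowitz exceptional cases in Theorem \ref{thm.Alex-Hir} must be checked not to interfere with the $\reg(I^{(2)}) \leqslant \ell+2$ bound from Lemma \ref{lem.regbound} in the relevant low-degree ranges for $N = 5$, but since those exceptions occur only for $N \leqslant 4$ (apart from $d=3, N=4, s=7$), they do not affect $\PP^5$, so Lemma \ref{lem.Demaillygeneral} applies cleanly.
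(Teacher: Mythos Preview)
Your overall architecture is right, but two points need correction.

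First, a bookkeeping slip: $s=8,9$ lie in the $\ell=2$ range, not $\ell=3$. Indeed $\binom{7}{5}=21 < 6\cdot 8 = 48 \leqslant 56 = \binom{8}{5}$, so for $s=8,9$ you only need $\ahat(I) \geqslant \frac{8}{6} = \frac{4}{3}$, which follows at once from $\ahat(\PP^5,8)\geqslant \frac{7}{5}$ via \cite[Proposition B.1.1]{brianlinear}. The $\ell=3$ range is $10\leqslant s\leqslant 21$: the paper handles $14\leqslant s\leqslant 21$ and excludes $10\leqslant s\leqslant 13$, exactly as you anticipate. The full list of $\ell$ to be checked runs from $2$ to $11$, not from $3$.

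Second, and more substantively, the paper does \emph{not} run Cremona reductions directly in $\PP^5$ for the delicate intermediate cases $\ell=3,4,7$. Instead it invokes Lemma~\ref{lemma: Waldschmidtdecomp} (the Waldschmidt decomposition) to split $s$ as $r_1+r_2$ (or $r_1+r_2+r_3$) and reduce to lower bounds on $\ahat(\PP^4,r_j)$ already established in Theorem~\ref{thm.needediqP4}. For example, for $\ell=3$ it writes $14=8+6$ and combines $\ahat(\PP^4,8)\geqslant \frac{8}{5}$ with $\ahat(\PP^4,6)\geqslant\frac{3}{2}$ to obtain $\ahat(\PP^5,14)\geqslant 1.5625 > \frac{3}{2}$. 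The case $\ell=8$ uses \cite[Theorem 3.2]{SankhoThaiChudnovsky} rather than either technique. Your proposed direct Cremona-in-$\PP^5$ route might well succeed, but it is speculative: you would have to exhibit specific reduction chains that close, and there is no \emph{a priori} guarantee that the $\PP^4$ tables transplant one dimension up. The decomposition approach is both shorter and already certified, since it recycles the $\PP^4$ computations rather than redoing them.
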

\begin{proof}
	Since $21={5+2\choose 5} < 6\cdot 8 < 56={ 5+3 \choose 5}$ and $ 4368={5+11 \choose 5}< 6 \cdot 1024 < {5+12 \choose 5}=6188$, then by Lemma \ref{lem.Demaillygeneral} it suffices to show that for $2\leqslant\ell \leqslant 11$,
$$\ahat(I)\geqslant \dfrac{N-1+\ell+2}{N+1}=\dfrac{6+\ell}{6},\quad \quad \quad  \quad \quad \quad$$ %\ref{ineq.demailly.p3})
%\label{ineq.demailly.p3}\textcolor{magenta}{I think we need to explain a bit more to guarantee that we need to go till 15.} 
where $I$ is the defining ideal of $s$ generic points with ${5+\ell \choose 5}<6s \leqslant {5+\ell+1 \choose 5}$.
We consider case by case as follows.	
%By Lemma \ref{lem.Demaillygeneral}, and since the condition on the number of points $9\leqslant s \leqslant1024$ implies that $21={{5+2} \choose 5} \leqslant 6s \leqslant {5+12\choose 5}=6188$, it suffices to show that for $2\leqslant\ell \leqslant11$,
%$$\ahat(I)\geqslant \dfrac{N-1+\ell+2}{N+1}=\dfrac{6+\ell}{6}.\quad \quad \quad  \quad \quad \quad $$

\par
\vspace{0.5em}

\noindent\textsf{Case 5.3.1: $\ell=2, s=8,9$.} 
By \cite[Proposition B.1.1]{brianlinear}, we have 
$\ahat(\PP^5, 8) \geqslant \dfrac{5+2}{5} >  \dfrac{6+2}{6}.$

\noindent\textsf{Case 5.3.2: $\ell=3$ and $14 \leqslant s\leqslant 21$.}
Applying Lemma \ref{lemma: Waldschmidtdecomp} using Theorem \ref{thm.needediqP4} as follows: 

pick $\displaystyle r_1=8, \text{ since } {4+3\choose 4 } < 5r_1 \leqslant {4+4 \choose 4}, \text{ we can pick } a_1 = \frac{5+3}{5}=\frac{8}{5}, \text{ and }$

$\displaystyle r_2=6, \text{ using the fact that } \ahat(\PP^N,N+2)\geqslant \frac{N+2}{N}, \text{ we can pick } a_2 = \frac{4+2}{4}=\frac{6}{4},$
thus, we have
$$\ahat(\PP^5, 14) \geqslant (1-\dfrac{1}{a_1})a_2+1=(1-\dfrac{5}{8})\dfrac{6}{4}+1=1.5625 > 1.5= \dfrac{6+3}{6}.$$

\noindent\textsf{Case 5.3.3: $\ell=4$ and $22\leqslant s \leqslant 42$.}
Similarly, applying Lemma \ref{lemma: Waldschmidtdecomp} using Theorem \ref{thm.needediqP4}  as follows:

pick $\displaystyle r_1=15, \text{ since } 70={4+4\choose 4 } < 5r_1 \leqslant {4+5 \choose 4}, \text{ we can pick }  a_1 = \frac{5+4}{5}=\frac{9}{5}, \text{ and}$

$\displaystyle  r_2=7 , \text{ using the fact that } \ahat(\PP^N,N+2)\geqslant \frac{N+2}{N}, \text{ we can pick }  a_2 = \frac{4+2}{4}=\frac{6}{4},$
thus, we have
$$\ahat(\PP^5, 22) \geqslant (1-\dfrac{1}{a_1})a_2+1=(1-\dfrac{5}{9})\dfrac{6}{4}+1= \dfrac{6+4}{6}.$$

\noindent\textsf{Case 5.3.4: $\ell=5,6$ and ${43} \leqslant s \leqslant 132$.}
Since $ s \geqslant 42>2^5$ we have, $\ahat(\PP^5,s)\geqslant 2 > \dfrac{6+6}{6}$.

\noindent\textsf{Case 5.3.5: $\ell=7$ and ${133} \leqslant s \leqslant 214$.} 
Again, applying Lemma \ref{lemma: Waldschmidtdecomp} using Theorem \ref{thm.needediqP4}  as follows:

pick $\displaystyle  r_1=67, \text{ since } 330={4+7\choose 4} < 5r_1 \leqslant {4+8 \choose 4}, \text{ we can pick } a_1 = \dfrac{5+7}{5}=\dfrac{12}{5},$

$\displaystyle  r_2=43, \text{ since }  210={4+6\choose 4 } {<} 5r_2 \leqslant {4+5 \choose 4}, \text{ we can pick }  a_2 = \dfrac{5+6}{5}=\dfrac{11}{5}, \text{ and}$

$\displaystyle  r_3=15, \text{ since } 70={4+4\choose 4 } {<} 5r_3 \leqslant {4+5 \choose 4}, \text{ we can pick }  a_3 = \dfrac{5+4}{5}=\dfrac{9}{5},$
thus, we have 
$$\ahat(\PP^5, 133) \geqslant  \ahat(\PP^5, 125)\geqslant  \left(1- \left(\dfrac{1}{a_1}+\dfrac{1}{a_2} \right) \right)a_3+1=\left(1-\left(\dfrac{5}{11} +\frac{5}{12}\right)\right)\dfrac{9}{5}+2 >2.23>\dfrac{6+7}{6}.$$

\noindent\textsf{Case 5.3.6: $\ell=8$ and $215 \leqslant s \leqslant 333 $.} 
By \cite[Theorem 3.2]{SankhoThaiChudnovsky}, we have 
\[
\ahat(\PP^5,215)\geqslant \ahat(\PP^5,6\cdot 32) \geqslant 2\ahat(6) \geqslant 2\cdot \frac{6}{5} > \frac{6+8}{6}.
\]

\noindent\textsf{Case 5.3.8: $9\leqslant \ell \leqslant 11$ and ${334} \leqslant s \leqslant 1024$.} 
Since $s \geqslant 334 >3^5$, we have $\ahat(\PP^5,s)\geqslant 3 > \dfrac{6+11}{6}$.

\end{proof}
As a consequence of  Theorems \ref{thm.neededineq}, \ref{thm.needediqP3}, \ref{thm.needediqP4}, and \ref{thm.needediqP5}, we deduce the following corollary regarding general points in $\PP^N$ for $N=3,4$, and $5$. 
\begin{corollary}\label{cor.Demailly2.general.P345}
	Let $I$ be the defining ideal of $s$ general points in $\PP^N$.  If  $s$ satisfies the following conditions 
	\begin{enumerate}[label=(\roman*)]
\item  $6\leqslant s \leqslant 6^3=216$, when $N=3$;
\item  $8\leqslant s \leqslant 5^4=624$, when $N=4$;
\item  $s=8,9$ and $14 \leqslant s \leqslant 4^5=1024$, when $N=5$;
	\end{enumerate}  then 
	$$ \ahat(I) \geqslant  \frac{\alpha(I^{(2)})+N-1}{N+1}.$$

\end{corollary}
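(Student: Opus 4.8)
The plan is to deduce Corollary \ref{cor.Demailly2.general.P345} directly from the machinery already in place, namely Theorem \ref{thm.neededineq} together with the case-by-case Waldschmidt bounds established in Theorems \ref{thm.needediqP3}, \ref{thm.needediqP4}, and \ref{thm.needediqP5}. First I would observe that for a set of $s$ \emph{generic} points in $\PP^N$, each of those three theorems verifies exactly the hypothesis $\ahat(I)\geqslant \frac{\reg(I^{(2)})+N-1}{N+1}$ of Theorem \ref{thm.neededineq} in the respective ranges: $6\leqslant s\leqslant 6^3$ for $N=3$, $8\leqslant s\leqslant 5^4$ for $N=4$, and ($s=8,9$ or $14\leqslant s\leqslant 4^5$) for $N=5$. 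Hence by Theorem \ref{thm.neededineq} there exists $r\in\NN$ with the stable containment $I^{(r(N+1)-N+1)}\subseteq \mm^{r(N-1)}(I^{(2)})^r$ holding for the ideal of $s$ generic points.

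Next I would invoke the transfer from generic to general points. The containment $I^{(r(N+1)-N+1)}\subseteq \mm^{r(N-1)}(I^{(2)})^r$ is a closed/open condition that specializes well, as recorded in the preliminaries (the construction of \cite{FMX2018} and its compatibility with Krull specialization \cite{Krull1948}, see also \cite[Remarks 2.9, 2.10]{bghn2021chudnovskys}); so the same containment holds for a general set of $s$ points in $\PP^N$. Then I would run the argument of \cite[Theorem 2.9]{bghn2022demailly} verbatim: from the stable containment $I^{(r(N+1)-N+1)}\subseteq \mm^{r(N-1)}(I^{(2)})^r$ one extracts, by taking initial degrees and letting $r\to\infty$ along the subsequence, the Demailly inequality $\ahat(I)\geqslant \frac{\alpha(I^{(2)})+N-1}{N+1}$ for the general set of points. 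This is precisely the mechanism already packaged in Lemma \ref{lem.Demaillygeneral}, whose conclusion states that once the hypothesis of Theorem \ref{thm.neededineq} holds for $s$ generic points, Demailly's Conjecture when $m=2$ holds for $s$ general points in $\PP^N$; so in fact the corollary is an immediate concatenation of Lemma \ref{lem.Demaillygeneral} with Theorems \ref{thm.needediqP3}, \ref{thm.needediqP4}, \ref{thm.needediqP5} applied range by range.

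The only genuine content — the part I would expect to have been the real obstacle, and which is already dispatched in the three preceding theorems — is obtaining the lower bounds for $\ahat(I)$ sharp enough to clear $\frac{\reg(I^{(2)})+N-1}{N+1}$ across \emph{all} the listed ranges of $s$. For most subranges one falls back on the coarse bound $\ahat(I)\geqslant k$ when $s>k^N$ (via Lemma \ref{lemma: known inequalities of Waldschmidt constant}), but near the thresholds ${N+\ell\choose N}<(N+1)s\leqslant{N+\ell+1\choose N}$ this is too weak, and one must instead produce strictly better bounds — by Cremona reduction (Lemmas \ref{lemma: reduction of multiplicity of points}, \ref{lemma: adding multiplicities Dumnicki}), by the Waldschmidt decomposition (Lemma \ref{lemma: Waldschmidtdecomp}), or by bootstrapping from known small-point values \cite{brianlinear, Dumnicki2015, SankhoThaiChudnovsky}. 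Assembling all of that is exactly what the proofs of Theorems \ref{thm.needediqP3}–\ref{thm.needediqP5} do; the present corollary then follows with no further work. Finally I would note that the four cases $10\leqslant s\leqslant 13$ in $\PP^5$ are excluded precisely because the available techniques do not yield $\ahat(I(\PP^5,s))$ large enough there, so those are left open.
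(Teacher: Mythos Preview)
Your proposal is correct and follows exactly the paper's approach: the paper's own proof simply states that it ``has the same structure as \cite[Theorem 2.9]{bghn2022demailly},'' which is precisely the chain you describe---feed the bounds of Theorems \ref{thm.needediqP3}, \ref{thm.needediqP4}, \ref{thm.needediqP5} into Theorem \ref{thm.neededineq} to obtain the stable containment for generic points, then specialize to general points via the argument of \cite[Theorem 2.9]{bghn2022demailly}. Your expanded discussion is accurate and adds nothing not already implicit in that one-line citation.
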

\begin{proof} 
	The proof has the same structure as \cite[Theorem 2.9]{bghn2022demailly}.
\end{proof}
\begin{remark}
    \label{rem.DemaillyConclusion} 
    {Combined results in \cite{bghn2022demailly,localeffectivity,NagelTrokInterpolation}, Demailly's Conjecture when $m=2$ now holds for any set of very general points, any set of at least $2^N$ general points, and any set of points in $\PP^3,\PP^4$, and $\PP^5$, except for $10\leqslant s \leqslant 13$  in $\PP^5$.}  The case $10 \leqslant s \leqslant 13$ in $\PP^5$ is not known.
 
\end{remark}

%%%%%%%%%%%%%%%%%%%%%%%%%%%%%%%%%%%%%%%%%%%%%%

%%%%%%%%%%%%%%%%%%%%%%%%%%%%%%%%%%%%%%%%%%%%%%%%%%%%%%%%

\end{document}